\theoremstyle{plain}
\newtheorem*{conj*}{Conjecture}
\newtheorem*{cor*}{Corollary}
\newtheorem{theorem}{Theorem}[section]
\newtheorem{prop}[theorem]{Proposition}
\newtheorem{proposition}[theorem]{Proposition}
\newtheorem{corollary}[theorem]{Corollary}
\newtheorem{lemma}[theorem]{Lemma}
\newtheorem{question}{Question}
\theoremstyle{definition}
\newtheorem*{def*}{Definition}
\newtheorem{remark}[theorem]{Remark}
\newtheorem{example}[theorem]{Example}
\newtheorem{definition}[theorem]{Definition}
\newcommand{\C}{\mathcal C}
\newcommand{\eps}{\varepsilon}
\renewcommand{\epsilon}{\varepsilon}
\newcommand{\Z}{\mathbb{Z}}
\newcommand{\N}{\mathbb{N}}
\newcommand{\R}{\mathbb{R}}
\newcommand{\varep}{\varepsilon}
\newcommand{\dist}{\operatorname{\textit{d}}}
\newcommand{\diam}{\operatorname{diam}}
\def \diam {\mbox{diam}}
\title[First-time sensitive homeomorphisms]{First-time sensitive homeomorphisms}
\author[Mayara Antunes and Bernardo Carvalho]{Mayara Antunes and Bernardo Carvalho}
\date{\today}
\thanks{2010 \emph{Mathematics Subject Classification}: Primary 37D10; Secondary 37B99.}
\keywords{First-time Sensitivity, local unstable sets, positive topological entropy.}
\begin{document}
\begin{abstract}
We introduce first-time sensitivity for a homeomorphism of a compact metric space, that is a condition on the first increasing times of open balls of the space. Continuum-wise expansive homeomorphisms, the shift map on the Hilbert cube, and also some partially hyperbolic diffeomorphisms
satisfy this condition. We prove the existence of local unstable continua satisfying similar properties with the local unstable continua of cw-expansive homeomorphisms, but assuming first-time sensitivity. As a consequence we prove that first-time sensitivity (with some additional technical assumptions) implies positive topological entropy.
\end{abstract}

\maketitle

\section{Introduction}

In the study of chaotic systems, the hyperbolic ones play a central role. Hyperbolicity appeared as a source of chaos \cite{Anosov}, \cite{S} and it was seen to be such a strong notion that several chaotic systems just do not satisfy it. Indeed, works of Pugh and Shub \cite{PughShub} indicate that little hyperbolicity is sufficient to obtain chaotic dynamics. The existence of unstable manifolds with hyperbolic behavior is enough for proving, for example, sensitivity to initial conditions and positive topological entropy, so partially hyperbolic diffeomorphisms are important examples of non-hyperbolic chaotic systems. 
A general idea that we explore in this work is to understand how several features of hyperbolic systems can be present on chaotic systems, or, in other words, how we can prove parts of the hyperbolic dynamics using chaotic properties. Assuming only sensitivity to initial conditions there is not much we can prove, even when the space is regular such as a closed surface, since there exist examples of sensitive surface homeomorphisms that do not satisfy several features of hyperbolic systems. Indeed, we discuss one example in Proposition \ref{notft} that is sensitive, has zero topological entropy, has only one periodic point, which is a fixed point, and has local stable (unstable) sets as segments of regular flow orbits and, hence, do not increase when iterated backward (forward).

A classical and much stronger property on separation of distinct orbits is Expansiveness. The study of expansive surface homeomorphisms goes back to works of Hiraide \cite{Hiraide1} and Lewowicz \cite{L} where a complete characterization of expansiveness was given: surface expansive homeomorphisms are exactly the pseudo-Anosov ones. An important step of the proof is that expansiveness implies that stable and unstable sets form a pair of transversal singular foliations with a finite number of singularities. Indeed, both works study in detail properties of local stable/unstable sets of expansive homeomorphisms and obtain similar properties with the hyperbolic local stable/unstable manifolds.


The idea of considering dynamical properties that are stronger than sensitivity and weaker than expansiveness, and understanding how we can obtain part of the hyperbolic dynamics for these properties is what motivates the definition of the main property we consider in this paper, the first-time sensitivity. Before defining it precisely, it is important to observe that a few generalizations of expansiveness have already been considered in the literature \cites{Art,ArCa,APV,CC,CC2,CDZ,Kato1,Kato2,LZ,Morales,PaVi}, and among these the more general one is the continuum-wise expansiveness introduced by Kato in \cite{Kato1}. It is known that cw-expansive homeomorphisms of Peano continua are sensitive \cite{Hertz} and, thus, cw-expansiveness generalizes expansiveness and is stronger than sensitivity at the same time. Moreover, cw-expansive homeomorphisms of Peano continua have local stable/unstable continua with uniform diameter on every point of the space \cite{Kato2} with properties that resemble the expansive and hyperbolic cases, and this is enough to prove positive topological entropy \cite{Kato1}. This makes cw-expansiveness an example of a dynamical property that fits the idea of this paper explained above. Now we proceed to the definition of first-time sensitivity and for that we define and explain sensitivity.

\begin{definition}
A map $f\colon X\rightarrow X$ defined in a compact metric space $(X,d)$ is \emph{sensitive} if there exists $\varepsilon>0$ such that for every $x\in X$ and every $\delta>0$ there exist $y\in X$ with $d(x,y)<\delta$ and $n\in\mathbb{N}$ satisfying $d(f^{n}(x),f^{n}(y))>\varepsilon.$ The number $\eps$ is called the \emph{sensitivity constant} of $f$. 
\end{definition}

Sensitivity means that for each initial condition there are arbitrarily close distinct initial conditions with separated future iterates. We can also explain sensitivity as follows. Denoting by $B(x,\delta)=\{y\in X; d(y,x)<\delta\}$ the ball centered at $x$ and radius $\delta$, sensitivity implies the existence of $\eps>0$ such that for every ball $B(x,\delta)$ there exists $n\in\N$ such that 
$$\diam(f^n(B(x,\delta)))>\eps,$$ 
where $\diam(A)=\sup\{d(a,b); a,b\in A\}$ denotes the diameter of $A$. Thus, sensitivity increases the diameter of non-trivial balls of the space. Now we define the first increasing time of balls of the space.

\begin{definition}[First-increasing time]
Let $f:X\rightarrow X$ be a sensitive homeomorphism, with sensitivity constant $\varep>0$, of a compact metric space $(X,d)$. Given $x\in X$ and $r>0$ let $n_1(x,r,\eps)\in\mathbb{N}$ be the first iterate of $B(x,r)$ with diameter greater than $\varep$, that is, $n_1(x,r,\eps)$ satisfies:
$$\mbox{diam}\;f^{n_1(x,r,\eps)}(B(x,r))>\varepsilon \,\,\,\,\,\, \text{and}$$ 
$$\mbox{diam}\;f^j(B(x,r))\leq\varepsilon \,\,\,\,\,\, \text{for every} \,\,\,\,\,\, j\in[0,n_1(x,r,\eps))\cap\N.$$
We call the number $n_1(x,r,\eps)$ the \emph{first increasing time} (with respect to $\varep$) of the ball $B(x,r)$.
\end{definition}
\begin{definition}[First-time sensitivity]
We say that $f$ is \emph{first-time sensitive} (or simply \emph{ft-sensitive}) if there is a sequence of functions $(r_k)_{k\in\mathbb{N}}\colon X\to\mathbb{R}^*_+$ starting on a constant function $r_1$ and decreasing monotonically to $0$, such that for each $\gamma\in(0,\eps]$ there is $m_\gamma>0$ satisfying the following inequalities:
\begin{enumerate}
 \item[(F1)] $|n_1(x,r_{k+1}(x),\gamma)-n_1(x,r_{k}(x),\gamma)|\leq m_\gamma$
    \item[(F2)] $|n_1(x,r_k(x),\gamma) - n_1(x,r_k(x),\eps)|\leq m_\gamma$
\end{enumerate} 
for every $x\in X$ and for every $k\in\N$ such that $r_k(x)\leq\gamma$.
\end{definition}

Condition (F1) means the following: if we start decreasing the radius of the ball centered at $x$ (the sequence $(r_k(x))_{k\in\N}$) and keeps checking the first increasing times of the balls $B(x,r_k(x))$ with respect to $\gamma$ (the numbers $n_1(x,r_k(x),\gamma)$), we obtain that when $r_k(x)$ changes to $r_{k+1}(x)$, the difference between the first increasing times $n_1(x,r_k(x),\gamma)$ and $n_1(x,r_{k+1}(x),\gamma)$ is bounded by the constant $m_{\gamma}$ that does not depend on $k\in\N$ or on $x\in X$ (see Figure \ref{figura:F1}). Condition (F2) means the following: if we decrease the sensitivity constant $\eps$ to $\gamma$ and check the first increasing times of the ball $B(x,r_k(x))$ with respect to $\gamma$ and $\eps$ (the numbers $n_1(x,r_k(x),\gamma)$ and $n_1(x,r_k(x),\eps)$) we obtain that their difference is bounded by the constant $m_{\gamma}$ that does not depend on $k\in\N$ nor on $x\in X$ (see Figure \ref{figura:F2}).

\begin{center}
    \begin{figure}[h]
	\centering 
	\includegraphics[width=10cm]{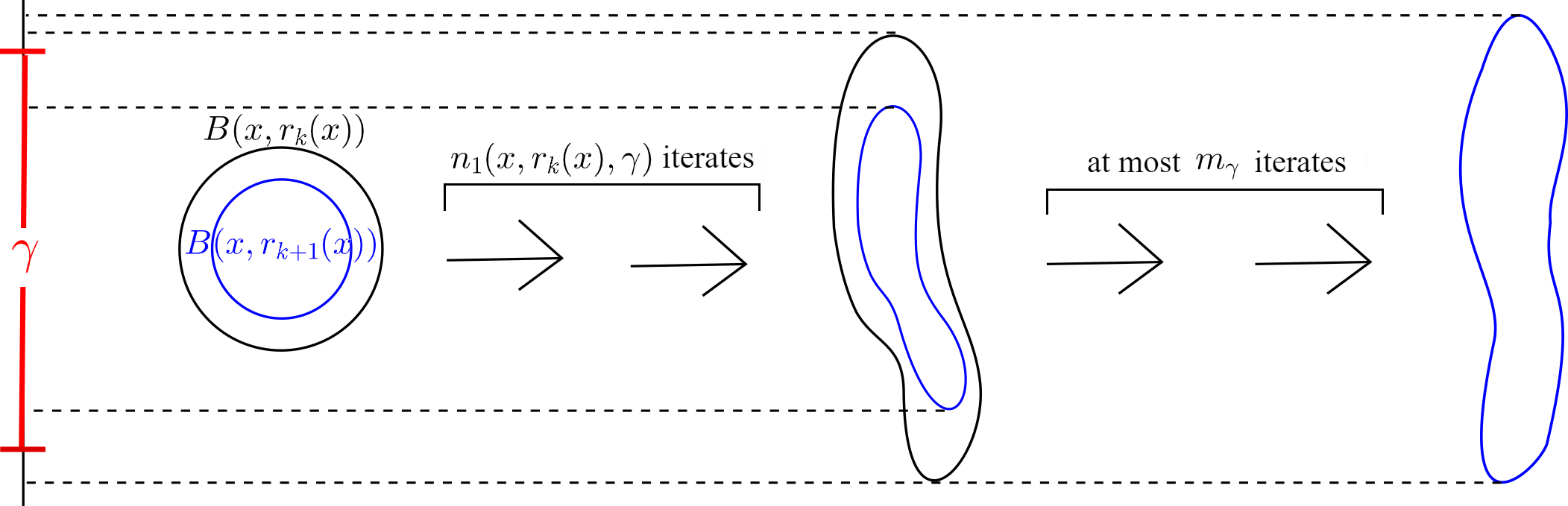} 
	\caption{Property F1}
 	\label{figura:F1}
\end{figure}
\end{center}

\begin{figure}[h]
	\centering 
	\includegraphics[width=10cm]{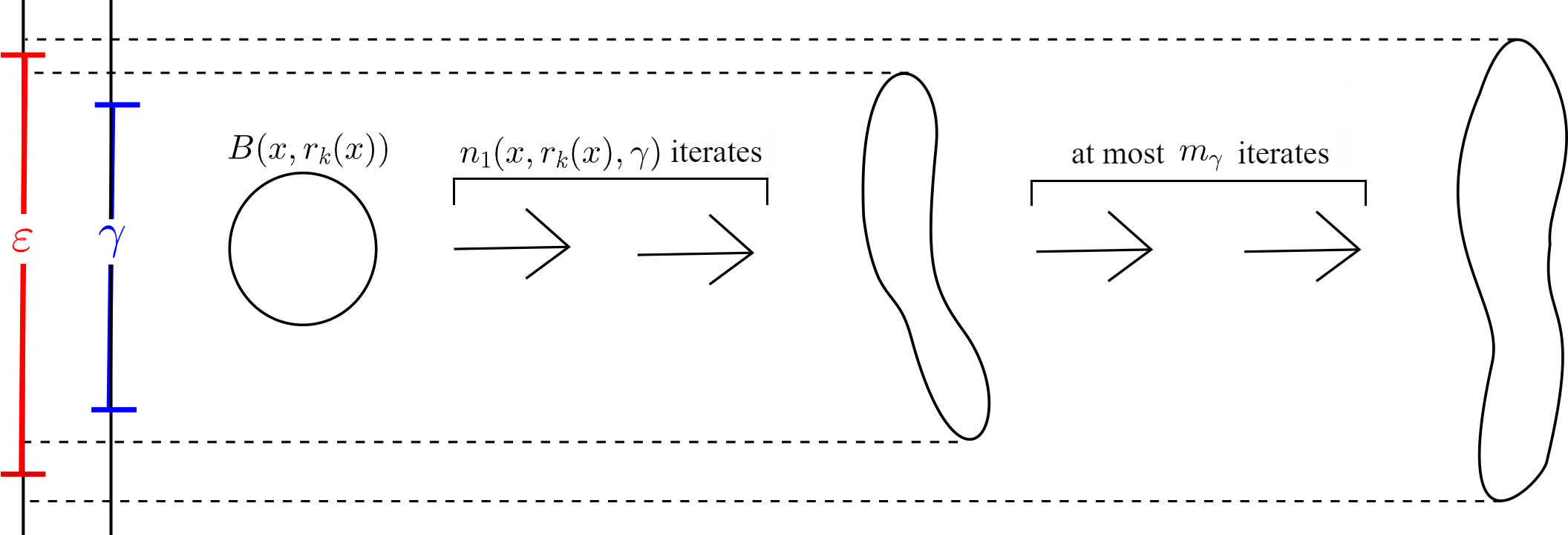} 
	\caption{Property F2}
 \label{figura:F2}
\end{figure}

Ft-sensitivity can be defined in any metric space, but for our purposes we impose additional hypothesis on the space. We assume that $(X,d)$ is a compact and connected metric space satisfying:
\begin{itemize}
	\item[(P1)] there exists $r>0$ such that $B(x,r')$ is connected for every $r'\in(0,r)$ and every $x\in X$;
	\item[(P2)] the map $(x,s)\to \overline{B(x,s)}$ is continuous in the Hausdorff topology;
\end{itemize} 
where $\overline{A}$ denotes the closure of a set $A$. Properties (P1) and (P2) mean that balls with sufficiently small radius are connected and that these balls vary continuously with their centers and radius. These are mild conditions on the topology of the space and are satisfied, for example, by all closed manifolds, the Hilbert cube $[0,1]^\mathbb{Z}$ and more generally by Peano continua, that are compact, connected and locally connected metric spaces, when they are endowed with a convex metric (see \cite{Nadler}).

Now we explain the structure of this paper. In Section 2 we prove that first-time sensitivity implies the existence of local unstable continua with uniform diameter on every point of the space satisfying similar properties with the local unstable continua of cw-expansive homeomorphisms. We call them local cw-unstable continua and Section 2 is devoted to prove their existence and main properties. 
In Section 3 we discuss our main examples of first-time sensitive homeomorphisms: the cw-expansive homeomorphisms, the full shift on the Hilbert cube $[0,1]^{\Z}$, and some partially hyperbolic diffeomorphisms. We also briefly discuss how to find the local cw-unstable continua in each case. In Section 4 we present our attempts to prove that first-time sensitivity implies positive topological entropy, explain the difficulties and how to circumvent them with some additional technical hypotheses.

\section{Local cw-unstable continua}

Let $f\colon X\to X$ be a homeomorphism of a compact metric space $(X,d)$. We consider the \emph{c-stable set} of $x\in X$ as the set 
$$W^s_{c}(x):=\{y\in X; \,\, d(f^k(y),f^k(x))\leq c \,\,\,\, \textrm{for every} \,\,\,\, k\geq 0\}$$
and the \emph{c-unstable set} of $x$ as the set 
$$W^u_{c}(x):=\{y\in X; \,\, d(f^k(y),f^k(x))\leq c \,\,\,\, \textrm{for every} \,\,\,\, k\leq 0\}.$$
We consider the \emph{stable set} of $x\in X$ as the set 
$$W^s(x):=\{y\in X; \,\, d(f^k(y),f^k(x))\to0 \,\,\,\, \textrm{when} \,\,\,\, k\to\infty\}$$
and the \emph{unstable set} of $x$ as the set 
$$W^u(x):=\{y\in X; \,\, d(f^k(y),f^k(x))\to0 \,\,\,\, \textrm{when} \,\,\,\, k\to-\infty\}.$$
The dynamical ball of $x$ with radius $c$ is the set $$\Gamma_{c}(x)=W^u_{c}(x)\cap W^s_{c}(x).$$ 
We say that $f$ is \emph{expansive} if there exists $c>0$ such that $$\Gamma_c(x)=\{x\} \,\,\,\,\,\, \text{for every} \,\,\,\,\,\, x\in X.$$ We say that $f$ is \emph{continuum-wise expansive} if there exists $c>0$ such that $\Gamma_{c}(x)$ is totally disconnected for every $x\in X$.
We denote by $C^s_c(x)$ the $c$-stable continuum of $x$, that is the connected component of $x$ on $W^s_{c}(x)$, and denote by $C^u_c(x)$ the $c$-unstable continuum of $x$, that is the connected component of $x$ on $W^u_{c}(x)$.

\vspace{+0.8cm}

\hspace{-0.4cm}\textbf{Existence of local unstable/stable continua:}

\vspace{+0.8cm}

\hspace{-0.4cm}It is proved in \cite{Kato2} that for a cw-expansive homeomorphism the following holds: 

\begin{theorem}\label{cw}\emph{[}Theorem 1.6 in \cite{Kato2}\emph{]}
If $f\colon X\to X$ is a cw-expansive homeomorphism of a Peano continuum $(X,d)$, with cw-expansivity constant $c>0$, then for every $\eps>0$ there exists $\delta>0$ such that $$\diam(C^s_{\eps}(x))\geq\delta \,\,\,\,\,\, \text{and} \,\,\,\,\,\, \diam(C^u_{\eps}(x))\geq\delta \,\,\,\,\,\, \text{for every} \,\,\,\,\,\, x\in X.$$
\end{theorem}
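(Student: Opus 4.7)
The plan is to argue by contradiction, combining compactness of the hyperspace of closed subsets of $X$ with the uniform continuity of $f^{\pm 1}$. By the symmetry $f\leftrightarrow f^{-1}$ it is enough to establish the unstable bound, and since $C^u_\eps$ is monotone in $\eps$, I may assume $\eps\in(0,c)$.

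Suppose no uniform $\delta>0$ works: there is a sequence $x_n\in X$ with $\diam C^u_\eps(x_n)\to 0$ and, by compactness, $x_n\to x$. Since $X$ is a Peano continuum (and therefore uniformly locally connected) I construct continua $K_n\ni x_n$ of a fixed positive diameter $\eta$ (say $\eta=\eps/2$); extracting a Hausdorff-convergent subsequence, $K_n\to K$ with $\diam K=\eta$. For $n$ large, $\diam K_n>\diam C^u_\eps(x_n)$, so $K_n$ cannot sit inside $W^u_\eps(x_n)$ and there is a least integer $m_n\geq 1$ with $\diam f^{-m_n}(K_n)>\eps$. Uniform continuity of $f^{-1}$ then gives $\eps^*=\eps^*(\eps)>\eps$ with $\diam f^{-m_n}(K_n)\leq\eps^*$.

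Set $L_n=f^{-m_n}(K_n)$ and, after a further Hausdorff extraction, $L_n\to L$ with $\diam L\in[\eps,\eps^*]$. When $m_n\to\infty$, minimality of $m_n$ forces $\diam f^j(L_n)\leq\eps$ for every fixed $j\geq 1$ and $n$ large, so $\diam f^j(L)\leq\eps$ for every $j\geq 1$. The same uniform continuity of $f^{-1}$, applied in reverse, keeps $\diam f(L_n)=\diam f^{-(m_n-1)}(K_n)$ bounded below by a positive constant (otherwise $\diam f^{-m_n}(K_n)$ could not exceed $\eps$), so $f(L)$ is a non-degenerate subcontinuum with $\diam f^j(f(L))\leq\eps$ for all $j\geq 0$. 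This places $f(L)\subset C^s_\eps(p)$ for every $p\in f(L)$, producing a point $p$ with $\diam C^s_\eps(p)$ bounded below by a constant depending only on $\eps$. The case of bounded $(m_n)$ is handled by the same Hausdorff-limit technique.

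The hard part will be upgrading this ``some point'' statement to the uniform lower bound at \emph{every} $x\in X$, for both $C^s_\eps$ and $C^u_\eps$. I would close the argument by iterating the construction: run the dual first-blowup procedure backward in time on $L$ to produce a non-degenerate continuum whose full backward orbit has diameter $\leq\eps$, placing it in some $C^u_\eps(\cdot)$, and then apply cw-expansivity — a non-trivial continuum whose entire two-sided orbit has diameter $\leq c$ would lie in $\Gamma_c(q)$ for some $q$, contradicting its total disconnectedness — to propagate the positive lower bound to every point of $X$. Controlling these nested Hausdorff limits so that they do not degenerate, and making the resulting bound uniform over $x\in X$, is the substantive technical work carried out in \cite{Kato2}.
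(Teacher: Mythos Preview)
Your sketch follows Kato's original direct Hausdorff-limit strategy, but the paper derives Theorem~\ref{cw} by a completely different route: it endows the Peano continuum with a convex metric so that (P1) and (P2) hold, shows (Theorem~\ref{cwft}) that cw-expansiveness implies first-time sensitivity in this setting, applies the general Theorem~\ref{teoremacontinuosinst} to obtain the uniform lower bound on $C^u_\eps$ and $C^s_\eps$ in the convex metric, and finally transfers the bound back to the original metric $d$ via Lemma~\ref{topology}. So even a completed version of your argument would be a genuinely different proof from the one given here; the paper's detour through ft-sensitivity buys a more general theorem (valid for all ft-sensitive maps), at the cost of developing that machinery.

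That said, your argument as written does not close. From the hypothesis $\diam C^u_\eps(x_n)\to 0$ you correctly extract a non-degenerate $\eps$-stable continuum $f(L)$, but this is not a contradiction: the failure of the unstable bound at the points $x_n$ is perfectly compatible with the existence of a large $\eps$-stable continuum somewhere else in $X$. Your proposed fix, ``run the dual first-blowup procedure backward in time on $L$'', yields at best an $\eps$-unstable continuum $L'$, but nothing forces $L'$ to remain $\eps$-stable, so you never produce a non-trivial continuum inside $\Gamma_c(q)$ and cw-expansiveness is not violated. The phrase ``propagate the positive lower bound to every point of $X$'' is not an argument. What is missing is a mechanism that ties the limiting continua back to the original points $x_n$ (or to their limit $x$), so that the blowup you manufacture in one time direction is paired with the assumed boundedness in the other direction \emph{at the same location}; this is precisely the content of Kato's nested construction, and it cannot be waved away.
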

This means that the $\eps$-stable and $\eps$-unstable sets of any point $x\in X$ contain continua with uniform diameter intersecting at $x$. In this subsection we prove a similar result using only first-time sensitivity.

\begin{theorem}\label{teoremacontinuosinst}
Let $f:X\rightarrow X$ be a homeomorphism defined on a compact and connected metric space satisfying the Properties (P1) and (P2).
\begin{itemize}
    \item[(a)] If $f$ is ft-sensitive, then for each $\eps>0$
 there exists $\delta>0$ such that 
 $$\diam(C^u_\eps(x))\geq\delta \,\,\,\,\,\, \text{for every} \,\,\,\,\,\,x\in X.$$
 \item[(b)] If $f^{-1}$ is ft-sensitive, then for each $\eps>0$
 there exists $\delta>0$ such that 
 $$\diam(C^s_\eps(x))\geq\delta \,\,\,\,\,\, \text{for every} \,\,\,\,\,\, x\in X.$$
 \end{itemize}
\end{theorem}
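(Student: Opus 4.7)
The plan is to identify $C^u_\eps(x)$ with the Hausdorff limit of a decreasing sequence of continua $(C_n^{*}(x))$ through $x$ and, for each member of that sequence, to produce a subcontinuum through $x$ of diameter bounded below uniformly in $x$ and $n$. Part (b) follows from (a) applied to $f^{-1}$, since the $\eps$-stable continuum of $f$ at $x$ equals the $\eps$-unstable continuum of $f^{-1}$ at $x$.

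For each $n\in\N$ set
\[
K_n(x):=\bigcap_{j=0}^{n} f^{j}\bigl(\overline{B(f^{-j}(x),\eps)}\bigr)
\]
and let $C_n^{*}(x)$ denote the connected component of $x$ in $K_n(x)$. A routine check shows that $(K_n(x))$ is decreasing with $\bigcap_n K_n(x)=W^u_\eps(x)$, and hence $(C_n^{*}(x))$ is a decreasing sequence of continua through $x$ whose intersection is $C^u_\eps(x)$; the diameter of the intersection equals the limit of the diameters. It therefore suffices to find $\delta>0$ depending only on $\eps$ with $\diam C_n^{*}(x)\geq\delta$ for every $x\in X$ and every $n$ sufficiently large.

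Fix such $x$ and $n$; put $z:=f^{-n}(x)$ and $N_k:=n_1(z,r_k(z),\eps)$. The sequence $(N_k)$ is non-decreasing (smaller balls have later first increasing times) and tends to infinity; by (F1), consecutive jumps are bounded by $m_\eps$ once $r_k(z)\leq\eps$. A discrete intermediate-value argument then yields an index $k$ with $N_k\in[n+1,\,n+1+m_\eps]$. Let $L:=\overline{B(z,r_k(z))}$, which is a continuum for $k$ large by property (P1), and set $C_n:=f^{n}(L)$. Then $x\in C_n$; since $n<N_k$, the minimality of $N_k$ gives $\diam f^{n-j}(L)\leq\eps$ for every $j\in\{0,\dots,n\}$, and combined with $f^{-j}(x)=f^{n-j}(z)\in f^{n-j}(L)$ this yields $f^{-j}(C_n)=f^{n-j}(L)\subseteq\overline{B(f^{-j}(x),\eps)}$. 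Hence $C_n\subseteq K_n(x)$, and therefore $C_n\subseteq C_n^{*}(x)$.

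The final step is a uniform lower bound on $\diam C_n$. Note that $f^{N_k-n}(C_n)=f^{N_k}(L)$ has diameter greater than $\eps$. Writing $q:=N_k-n\in\{1,\dots,m_\eps+1\}$ and letting $\omega_q$ be a modulus of uniform continuity of $f^{q}$ on $X$ (available by compactness), the inequality $\diam f^{q}(C_n)>\eps$ forces $\diam C_n>\omega_q^{-1}(\eps)$; setting
\[
\delta:=\min_{1\leq q\leq m_\eps+1}\omega_q^{-1}(\eps)>0
\]
gives $\diam C_n\geq\delta$ uniformly in $x$ and $n$. Letting $n\to\infty$, $\diam C^u_\eps(x)\geq\delta$, proving (a). The hardest point is the selection of $k$ in the previous paragraph: only (F1) makes it possible to guarantee, uniformly in $x$ and $n$, that $N_k-n$ is bounded by a fixed constant; without such a bound the modulus-of-continuity estimate would not produce a $\delta$ independent of $x$. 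For an arbitrary $\eps'\in(0,\eps]$ the argument runs verbatim with $\gamma=\eps'$ and $m_{\eps'}$ in place of $m_\eps$, and the case $\eps'\geq\eps$ is immediate from $C^u_{\eps'}(x)\supseteq C^u_\eps(x)$.
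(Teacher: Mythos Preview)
Your proof is correct and follows essentially the same route as the paper's: both arguments use (F1) to select, for each $n$, a radius $r_k(f^{-n}(x))$ so that the first increasing time $N_k$ lies within $m_\eps$ of $n$, and then use uniform continuity of $f^{m_\eps}$ (or the finitely many $f^q$ with $1\le q\le m_\eps+1$) to get a uniform lower bound $\delta$ on $\diam f^n(\overline{B(f^{-n}(x),r_k)})$.

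The only cosmetic difference is in the limiting step. The paper passes to a Hausdorff accumulation point of the sequence $C_m=f^m(\overline{B(f^{-m}(x),r_{k_m})})$ and checks directly that the limit continuum lies in $W^u_\eps(x)$; you instead embed each $C_n$ into the component $C_n^{*}(x)$ of $x$ in the finite truncation $K_n(x)=\bigcap_{j=0}^n f^j(\overline{B(f^{-j}(x),\eps)})$ and use that $\bigcap_n C_n^{*}(x)=C^u_\eps(x)$ with $\diam$ passing to the limit for decreasing compacta. Both are routine; your variant is perhaps a touch more transparent about why the limit lands in $C^u_\eps(x)$, while the paper's Hausdorff-limit formulation is what they reuse later to define the family $\mathcal{F}^u$ of local cw-unstable continua.
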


We remark that in the proof of this theorem we only use property (F1) on the definition of ft-sensitivity and that property (F2) will be important to prove the main properties of these continua later in this section. To prove this result, we first note that for a fixed sensitivity constant $\eps$, the first increasing time $n_1(x,r,\eps)$ depends basically on the radius $r$ and not exactly on $x\in X$.

\begin{lemma}\label{n1}
If $f\colon X\to X$ is sensitive, with sensitivity constant $\varep$, and $X$ is a compact metric space satisfying hypothesis \emph{(P2)}, then for each $r>0$, there exists $N\in\N$ such that 
\[n_1(x,r,\eps)\leq N \,\,\,\,\,\, \text{for every} \,\,\,\,\,\, x\in X.\]
\end{lemma}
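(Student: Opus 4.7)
The plan is to combine pointwise finiteness of $n_1(x,r,\eps)$ (guaranteed by sensitivity) with a compactness argument to obtain a uniform bound. Fix $r>0$. For each $x\in X$, sensitivity applied at $x$ with scale $r$ produces some $y\in B(x,r)$ and $n\in\N$ with $d(f^n(x),f^n(y))>\eps$; in particular the diameter of $f^n(B(x,r))$ exceeds $\eps$, so $N(x):=n_1(x,r,\eps)$ is well defined and finite for every $x$.

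The heart of the argument is to promote this pointwise finiteness to a uniform bound. I would argue by contradiction: if no such $N$ existed, there would be a sequence $x_k\in X$ with $N(x_k)\to\infty$, and by compactness of $X$ we may assume $x_k\to x_*$ after passing to a subsequence. Pick any $r'\in(0,r)$; for $k$ large enough we have $d(x_k,x_*)<r-r'$, so the triangle inequality gives the inclusion $B(x_*,r')\subseteq B(x_k,r)$. Setting $n_*:=n_1(x_*,r',\eps)$ (finite by the previous paragraph applied to $x_*$ with radius $r'$), we obtain
$$\diam\bigl(f^{n_*}(B(x_k,r))\bigr)\ \geq\ \diam\bigl(f^{n_*}(B(x_*,r'))\bigr)\ >\ \eps,$$
which forces $N(x_k)\leq n_*$ for every large $k$, contradicting $N(x_k)\to\infty$.

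The only real subtlety is keeping the inclusion $B(x_*,r')\subset B(x_k,r)$ strict so that a diameter bound on the smaller open ball transfers to the larger one; this is exactly why one chooses $r'$ strictly less than $r$. I do not anticipate any deeper obstacle, since the conclusion is a soft consequence of sensitivity together with compactness of $X$. An alternative route (which is presumably why (P2) is listed among the hypotheses) would be to use (P2) to assert that $\overline{B(y,r)}\to\overline{B(x,r)}$ in the Hausdorff topology, combine this with continuity of $f^{N(x)}$ and with continuity of $\diam$ under Hausdorff convergence (and the identity $\diam(A)=\diam(\overline{A})$) to show that $N(\cdot)$ is upper semicontinuous, and then conclude by compactness. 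I would present the contradiction version above, as it is shorter and does not require invoking (P2).
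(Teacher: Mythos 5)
Your proof is correct and takes a genuinely different, more elementary route than the paper's. The paper also argues by contradiction with a convergent subsequence $x_{n_k}\to x$, but at the limit it invokes hypothesis (P2) — Hausdorff continuity of $(x,s)\mapsto\overline{B(x,s)}$ — together with uniform continuity of $f$ to pass the bounds $\diam(f^j(B(x_{n_k},r)))\leq\eps$ to the limit and derive $\diam(f^j(B(x,r)))\leq\eps$ for every $j\in\N$, contradicting sensitivity at $x$. You sidestep that machinery entirely: for any $r'\in(0,r)$, once $d(x_k,x_*)<r-r'$ you get the inclusion $B(x_*,r')\subset B(x_k,r)$; forward images preserve inclusion and $\diam$ is monotone under inclusion, so the finite first-increasing time $n_*=n_1(x_*,r',\eps)$ of the smaller ball caps $n_1(x_k,r,\eps)$ by $n_*$ for all large $k$, contradiction. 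You are right that this removes (P2) as a hypothesis — your argument needs only sequential compactness and sensitivity, so the conclusion holds in any compact metric space, which is strictly more general than what the paper states and proves. The paper's Hausdorff-continuity argument buys nothing extra for this particular lemma beyond stylistic consistency with limit arguments used elsewhere in Section 2; your version is shorter and cleaner.
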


\begin{proof}
If the conclusion is not true, then there exists $r>0$ such that for each $n\in\N$ there exists $x_n\in X$ such that $n_1(x_n,r,\eps)\geq n$. This means that
\[\diam(f^j(B(x_n,r)))\leq\varep \,\,\,\,\,\, \text{for every} \,\,\,\,\,\, j\in\{0,\dots,n-1\}.\] If $x=\lim_{k\to\infty}x_{n_k}$, then uniform continuity of $f$ and property (P2) on the space $X$ assure that
\[\diam(f^j(B(x,r)))=\lim_{k\to\infty}\diam(f^j(B(x_{n_k},r)))\leq\varep \,\,\,\,\,\, \text{for every} \,\,\,\,\,\, j\in\N,\]
	contradicting sensitivity.
\end{proof}



\begin{proof}[Proof of the Theorem \ref{teoremacontinuosinst}]


Assume that $f$ is sensitive homeomorphism with sensitivity constant $c>0$ and choose $r\in(0,c)$, given by Property (P1) on the space $X$, such that $B(x,r')$ is connected for every $r'\in(0,r)$. Let $\eps\in(0,r)$ be arbitrary and note that $\eps$ is also a sensitivity constant of $f$. By hypothesis (F1), there exist $(r_k)_{k\in\mathbb{N}}\colon X\to\mathbb{R}^*_+$ and $m_\eps\in\mathbb{N}$ satisfying
$$n_1(x,r_{k+1}(x),\eps)-n_1(x,r_{k}(x),\eps)\leq m_\eps.$$
For each $m\in\N$, let $x_m=f^{-m}(x)$ and, for each $k\in\N$, consider
$$r_{k,m}=r_k(x_m) \,\,\,\,\,\, \text{and} \,\,\,\,\,\, n_{k,m}=n_1(x_m,r_{k,m},\eps).$$
Lemma \ref{n1} assures the existence of $N\in\N$ such that
$$n_1(x,r_1(x),\eps)\leq N \,\,\,\,\,\, \text{for every} \,\,\,\,\,\, x\in X.$$ Then, (F1) assures that for each $m\geq N$, we can choose $k_m\in\mathbb{N}$ such that $$n_{k_m-1,m}< m\leq n_{k_m,m}.$$
It follows that
$$|n_{k_m,m}-m|<|n_{k_m,m}-n_{k_m-1,m}|<m_\eps.$$
The definitions of $n_{k_m,m}$ and $r_{k_m,m}$ guarantees that
$$\diam(f^j(B(x_m, r_{k_m,m}))\leq\eps \,\,\,\,\,\, \text{for every} \,\,\,\,\,\, j\in[0,n_{k_m,m})\cap\N$$
$$\text{and} \,\,\,\,\,\,  \diam(f^{n_{k_m,m}}(B(x_m,r_{k_m,m})))>\eps.$$
Since $f^{-1}$ is uniformly continuous, there exists $\delta>0$ such that
$$\mbox{diam}(A)\geq\varepsilon \,\,\,\,\,\, \text{implies} \,\,\,\,\,\, \mbox{diam}(f^{-n}(A))\geq\delta \,\,\,\,\,\, \text{for every} \,\,\,\,\,\, n\in[0,m_\eps].$$
This assures that
$$\diam(f^{m}(B(x_m,r_{k_m,m})))=\diam(f^{m-n_{k_m,m}}(f^{n_{k_m,m}}(B(x_m,r_{k_m,m}))))\geq\delta.$$
For each $m\geq N$, let $C_m=f^{m}(\overline{B(x_m,r_{k_m,m})})$ and notice that $C_m$ is a continuum satisfying:
\begin{itemize}
	\item[(1)] $x\in C_m$;
	\item[(2)] $\mbox{diam}(C_m)\geq\delta$;
	\item[(3)] $\mbox{diam}(f^{-j}(C_m))\leq\varepsilon$ when $0\leq j\leq m$.
\end{itemize}

\begin{figure}[h]
	\centering 
	\includegraphics[width=12cm]{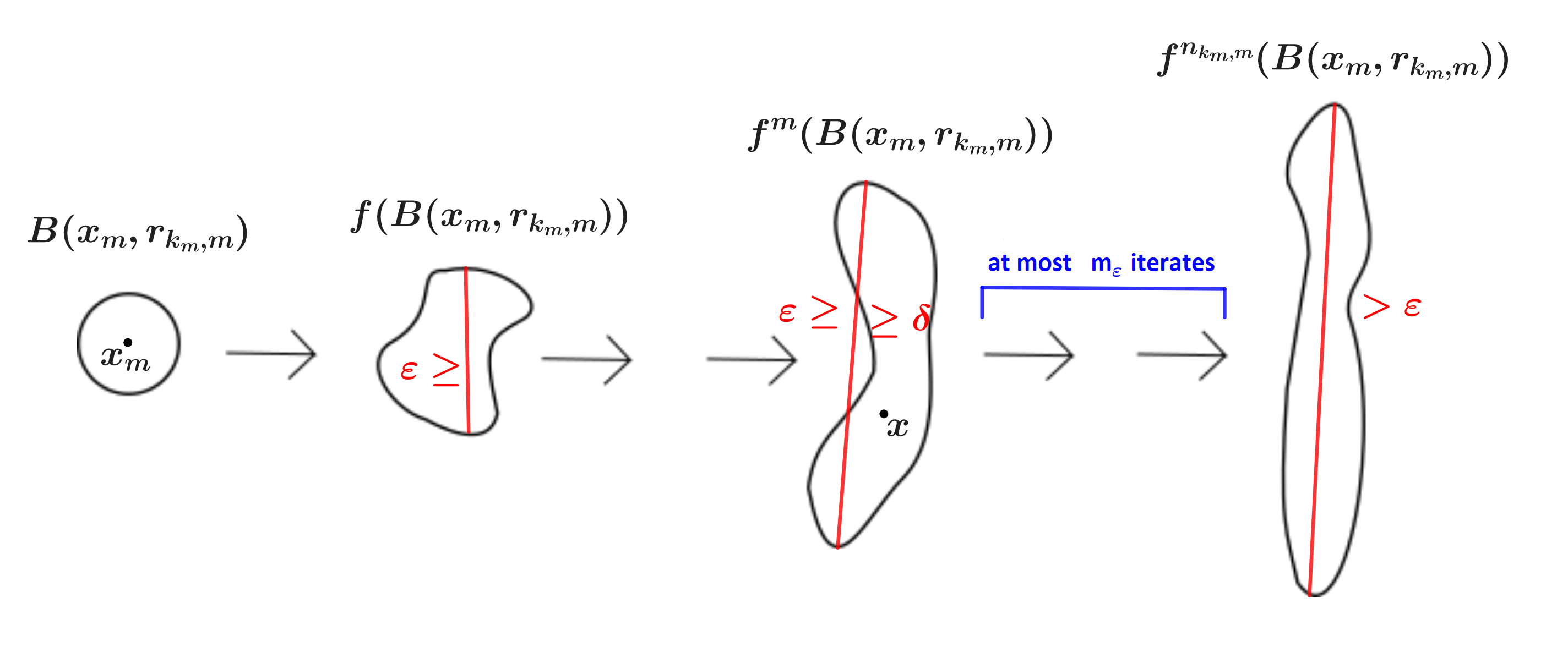} 
	\caption{The choose that $k_m$ e $C_m$.}
 \label{figura:construcaocontinuo}
\end{figure}

Thus, if $C_x$ is an accumulation continuum of the sequence $(C_m)_{m\in\N}$ in the Hausdorff metric, that is, 
$$C_x=\lim_{l\rightarrow \infty}C_{m_l},$$ then $C_x$ satisfies:
\begin{itemize}
	\item[(1)] $C_x$ is a continuum, as a Hausdorff limit of continua;
	\item[(2)] $\mbox{diam}(C_x)\geq\delta$, since $\mbox{diam}(C_{m_l})\geq\delta$ for every $m_l\geq N$;
	\item[(3)] $x\in C_x$, since $x\in C_{m_l}$ for every $m_l\geq N$;
	\item[(4)] $C_x\subset W^u_\varepsilon(x)$, since for each $j\in\mathbb{N}$ we have
$$\mbox{diam}(f^{-j}(C_x))=\lim_{l\rightarrow\infty}(f^{-j}(C_{m_l}))\leq\varepsilon.$$ 
\end{itemize}
This proves that $\diam(C^u_{\eps}(x))\geq\delta$ for every $x\in X$ and complete the proof of the first item. A similar argument deals with item (b) where $f^{-1}$ is ft-sensitive and proves, in this case, that $\diam(C^s_{\eps}(x))\geq\delta$ for every $x\in X$. 

\end{proof}

This actually generalizes Theorem \ref{cw} since we can prove it assuming Theorem \ref{teoremacontinuosinst} as follows. First, we observe that Peano continua do not necessarily satisfy hypothesis (P1) and (P2) on the space, but every Peano continuum can be endowed with a convex metric and, in this case, hypothesis (P1) and (P2) are satisfied. A metric $D$ for a continuum $X$ is called \emph{convex} if for each $x,y\in X$, there exists $z\in X$ such that

\vspace{0.3cm}
\noindent (3) \hspace{3.3cm}
$\displaystyle D(x,z)=\frac{D(x,y)}{2}=D(y,z).$
\vspace{0.3cm}

This assures that the closure of the open ball equals the closed ball, i. e., 
$$\overline{B_D(x,\delta)}=\{y\in X\;;\;D(x,y)\leq\delta\} \,\,\,\,\,\, \text{for every} \,\,\,\,\,\, x\in X \,\,\,\,\,\, \text{and} \,\,\,\,\,\, \delta>0.$$ 
Then, Theorem 3.3 in \cite{N2} ensures that (P2) is satisfied. See \cite[Proposition 10.6]{Nadler} for a proof that balls with a convex metric satisfy (P1). We will prove in Proposition \ref{cwft} that cw-expansivity implies first-time sensitivity when defined on spaces satisfying (P1) and (P2) and, in particular, Peano continua endowed with a convex metric. Thus, Theorem \ref{cw} is a particular case of Theorem \ref{teoremacontinuosinst} if we assume the space is endowed with a convex metric. For a general metric, we can argument as follows.

\begin{lemma}\label{topology}
If $d$ and $D$ are compact metrics on the same space $X$ generating the same topology, then for every $\eps>0$ there exists $\rho>0$ such that 
$$D(x,y)<\rho \,\,\,\,\,\, \text{implies} \,\,\,\,\,\, d(x,y)<\eps \,\,\,\,\,\, \text{for every} \,\,\,\,\,\, (x,y)\in X\times X.$$
\end{lemma}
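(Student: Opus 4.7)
The statement is a standard fact: on a compact space, any two metrics generating the same topology are uniformly equivalent, and the lemma is just one direction of this. The cleanest conceptual plan is to view the claim as uniform continuity of the identity map $\mathrm{id}\colon (X,D)\to (X,d)$. Since $d$ and $D$ generate the same topology, this identity map is continuous; since its domain $(X,D)$ is compact, every continuous map from $(X,D)$ into a metric space is uniformly continuous, which is exactly what the lemma asserts.

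For a self-contained argument I would give a direct contradiction proof by sequential compactness, avoiding any appeal to the uniform continuity theorem. Suppose the conclusion fails: then there exists $\eps_0>0$ and, for each $n\in\N$, a pair $x_n,y_n\in X$ with $D(x_n,y_n)<1/n$ and $d(x_n,y_n)\geq\eps_0$. By compactness of $(X,d)$, pass to a subsequence along which $x_{n_k}\to x$ in $d$; topological equivalence then yields $x_{n_k}\to x$ in $D$ as well. The triangle inequality
\[
D(y_{n_k},x)\;\leq\; D(y_{n_k},x_{n_k})+D(x_{n_k},x)\;\longrightarrow\;0
\]
shows that $y_{n_k}\to x$ in $D$, hence also in $d$, so $d(x_{n_k},y_{n_k})\to 0$. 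This contradicts $d(x_{n_k},y_{n_k})\geq \eps_0$, finishing the proof.

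There is essentially no obstacle here, and no ingredient of first-time sensitivity is used: the content of the lemma is purely the general fact that topological equivalence plus compactness upgrades to uniform equivalence. The only observation worth flagging is that the argument is symmetric in $d$ and $D$, so swapping their roles yields the converse implication and hence full uniform equivalence of the two metrics. This symmetric version is presumably what will be invoked in the sequel to transfer quantitative hypotheses, such as Properties (P1), (P2), or the estimates appearing in the definition of first-time sensitivity, between an arbitrary metric on a Peano continuum and the convex metric whose existence was recalled in the discussion following Theorem~\ref{teoremacontinuosinst}.
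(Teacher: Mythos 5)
Your proof is correct and follows the same strategy as the paper: assume the contrary, extract a sequence of pairs $(x_n,y_n)$ with $D(x_n,y_n)\to 0$ but $d(x_n,y_n)\geq\eps_0$, pass to a convergent subsequence, and derive a contradiction with the fact that $d$ and $D$ induce the same convergent sequences. The only superficial difference is that you extract the convergent subsequence in the metric $d$ while the paper does so in $D$, and you add the (correct) high-level remark that this is just uniform continuity of $\mathrm{id}\colon(X,D)\to(X,d)$ on a compact domain.
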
 

\begin{proof}
If this is not the case, there exists $\eps>0$ such that for each $n\in\N$ there exists $(x_n,y_n)\in X\times X$ such that
$$D(x_n,y_n)<\frac{1}{n} \,\,\,\,\,\, \text{and} \,\,\,\,\,\, d(x_n,y_n)\geq\eps.$$
Thus, $(x_n)_{n\in\N}$ and $(y_n)_{n\in\N}$ are sequences of $X$ that have the same accumulation points on the metric $D$ but are at least $\eps$-distant from each other on the metric $d$. Thus, if $(x_{n_k})_{k\in\N}$ converges to $z$ on the metric $D$, then $(y_{n_k})_{k\in\N}$ also does. But on the metric $d$ they cannot converge to $z$ simultaneously and we obtain a sequence that converges to $z$ on the metric $D$ but do not on the metric $d$, contradicting that they generate the same topology.
\end{proof}

\begin{proof}[Proof of Theorem \ref{cw}]
Let $\diam_d$ and $\diam_D$ denote the diameter on the metric $d$ and $D$, respectively. For each $\eps>0$ choose $\eps'\in(0,\eps)$ given by Lemma \ref{topology} such that $$D(x,y)<\eps' \,\,\,\,\,\, \text{implies} \,\,\,\,\,\, d(x,y)<\eps \,\,\,\,\,\, \text{for every} \,\,\,\,\,\, (x,y)\in X\times X.$$ 
If $x\in X$ and $y\in C^u_{\eps'}(x)$, that is, 
$$D(f^{-n}(x),f^{-n}(y))<\eps' \,\,\,\,\,\, \text{for every} \,\,\,\,\,\, n\in\N,$$ then the choice of $\eps'$ assures that
$$d(f^{-n}(x),f^{-n}(y))\leq\eps \,\,\,\,\,\, \text{for every} \,\,\,\,\,\, n\in\N.$$ Hence, $C^u_{\eps'}(x)$ is an $\eps$-unstable continuum on the metric $d$.
Now let $\rho\in(0,\eps')$ given by Theorem \ref{teoremacontinuosinst} be such that
$$\diam_D(C^u_{\eps'}(x))\geq\rho \,\,\,\,\,\, \text{for every} \,\,\,\,\,\, x\in X.$$ The previous lemma assures the existence of $\delta\in(0,\rho)$ such that $$d(x,y)<\delta \,\,\,\,\,\, \text{implies} \,\,\,\,\,\, D(x,y)<\rho \,\,\,\,\,\, \text{for every} \,\,\,\,\,\, (x,y)\in X\times X.$$ It follows that 
$$\diam_d(C^u_{\eps'}(x))\geq\delta \,\,\,\,\,\, \text{for every} \,\,\,\,\,\, x\in X$$ since $\diam_D(C^u_{\eps'}(x))\geq\rho$. Thus, $C^u_{\eps'}(x)$ is an $\eps$-unstable continuum on the metric $d$ with diameter at least $\delta$ for every $x\in X$. A similar argument proves that $C^s_{\eps'}(x)$ is an $\eps$-stable continuum on the metric $d$ with diameter at least $\delta$ for every $x\in X$.
\end{proof}


\vspace{+0.8cm}

\hspace{-0.4cm}\textbf{Properties of local cw-unstable continua:}

\vspace{+0.8cm}

\hspace{-0.4cm}The proof of Theorem \ref{teoremacontinuosinst} is actually more important then the statement of the result itself since it gives us an alternative way of creating local unstable continua that we will use in this paper and can be summarized as follows. For each $x\in X$ and each $m\in\N$ we choose an appropriate radius $r_m>0$ such that $$n_1(f^{-m}(x),r_m,\eps)\in[m,m+m_{\eps}]$$ and this implies that any accumulation continuum of the sequence $$(f^m(B(f^{-m}(x),r_m)))_{m\in\N}$$ is an $\eps$-unstable continuum, with diameter at least $\delta$ that comes from the uniform continuity of $f^{m_{\eps}}$. In this subsection we will discuss the main properties of continua that can be constructed in this way and compare them with properties of the local unstable continua of cw-expansive homeomorphisms. For that, we define the set of such continua as follows:
\[\mathcal{F}^u=\left\{C=\displaystyle \lim_{k\rightarrow\infty}f^{n_k}(\overline{B(f^{-n_k}(x),r_{n_k})})\left|\begin{array}{l}
x\in X,\; n_k\to\infty, r_{n_k}\to0, \gamma\in(0,\eps],\\
n_1(f^{-n_k}(x),r_{n_k},\gamma)\in (n_k,n_k+m_\gamma]\; 
\end{array}\right.\right\}.\]
Elements of $\mathcal{F}^u$ are called local cw-unstable continua and are the main object of discussion of this section. We proved in Theorem \ref{teoremacontinuosinst} that there exist local cw-unstable continua passing through each $x\in X$. We will prove that local cw-unstable continua are unstable, that is, every $C\in\mathcal{F}^u$ satisfies 
$$\diam(f^k(C))\to0 \,\,\,\,\,\, \text{when} \,\,\,\,\,\, k\to-\infty,$$ and that their diameter increases uniformly (depending only on the sensitivity constant $\gamma$) when they are iterated forward. These properties are similar to the properties satisfied by the local stable/unstable continua of cw-expansive homeomorphisms and this is the reason that we call continua in $\mathcal{F}^u$ local cw-unstable. 

The sensitivity constant $\gamma$ in the definition of $\mathcal{F}^u$ will determine the increasing and decreasing times of local cw-unstable continua. Thus, we separate continua in $\mathcal{F}^u$ that are associated with distinct sensitivity constants as follows: for each $\gamma\in(0,\eps]$, let
$$\mathcal{F}^u_{\gamma}=\left\{C=\displaystyle \lim_{k\rightarrow\infty}f^{n_k}(\overline{B(f^{-n_k}(x),r_{n_k})})\left|\begin{array}{l}
x\in X,\; n_k\to\infty, r_{n_k}\to0,\\
n_1(f^{-n_k}(x),r_{n_k},\gamma)\in (n_k,n_k+m_\gamma]\; 
\end{array}\right.\right\}.$$
We note that $\mathcal{F}^u$ and $\mathcal{F}^u_{\gamma}$ depend on the sensitivity constant $\eps$ and during this whole section we will choose $\eps$ as in the beginning of the proof of Theorem \ref{teoremacontinuosinst}. 
In the next result we prove that the diameter of continua in $\mathcal{F}^u_{\gamma}$ increase more than $\eps$ in at most $2m_{\gamma}$ iterates. In the proof we use the following notation: if $A\subset X$, then $n_1(A,\eps)$ denotes the first increasing time of the set $A$ with respect to $\eps$.
\begin{prop}\label{crescimentouniforme}
If $C\in\mathcal{F}^u_\gamma$, then there exists $\ell_\gamma\in\{0,1,\ldots, 2m_\gamma\}$ such that \[\diam(f^{\ell_\gamma}(C))\geq\eps.\] 
\end{prop}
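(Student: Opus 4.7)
The plan is to use property (F2) to convert the first-increasing-time window in the definition of $\SF^u_\gamma$ (which uses threshold $\gamma$) into an analogous window for the larger threshold $\eps$, and then pass to a Hausdorff limit along a subsequence on which the relevant iterate is constant.

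Write $C = \lim_{k\to\infty} f^{n_k}(\overline{B(f^{-n_k}(x),r_{n_k})}) \in \SF^u_\gamma$, so that $N_k := n_1(f^{-n_k}(x),r_{n_k},\gamma) \in (n_k, n_k + m_\gamma]$, and set $M_k := n_1(f^{-n_k}(x),r_{n_k},\eps)$. Because $\gamma \leq \eps$, every time the diameter of an iterate of the ball exceeds $\eps$ it also exceeds $\gamma$, so $N_k \leq M_k$; applying (F2) yields $M_k \leq N_k + m_\gamma$, hence
\[
M_k \in (n_k,\, n_k + 2m_\gamma].
\]
Setting $\ell_k := M_k - n_k \in \{1,\dots,2m_\gamma\}$, the defining property of $M_k$ gives
\[
\diam\bigl(f^{\ell_k}(f^{n_k}(\overline{B(f^{-n_k}(x),r_{n_k})}))\bigr) = \diam\bigl(f^{M_k}(\overline{B(f^{-n_k}(x),r_{n_k})})\bigr) > \eps.
\]

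Since $\ell_k$ takes values in the finite set $\{1,\dots,2m_\gamma\}$, extract a subsequence $(k_j)$ on which $\ell_{k_j} = \ell_\gamma$ is constant. Continuity of $f^{\ell_\gamma}$ together with the standard fact that $\diam$ is continuous with respect to Hausdorff convergence of closed subsets of the compact space $X$ then gives
\[
\diam(f^{\ell_\gamma}(C)) = \lim_{j\to\infty}\diam\bigl(f^{\ell_\gamma}(f^{n_{k_j}}(\overline{B(f^{-n_{k_j}}(x),r_{n_{k_j}})}))\bigr) \geq \eps,
\]
which furnishes the desired $\ell_\gamma \in \{0,1,\dots,2m_\gamma\}$. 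The single delicate point is the use of (F2): its hypothesis is stated for radii of the form $r_k(x)$, so one must read the definition of $\SF^u_\gamma$ with the understanding that the sequence $r_{n_k}$ arising in the defining Hausdorff limit is drawn from (or is comparable with) the ft-sensitivity sequence, so (F2) applies. Beyond this, the argument is a mechanical combination of the finite-range pigeonhole and continuity of $\diam$ under Hausdorff limits.
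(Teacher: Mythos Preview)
Your proof is correct and follows essentially the same approach as the paper: use (F2) to shift the first-increasing-time window from threshold $\gamma$ to threshold $\eps$, obtaining $n_1(f^{-n_k}(x),r_{n_k},\eps)\in(n_k,n_k+2m_\gamma]$, then pigeonhole on the finite range $\{1,\dots,2m_\gamma\}$ to extract a constant $\ell_\gamma$ along a subsequence, and finally pass to the Hausdorff limit using continuity of $f^{\ell_\gamma}$ and of $\diam$. Your observation about the applicability of (F2) to the radii $r_{n_k}$ is perceptive; the paper's own proof applies (F2) in exactly the same way without commenting on this point.
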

\begin{proof}
If $C\in\mathcal{F}^u_\gamma$, then there exist $x\in X,\;n_k\to\infty$ and $r_{n_k}\to0$ such that
\[C=\lim_{k\rightarrow\infty}f^{n_k}(\overline{B(f^{-n_k}(x),r_{n_k})}) \,\,\,\,\,\, \text{and}\]
\[n_1(f^{-n_k}(x),r_{n_k},\gamma)\in (n_k,n_k+m_\gamma] \,\,\,\,\,\, \text{for every} \,\,\,\,\,\, k\in\N.\] 
Property (F2) says that 
$$|n_1(f^{-n_k}(x),r_{n_k}(x),\gamma) - n_1(f^{-n_k}(x),r_{n_k}(x),\eps)|\leq m_\gamma$$ and this assures that 
\[n_1(f^{-n_k}(x),r_{n_k},\eps) \in(n_k,n_k+2m_\gamma].\]
Consequently,
\[n_1(f^{n_k}(B(f^{-n_k}(x),r_{n_k})),\eps)\in\{1,2,\ldots,2m_\gamma\} \,\,\,\,\,\, \text{for every} \,\,\,\,\,\, k\in\N\] and, thus, there exist $\ell_\gamma\in\{1,2,\ldots,2m_\gamma\}$ and an infinite subset $K\subset\N$ such that \[n_1(f^{n_k}(B(f^{-n_k}(x),r_{n_k})),\eps)=\ell_\gamma \,\,\,\,\,\, \text{for every} \,\,\,\,\,\, k\in K.\] 
Therefore,
$$\diam(f^{\ell_\gamma}(C))=\lim_{k\to\infty}\diam(f^{\ell_\gamma}(f^{n_k}(\overline{B(f^{-n_k}(x),r_{n_k})})))\geq\eps$$ and the proof is complete.
\end{proof}

In the next proposition we prove that local cw-unstable continua increase regularly in the future.


\begin{prop}\label{crescimentoregularcont}
If $C\in\mathcal{F}^u$, then for each $n\in\N$ there is $n'\in\{n,\ldots,n+m_\eps\}$ such that $\diam(f^{n'}(C))\geq\eps$.
\end{prop}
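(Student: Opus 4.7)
The strategy is to emulate the construction in Proposition \ref{crescimentouniforme}, but with two twists: base the construction at $f^n(x)$ rather than at $x$, and switch from the original sensitivity constant $\gamma$ to $\eps$ itself. The goal is to produce an auxiliary continuum $D\in\mathcal{F}^u_\eps$ satisfying $D\subseteq f^n(C)$; once this is achieved, the $\eps$-expansion of $D$ will transfer to $f^n(C)$.

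Writing $C=\lim_k f^{n_k}(\overline{B(y_k,r_{n_k})})$ with $y_k=f^{-n_k}(x)=f^{-(n+n_k)}(f^n(x))$, one has $f^n(C)=\lim_k f^{n+n_k}(\overline{B(y_k,r_{n_k})})$. For each sufficiently large $k$, I would apply Property (F1) with sensitivity constant $\eps$ to the decreasing sequence of radii $(r_j(y_k))_{j\in\N}$. By Lemma \ref{n1} the values $a_j:=n_1(y_k,r_j(y_k),\eps)$ start below a uniform bound $N$, are non-decreasing in $j$, tend to $\infty$ as $r_j(y_k)\to 0$, and satisfy $a_{j+1}-a_j\leq m_\eps$. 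Taking $j_k$ minimal with $a_{j_k}>n+n_k$ then produces $\rho_k:=r_{j_k}(y_k)$ with $a_{j_k}\in(n+n_k,\, n+n_k+m_\eps]$.

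The inclusion $D\subseteq f^n(C)$ requires $\rho_k\leq r_{n_k}$; by monotonicity of the first-increasing time in the radius this reduces to $n+n_k\geq n_1(y_k,r_{n_k},\eps)$, which by (F2) holds automatically once $n\geq 2m_\gamma$. For the finitely many smaller values of $n$ one either takes $\rho_k:=r_{n_k}$ along a subsequence where $n_1(y_k,r_{n_k},\eps)\in(n+n_k,\, n+n_k+m_\eps]$, or invokes Proposition \ref{crescimentouniforme} directly. Passing to a Hausdorff limit along a subsequence then yields a continuum $D\subseteq f^n(C)$ with $D\in\mathcal{F}^u_\eps$, anchored at $f^n(x)$ and iterated $n+n_k$ times.

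Finally, I would rerun the argument of Proposition \ref{crescimentouniforme} on $D$ with $\gamma=\eps$: in this case Property (F2) degenerates to a tautology and no factor of two is incurred, so one obtains $\ell\in\{1,\ldots,m_\eps\}$ with $\diam(f^\ell(D))\geq\eps$. Since $D\subseteq f^n(C)$ the bound $\diam(f^{n+\ell}(C))\geq\eps$ follows, and $n':=n+\ell\in\{n,\ldots,n+m_\eps\}$ is the required witness. The main obstacle I foresee is the simultaneous management of the two constraints on $\rho_k$---small enough to push the first $\eps$-increasing time past $n+n_k$, yet no larger than $r_{n_k}$ so that $D\subseteq f^n(C)$; Property (F1) is precisely what allows one to navigate between these requirements via the discrete sequence $(r_j(y_k))_{j\in\N}$, while Lemma \ref{n1} pins down the starting point uniformly in $x$.
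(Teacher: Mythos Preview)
Your proposal is correct and follows essentially the same route as the paper: both arguments use property (F1) to shrink the radius $r_{n_k}$ to some $\rho_k$ (the paper writes $r_{t_k}$) so that $n_1(y_k,\rho_k,\eps)\in(n_k+n,\,n_k+n+m_\eps]$, and then exploit the inclusion $B(y_k,\rho_k)\subset B(y_k,r_{n_k})$ to transfer the $\eps$-growth back to the original approximants of $C$. The only difference is packaging: the paper works directly at the approximant level (the inclusion gives $\diam(f^{\ell_k+n_k}(B(y_k,r_{n_k})))\geq\diam(f^{\ell_k+n_k}(B(y_k,\rho_k)))>\eps$ for some $\ell_k\in\{n,\dots,n+m_\eps\}$, then extracts a constant $n'$ along a subsequence and passes to the limit), whereas you first assemble the reduced balls into an auxiliary $D\in\mathcal{F}^u_\eps$ and then invoke Proposition~\ref{crescimentouniforme} with $\gamma=\eps$.
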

\begin{proof}
If $C\in\mathcal{F}^u$, then $C\in\mathcal{F}^u_{\gamma}$ for some $\gamma\in(0,\eps)$, and, hence, there exist
$x\in X,\;n_k\to\infty$ and $r_{n_k}\to0$ such that
\[C=\lim_{k\rightarrow\infty}f^{n_k}(\overline{B(f^{-n_k}(x),r_{n_k})}) \,\,\,\,\,\, \text{and}\]
\[n_1(f^{-n_k}(x),r_{n_k},\gamma)\in (n_k,n_k+m_\gamma] \,\,\,\,\,\, \text{for every} \,\,\,\,\,\, k\in\N.\] 
As in the proof of the previous proposition, property (F2) assures that
\[n_1(f^{-n_k}(x),r_{n_k},\eps) \in(n_k,n_k+2m_\gamma] \,\,\,\,\,\, \text{for every} \,\,\,\,\,\, k\in\N.\]
For each $n\in\N$ we use property (F1) to reduce, if necessary, for each $k\in\N$ the radius $r_{n_k}$ to $r_{t_k}$ so that 
\[n_{1}(f^{-n_k}(x),r_{t_k},\eps)\in\{n_k+n,\ldots,n_k+n+m_\eps\}.\]
This implies that
\[n_{1}(f^{n_k}(B(f^{-n_k}(x),r_{t_k})),\eps)\in\{n,\ldots,n+m_\eps\} \,\,\,\,\,\, \text{for every} \,\,\,\,\,\, k\in\N\] and consequently, for each $k\in\mathbb{N}$ there is $\ell_k\in\{n,\ldots,n+m_\eps\}$ such that
\[\diam\;(f^{\ell_k}(f^{n_k}(B(f^{-n_k}(x),r_{n_k}))))\geq \diam\;(f^{\ell_k}(f^{n_k}(B(f^{-n_k}(x),r_{t_k}))))>\eps.\] 
Thus, there exists $n'\in\{n,\ldots,n+m_\eps\}$ and an infinite subset $K\subset\N$ such that
\[\diam\;(f^{n'}(f^{n_k}(B(f^{-n_k}(x),r_{n_k}))))>\eps \,\,\,\,\,\, \text{for every} \,\,\,\,\,\, k\in K\] 
and, hence,
$$\diam(f^{n'}(C))=\lim_{k\to\infty}\diam(f^{n'}(f^{n_k}(\overline{B(f^{-n_k}(x),r_{n_k})})))\geq\eps.$$ This completes the proof.
%
\end{proof}
This regularity ensures that the set of the increasing times of a local cw-unstable continuum is syndetic. Recall that a subset $S\subset\mathbb{N}$ is \emph{syndetic} if there is $p(S)\in\mathbb{N}$ such that 
\[\{n,n+1,\ldots, n+p(S)\}\cap S\neq\emptyset \,\,\,\,\,\, \text{for every} \,\,\,\,\,\, n\in\N.\] 
The set of increasing times of a subset $C\subset X$ with respect to a sensitivity constant $c>0$ is the set $$S_{C,c}=\{n\in\mathbb{N}\;;\;\diam(f^{n}(C))\geq c\}.$$

\begin{corollary}\label{tempodecrescimentosindetico}
If $C\in\mathcal{F}^u$, then $S_{C,\eps}$ is syndetic.
\end{corollary}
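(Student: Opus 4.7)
The plan is to read the corollary as an immediate reformulation of Proposition \ref{crescimentoregularcont}. Unpacking the definitions, to say that $S_{C,\eps}$ is syndetic means we need a single constant $p \in \N$ such that every window $\{n, n+1, \ldots, n+p\}$ meets $S_{C,\eps}$, i.e.\ contains some iterate at which $\diam(f^{n'}(C)) \geq \eps$. Proposition \ref{crescimentoregularcont} supplies exactly this: for each $n \in \N$ there is some $n' \in \{n, \ldots, n + m_\eps\}$ with $\diam(f^{n'}(C)) \geq \eps$, so $n' \in S_{C,\eps}$ and the window of length $m_\eps + 1$ starting at $n$ indeed intersects $S_{C,\eps}$.

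Accordingly, I would simply set $p(S_{C,\eps}) := m_\eps$ and verify the syndetic condition from the definition. Because the statement of the corollary does not specify which $\gamma \in (0,\eps]$ the cw-unstable continuum $C$ is associated to, one should start the proof by writing $C \in \mathcal{F}^u_\gamma$ for some $\gamma \in (0,\eps]$, which is the setting in which Proposition \ref{crescimentoregularcont} was invoked in the first place; the resulting constant $m_\eps$ depends only on $\eps$ (the fixed sensitivity constant chosen in the proof of Theorem \ref{teoremacontinuosinst}), not on $\gamma$ nor on $C$, so the syndetic constant is uniform across the whole family $\mathcal{F}^u$.

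There is no real obstacle here: all the work has been done in Proposition \ref{crescimentoregularcont}, where properties (F1) and (F2) were combined to produce the uniform gap $m_\eps$ between consecutive increasing times. The corollary is just the translation of that bound into the standard vocabulary of syndetic sets. The proof will be two or three lines long.
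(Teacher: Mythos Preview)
Your proposal is correct and matches the paper's own proof almost verbatim: the paper simply invokes Proposition~\ref{crescimentoregularcont} to conclude that $\{n,\ldots,n+m_\eps\}\cap S_{C,\eps}\neq\emptyset$ for every $n\in\N$, and records $p(C)=m_\eps$. Your extra remark about writing $C\in\mathcal{F}^u_\gamma$ is harmless but unnecessary, since Proposition~\ref{crescimentoregularcont} is already stated for arbitrary $C\in\mathcal{F}^u$.
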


\begin{proof}
Proposition \ref{crescimentoregularcont} assures that 
\[\{n,n+1,\ldots, n+m_{\eps}\}\cap S_{C,\eps}\neq\emptyset \,\,\,\,\,\, \text{for every} \,\,\,\,\,\, n\in\N,\] that is, $S_{C,\eps}$ is syndetic with $p(C)=m_{\eps}$ for every $C\in\mathcal{F}^u$.
\end{proof}

These results imply that every first-time sensitive homeomorphism is syndetically sensitive. Recall that a homeomorphism $f\colon X\to X$ of a compact metric space $(X,d)$ is \emph{syndetically sensitive} if there exists $c>0$ such
that $S_{U,c}$ is syndetic for every non-empty open subset $U\subset X$.
\begin{corollary}
If $f$ is first-time sensitive, then it is syndetically sensitive.
\end{corollary}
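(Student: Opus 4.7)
The plan is to take $c := \eps$ (the sensitivity constant from the ft-sensitivity data) as the uniform syndetic-sensitivity constant, and to establish that $S_{U,\eps}$ is syndetic for every non-empty open $U \subset X$. The key observation is that property (F1) is already a syndeticity statement in disguise: along the decreasing sequence of radii $r_k(y)$ at a point $y$, the first-increasing times with respect to $\eps$ cannot jump by more than $m_\eps$ between consecutive indices, and this alone yields a syndetic set of times at which any ball centered at $y$ grows past $\eps$.

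Given a non-empty open $U$, I would fix $y \in U$ and $r > 0$ with $B(y,r) \subset U$, then choose $k_0 \in \N$ with $r_{k_0}(y) \leq \min\{r,\eps\}$, which is possible because the $r_k$ decrease monotonically to $0$. For $j \geq k_0$, set $n_j := n_1(y, r_j(y), \eps)$. Three elementary facts drive the proof: (i) $(n_j)_{j\geq k_0}$ is non-decreasing, since a smaller ball has at least as late a first-increasing time; (ii) $n_j \to \infty$, for otherwise a bounded subsequence $n_j = n^* \leq N$ would force $\diam(f^{n^*}(B(y, r_j(y)))) > \eps$ along this subsequence and hence, by uniform continuity of $f^{n^*}$, $\diam(B(y, r_j(y))) \geq \beta > 0$, contradicting $r_j(y) \to 0$; and (iii) by property (F1) applied at $\gamma = \eps$ (which requires exactly $r_j(y) \leq \eps$, guaranteed by our choice of $k_0$), $n_{j+1} - n_j \leq m_\eps$ for every $j \geq k_0$.

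Together (i)--(iii) imply that $T := \{n_j : j \geq k_0\}$ is syndetic in $\N$ with gap $p := \max\{m_\eps, n_{k_0}\}$: for $N \geq n_{k_0}$, taking the largest $j$ with $n_j \leq N$ gives $n_{j+1} \in (N, N+m_\eps]$; for $N < n_{k_0}$, the interval $[N,N+p]$ already contains $n_{k_0}$. By the very definition of $n_j$ and the inclusion $B(y, r_j(y)) \subset U$,
\[\diam(f^{n_j}(U)) \geq \diam(f^{n_j}(B(y, r_j(y)))) > \eps,\]
so $T \subset S_{U,\eps}$ and $S_{U,\eps}$ is syndetic. Since $\eps$ is independent of $U$, $f$ is syndetically sensitive. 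There is no real obstacle here: the argument is essentially the trick from the proof of Proposition \ref{crescimentoregularcont} (using (F1) to shift the first-increasing time by a controlled amount) applied directly to the shrinking balls $B(y, r_j(y)) \subset U$ rather than to the Hausdorff-limit continua in $\mathcal{F}^u$. The only mildly delicate point is fact (ii), which I would handle by the pigeonhole/uniform-continuity argument sketched above.
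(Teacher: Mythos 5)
Your proof is correct, and it is genuinely more direct than the paper's. The paper deduces this corollary from its theory of local cw-unstable continua: given $U$, it picks a continuum $C\in\mathcal{F}^u$ with $C\subset C^u_\gamma(x)\subset U$ (using Theorem \ref{teoremacontinuosinst}), then invokes Proposition \ref{crescimentoregularcont} to conclude that $S_{C,\eps}$ (and hence $S_{U,\eps}\supset S_{C,\eps}$) is syndetic, with the uniform gap $m_\eps$. That proposition in turn uses both (F1) and (F2) and the Hausdorff-limit construction of $\mathcal{F}^u$. You bypass all of this by working directly with the shrinking balls $B(y,r_j(y))\subset U$ and applying (F1) at $\gamma=\eps$ to the sequence $n_j:=n_1(y,r_j(y),\eps)$; the only additional ingredients you need are the elementary monotonicity of $j\mapsto n_j$, and the divergence $n_j\to\infty$, which you correctly obtain from uniform continuity of $f^{n^*}$ and $r_j(y)\to 0$. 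The trade-offs: your argument is shorter, uses only (F1) (not (F2)) and none of the machinery of $\mathcal{F}^u$, so it would also serve as a lightweight standalone proof; the paper's route, though heavier, produces a syndeticity gap $m_\eps$ that is uniform over all open $U$, whereas yours gives $p=\max\{m_\eps,n_{k_0}\}$ with $n_{k_0}$ depending on $U$ -- the definition of syndetic sensitivity does not require uniformity, so this is harmless, but it is a genuine difference in what the two arguments deliver. The paper's choice is natural in context, since Proposition \ref{crescimentoregularcont} and Corollary \ref{tempodecrescimentosindetico} are proved anyway and the corollary is meant to illustrate the strength of the cw-unstable continua; your observation that the syndeticity already follows from (F1) applied to the radii themselves is a nice complement.
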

\begin{proof}
Let $U$ be a non-empty and open subset of $X$, $x\in U$, $\gamma\in(0,\eps)$ be such that $\overline{B(x,2\gamma)}\subset U$, and choose $C\subset\mathcal{F}^u$, given by Theorem \ref{teoremacontinuosinst}, such that $C\subset C^u_{\gamma}(x)$. Since $\diam(C)\leq2\gamma$ and $x\in C$, it follows that $C\subset U$ and this implies that $S_{C,\eps}\subset S_{U,\eps}$.
Proposition \ref{crescimentoregularcont} assures that $S_{C,\eps}$ is syndetic and the previous inclusion assures that $S_{U,\eps}$ is syndetic.
\end{proof}


Another immediate corollary of Proposition \ref{crescimentoregularcont} is that the diameter of future iterations of local cw-unstable continua cannot become arbitrarily small after it reaches size $\eps$.

\begin{corollary}\label{continuonaodecresce}
There exists $\delta>0$ such that if $C\in\mathcal{F}^u_{\gamma}$, then $$\diam(f^{n}(C))\geq\delta \,\,\,\,\,\, \text{for every}  \,\,\,\,\,\, n\geq 2m_\gamma.$$
\end{corollary}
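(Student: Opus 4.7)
The plan is to derive this as a short corollary of Proposition \ref{crescimentoregularcont} combined with uniform continuity of finitely many iterates of $f$. Fix $C\in\mathcal{F}^u_\gamma$ and $n\geq 2m_\gamma$, and apply Proposition \ref{crescimentoregularcont} at time $n$ to obtain some $n'\in\{n,n+1,\ldots,n+m_\eps\}$ with $\diam(f^{n'}(C))\geq\eps$. The crucial observation is that the gap $k:=n'-n$ lies in $\{0,1,\ldots,m_\eps\}$, a finite set depending only on the fixed sensitivity constant $\eps$ (and not on $C$, $\gamma$, or $n$).

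Next, I would invoke uniform continuity: since $X$ is compact and $f$ is a homeomorphism, each of the $m_\eps+1$ maps $f^0,f^1,\ldots,f^{m_\eps}$ is uniformly continuous, so this finite family admits a common modulus of continuity. Concretely, there exists $\delta>0$, depending only on $\eps$ and $m_\eps$, such that for every set $A\subset X$ and every $k\in\{0,1,\ldots,m_\eps\}$,
$$\diam(A)<\delta \,\,\Longrightarrow\,\, \diam(f^k(A))<\eps.$$
Applying this to $A=f^n(C)$ and $k=n'-n$, a strict inequality $\diam(f^n(C))<\delta$ would force $\diam(f^{n'}(C))=\diam(f^{k}(f^n(C)))<\eps$, contradicting the lower bound supplied by Proposition \ref{crescimentoregularcont}. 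Hence $\diam(f^n(C))\geq\delta$, as required.

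Since $\delta$ is produced from $\eps$ and $m_\eps$ alone, the bound is uniform over all $C\in\mathcal{F}^u_\gamma$ and all $\gamma\in(0,\eps]$, which is what the statement asks. I do not anticipate any real obstacle here: the substantive content has already been packed into Proposition \ref{crescimentoregularcont}, and this corollary is essentially a bookkeeping step that uses uniform continuity to transport a lower bound on diameter backward by at most $m_\eps$ iterates. The hypothesis $n\geq 2m_\gamma$ is not actually needed in the argument above, but it is consistent with Proposition \ref{crescimentouniforme}, which guarantees that $C$ has already reached diameter at least $\eps$ by iterate $2m_\gamma$, making this the natural range in which to state uniform non-collapse.
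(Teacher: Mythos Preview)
Your argument is correct and essentially identical to the paper's: both obtain an iterate $n'\in\{n,\ldots,n+m_\eps\}$ with $\diam(f^{n'}(C))\geq\eps$ from Proposition~\ref{crescimentoregularcont} (the paper phrases this via the syndetic Corollary~\ref{tempodecrescimentosindetico}) and then push the diameter bound back from $n'$ to $n$ using uniform continuity of the finitely many maps $f^0,\ldots,f^{m_\eps}$. Your framing is slightly sharper in two respects: you correctly observe that only forward uniform continuity is needed (the paper invokes uniform continuity of $f$ and $f^{-1}$, which is unnecessary since $n'\geq n$), and you note that the hypothesis $n\geq 2m_\gamma$ plays no role in the argument itself.
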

\begin{proof} 
The proof of Corollary \ref{tempodecrescimentosindetico} assures that for each $n\geq2m_{\gamma}$ there exists $m\in S_{C,\eps}$ such that $|m-n|\leq m_\eps$. Let $\delta>0$, given by uniform continuity of $f$ and $f^{-1}$, such that if $\diam(A)\geq\eps$ then $$\diam(f^{k}(A))\geq \delta \,\,\,\,\,\, \text{whenever} \,\,\,\,\,\, |k|\leq m_\eps.$$ Since $\diam(f^m(C))\geq\eps$ and $|m-n|\leq m_\eps$, it follows that $\diam(f^n(C))\geq\delta$.
\end{proof}

This corollary is the version in the case of first-time sensitive homeomorphisms of the following important property of cw-expansive homeomorphisms:

\begin{proposition}\cite[Proposition 2.2]{Kato1}.
There exists $\delta\in (0,\varep)$ such that if $A$ is a subcontinuum of $X$ with $\mbox{diam}(A)\leq\delta$ and \[\mbox{diam}(f^n(A))\geq\varep\;\;\; \mbox{ for some }\;\;\;n\in\mathbb{N},\] then
\[\mbox{diam}(f^j(A))\geq \delta\;\;\;\mbox{ for every }\;\;\;j\geq n.\]
\end{proposition}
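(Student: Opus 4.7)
The plan is a proof by contradiction that sets cw-expansiveness against a suitably constructed non-degenerate continuum with uniformly bounded bi-infinite orbit.

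Suppose no such $\delta$ exists. Then for every $k\in\N$ one can find a subcontinuum $A_k$ with $\diam(A_k)\leq 1/k$, a \emph{minimal} $n_k\in\N$ with $\diam(f^{n_k}(A_k))\geq\varep$, and some $j_k\geq n_k$ with $\diam(f^{j_k}(A_k))<1/k$. Uniform continuity of $f$ and $f^{-1}$ forces $n_k\to\infty$ and $m_k:=j_k-n_k\to\infty$. Writing $B_k:=f^{n_k}(A_k)$, the minimality of $n_k$ yields $\diam(f^{-i}(B_k))<\varep$ for every $i\in[1,n_k]$ and $\diam(f^{-n_k}(B_k))\leq 1/k$, while uniform continuity of $f$ bounds $\diam(B_k)$ from above by a constant independent of $k$. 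A Hausdorff-convergent subsequence $B_k\to B$ in the hyperspace of subcontinua then produces a continuum with $\diam(B)\geq\varep$ and $\diam(f^{-i}(B))\leq\varep$ for every $i\geq 0$; that is, $B$ is already a non-degenerate $\varep$-unstable continuum.

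The main obstacle is to likewise control the forward iterates of the limit. We only know $\diam(f^{m_k}(B_k))<1/k$, and the intermediate diameters on $(0,m_k)$ may a priori be arbitrarily large, so a naive extraction only bounds the forward orbit of $B$ by $\diam(X)$. To overcome this I would refine the construction before passing to the limit: replace $B_k$ by an iterate $f^{s_k}(B_k)$ chosen at a transitional time in $[0,m_k]$ where the diameter is about to drop below $\varep$ for the last time (so that all subsequent iterates, until the final collapse to size $<1/k$, are automatically $\leq\varep$), and if necessary pass to a sub-continuum supplied by local connectedness of $X$. The delicate point is to keep the refined continuum non-degenerate in the limit while simultaneously bounding its full forward orbit by a cw-expansive constant $c$; this is where the argument really uses that $\varep$ is (at most) a cw-expansive constant.

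Once such a limit continuum $H$ is produced, with $\diam(H)>0$ and $\diam(f^i(H))\leq c$ for every $i\in\Z$, pick any $x\in H$: then $H\subset\Gamma_c(x)$, which is totally disconnected by cw-expansiveness. This forces the connected set $H$ to reduce to a single point, contradicting $\diam(H)\geq\varep>0$ and proving the existence of the desired $\delta$.
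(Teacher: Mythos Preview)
The paper does not give its own proof of this statement; it is quoted verbatim from \cite[Proposition~2.2]{Kato1} and used as a black box. So there is no in-paper argument to compare against, and I evaluate your sketch on its own.

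Your overall strategy --- negate, extract a sequence, pass to a Hausdorff limit, and contradict cw-expansiveness --- is correct, and you correctly identify the real difficulty: the limit of the $B_k$ is only $\eps$-unstable, and one must also bound the \emph{forward} orbit of some refined continuum. The gap is in your proposed refinement. Moving to the last time $s_k\in[0,m_k]$ with $\diam(f^{s_k}(B_k))\geq\eps$ does control forward iterates on $[1,m_k-s_k]$, but it \emph{destroys} the backward control you had: the iterates $f^{s_k-i}(B_k)$ for $i\in[1,s_k]$ lie strictly between $B_k$ and $f^{s_k}(B_k)$, where nothing is known, and if $s_k\to\infty$ this window is fatal. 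Passing to a subcontinuum of $f^{s_k}(B_k)$ cannot repair it, since shrinking a continuum at time $s_k$ gives no bound on its preimages at earlier times. Local connectedness of $X$ plays no role here.

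The clean fix (essentially Kato's) is to shrink \emph{before} iterating rather than after. Fix $x\in A_k$ and take an order arc in the hyperspace $\mathcal{C}(A_k)$ from $\{x\}$ to $A_k$ (these exist in any continuum; see \cite{Nadler}). The map $A'\mapsto\max_{0\leq i\leq j_k}\diam(f^i(A'))$ is continuous along this arc, vanishes at $\{x\}$, and is $\geq\eps$ at $A_k$; by the intermediate value theorem there is $A'_k\subset A_k$ with $\max_{0\leq i\leq j_k}\diam(f^i(A'_k))=\eps$. Let $t_k$ realize this maximum. Then $\diam(f^{t_k}(A'_k))=\eps$ while $\diam(f^i(A'_k))\leq\eps$ for every $i\in[0,j_k]$, and uniform continuity of $f^{\pm1}$ together with $\diam(A'_k)\leq 1/k$ and $\diam(f^{j_k}(A'_k))\leq 1/k$ forces both $t_k\to\infty$ and $j_k-t_k\to\infty$. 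Any Hausdorff accumulation point of $f^{t_k}(A'_k)$ is then a continuum of diameter $\eps$ whose entire orbit has diameter $\leq\eps$, contradicting cw-expansiveness.
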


In the last result of this subsection we prove that local cw-unstable continuum are (global) unstable. We also recall that in the case of cw-expansive homeomorphisms, local stable and local unstable continua are respectively stable and unstable (see \cite{Kato1}).

\begin{prop}
If $C\in\mathcal{F}^u$, then $\lim_{n\rightarrow\infty}\diam(f^{-n}(C))=0$.
\end{prop}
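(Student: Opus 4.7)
My plan is to exploit property (F2) at a refined sensitivity scale $\eta$ strictly smaller than $\gamma$ in order to show that the balls defining $C$ still have diameter at most $\eta$ at iteration $n_k - n$ once $n$ is larger than a constant depending only on $\eta$ and $\gamma$. Since
\[\diam(f^{-n}(C)) = \lim_{k \to \infty} \diam\!\left(f^{n_k - n}\!\left(\overline{B(f^{-n_k}(x), r_{n_k})}\right)\right)\]
by continuity of the diameter under Hausdorff limits of compact sets, such a uniform-in-$k$ upper bound immediately yields $\diam(f^{-n}(C)) \leq \eta$, and then letting $\eta \to 0$ finishes the proof.

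The key step is to apply (F2) twice, once with the constant $\gamma$ and once with a chosen $\eta \in (0, \gamma]$. Both applications are legal for $k$ large, because $r_{n_k} \to 0$ ensures $r_{n_k} \leq \eta$. Using $\eps$ as a pivot via the triangle inequality,
\[\left|n_1(f^{-n_k}(x), r_{n_k}, \eta) - n_1(f^{-n_k}(x), r_{n_k}, \gamma)\right| \leq m_\gamma + m_\eta,\]
and combined with the defining condition $n_1(f^{-n_k}(x), r_{n_k}, \gamma) > n_k$ this yields $n_1(f^{-n_k}(x), r_{n_k}, \eta) > n_k - m_\gamma - m_\eta$. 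In words, the ball $B(f^{-n_k}(x), r_{n_k})$ has not yet expanded past $\eta$ at iteration $n_k - m_\gamma - m_\eta$.

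To conclude, fix $n \geq m_\gamma + m_\eta$. Then for every $k$ large the integer $n_k - n$ lies strictly below $n_1(f^{-n_k}(x), r_{n_k}, \eta)$, so the very definition of the first increasing time forces $\diam(f^{n_k - n}(B(f^{-n_k}(x), r_{n_k}))) \leq \eta$ uniformly in $k$; passing to the Hausdorff limit gives $\diam(f^{-n}(C)) \leq \eta$. The delicate point I expect is verifying that (F2) genuinely applies to the radii $r_{n_k}$ appearing in the definition of $\mathcal{F}^u$, that is, that these are of the form $r_{j_k}(f^{-n_k}(x))$ coming from the ft-sensitivity sequence (as they are in the construction behind Theorem \ref{teoremacontinuosinst}) and that $r_{n_k} \leq \eta$ eventually; once this compatibility is in place, the remainder is a straightforward bookkeeping of first increasing times.
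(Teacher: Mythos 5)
Your proof is correct and follows essentially the same strategy as the paper: use (F2) to show that $n_1(f^{-n_k}(x), r_{n_k}, \alpha)$ lags behind $n_k$ by at most a constant depending only on $\alpha$, so that for fixed $n$ past that constant, $f^{n_k-n}$ applied to the defining balls has diameter at most $\alpha$, and then pass to the Hausdorff limit. The only (cosmetic) difference is in how the bound on $n_1(\cdot,\eta)$ is extracted: you apply (F2) twice and combine by the triangle inequality through $\eps$, arriving at the threshold $m_\gamma + m_\eta$, whereas the paper uses the monotonicity $n_1(\cdot,\gamma)\leq n_1(\cdot,\eps)$ to chain $n_k < n_1(\cdot,\eps)$ directly and then needs only one application of (F2) with the small constant, giving the sharper threshold $m_\alpha$ alone. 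Your concern at the end about whether (F2) genuinely applies to the radii in the definition of $\mathcal{F}^u$ is well placed; the paper treats this implicitly (writing $r_{n_k}(x)$ in the analogous steps of Propositions~\ref{crescimentouniforme}--\ref{crescimentoregularcont}), so the $r_{n_k}$ are indeed meant to come from the ft-sensitivity sequence evaluated at $f^{-n_k}(x)$, making the application of (F2) legitimate once $r_{n_k}\leq\eta$ for $k$ large.
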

\begin{proof}
If $C\in\mathcal{F}^u_\gamma$, then there exist $x\in X,\;n_k\to\infty$ and $r_{n_k}\to0$ such that
\[C=\lim_{k\rightarrow\infty}f^{n_k}(\overline{B(f^{-n_k}(x),r_{n_k})}) \,\,\,\,\,\, \text{and}\]
\[n_1(f^{-n_k}(x),r_{n_k},\gamma)\in (n_k,n_k+m_\gamma] \,\,\,\,\,\, \text{for every} \,\,\,\,\,\, k\in\N.\] 
It is enough to prove that for each $\alpha\in(0,\gamma)$ there exists $\ell_\alpha\in\mathbb{N}$ such that
\[\diam(f^{-n}(C))\leq \alpha \,\,\,\,\,\, \text{for every} \,\,\,\,\,\, n\geq \ell_\alpha.\] 
Since 
\[n_k<n_1(f^{-n_k}(x),r_{n_k},\gamma)\leq n_1(f^{-n_k}(x),r_{n_k},\eps),\]
it follows from property (F2) that
\[n_k - n_{1}(x_{n_k},r_{n_k},\alpha)< n_{1}(f^{-n_k}(x),r_{n_k},\eps)-n_{1}(f^{-n_k}(x),r_{n_k},\alpha)\leq m_\alpha.
\] Let $\ell_\alpha=m_\alpha+1$ and note that if $n\geq \ell_\alpha$, then the previous inequality assures that
\[ n_k-n_{1}(f^{-n_k}(x),r_{n_k},\alpha)< n.\] For each $n\geq \ell_\alpha$ consider $k_n\in\mathbb{N}$ such that
\[n\leq n_k \,\,\,\,\,\, \text{for every} \,\,\,\,\,\, k\geq k_n,\] recall that $\lim_{k\rightarrow\infty}n_k=\infty$. 
This implies that 
\[0\leq n_k-n< n_{1}(f^{-n_k}(x),r_{n_k},\alpha) \,\,\,\,\,\, \text{for every} \,\,\,\,\,\, k\geq k_n\]
and, hence,
\[\diam(f^{-n}(f^{n_k}(B(f^{-n_k}(x),r_{n_k})))) =  \diam(f^{n_k-n}(B(f^{-n_k}(x),r_{n_k})))\leq \alpha\] for every $k\geq k_n$. This assures that \[\lim_{k\rightarrow\infty}\diam(f^{-n}(f^{n_k}(B(f^{-n_k}(x),r_{n_k}))))\leq\alpha\] for every $n\geq \ell_\alpha$. Therefore 
\begin{eqnarray*}
\diam(f^{-n}(C))&=&\diam \left(f^{-n}\left(\lim_{k\rightarrow\infty}f^{n_k}(\overline{B(f^{-n_k}(x),r_{n_k})})\right)\right)\\
&=&\lim_{k\rightarrow\infty}\diam\left(f^{-n}(f^{n_k}(B(f^{-n_k}(x),r_{n_k})))\right)\\
&\leq&\alpha
\end{eqnarray*}
for every $n\geq\ell_\alpha$, which finishes the proof.
\end{proof}

At the end of this section we note that in the definition on first-time sensitivity nothing is said about the map $\gamma\mapsto m_{\gamma}$. In the following proposition we choose the numbers $m_{\gamma}$ satisfying (F1) and (F2) and such that $\gamma\mapsto m_{\gamma}$ is a non-increasing function. This will be used later in Section 4.

\begin{proposition}\label{decreasing}
If $f$ is a first-time sensitive homeomorphism with a sensitivity constant $\eps>0$, then for each $\gamma\in(0,\eps)$, there exists $m_{\gamma}>0$ satisfying (F1), (F2), and: if $\delta<\gamma$, then $m_{\gamma}\leq m_{\delta}$.
\end{proposition}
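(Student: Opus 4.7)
The plan is to exhibit enough flexibility in the choice of $m_\gamma$ to enforce monotonicity. The core idea is that a valid constant at a small $\gamma_2$ can be inflated to a valid constant at any larger $\gamma_1\in[\gamma_2,\eps)$, so a piecewise definition along a sequence $\gamma_n\searrow 0$ with running maxima will yield the required non-increasing family.

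\emph{Key lemma.} If $\gamma_2<\gamma_1<\eps$ and $m_{\gamma_2}$ is valid for (F1), (F2) at $\gamma_2$, then $C(\gamma_2):=\max\{2m_{\gamma_2},\,N(\gamma_2)+m_\eps\}$ is valid at $\gamma_1$, where $N(\gamma_2)$ is given by Lemma~\ref{n1} applied with radius $\gamma_2$. To prove this, I would fix $(x,k)$ with $r_k(x)\leq\gamma_1$ and split into two cases. If $r_k(x)\leq\gamma_2$, the monotonicity $n_1(x,r,\gamma_2)\leq n_1(x,r,\gamma_1)\leq n_1(x,r,\eps)$ combined with (F2) at $\gamma_2$ sandwiches $n_1(x,r,\gamma_1)$ in an interval of length $m_{\gamma_2}$ around $n_1(x,r,\gamma_2)$; together with (F1) at $\gamma_2$ this yields (F1)/(F2) at $\gamma_1$ with constant $2m_{\gamma_2}$. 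If $\gamma_2<r_k(x)\leq\gamma_1$, then $B(x,r_k(x))\supset B(x,\gamma_2)$, so Lemma~\ref{n1} gives $n_1(x,r_k(x),\eps)\leq N(\gamma_2)$; next, (F1) at $\eps$ (which applies since $r_k(x)<\eps$) propagates this to $n_1(x,r_{k+1}(x),\eps)\leq N(\gamma_2)+m_\eps$, and since $n_1(\cdot,\cdot,\gamma_1)\leq n_1(\cdot,\cdot,\eps)$, both first increasing times lie in $[0,N(\gamma_2)+m_\eps]$, giving (F1)/(F2) at $\gamma_1$ with that bound.

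With the key lemma in hand, I would fix any strictly decreasing sequence $(\gamma_n)_{n\geq 0}$ in $(0,\eps)$ with $\gamma_n\to 0$, set $C_n:=C(\gamma_n)$, and let $\tilde{C}_n:=\max\{C_0,\ldots,C_n\}$. For $\gamma\in[\gamma_n,\gamma_{n-1})$, with convention $\gamma_{-1}:=\eps$, define $m_\gamma:=\tilde{C}_n$. By the key lemma, $m_\gamma\geq C_n$ is valid at $\gamma$, so (F1), (F2) hold; and since the index $n$ grows as $\gamma$ decreases while $\tilde{C}_n$ is non-decreasing in $n$, the map $\gamma\mapsto m_\gamma$ is non-increasing, so $\delta<\gamma$ implies $m_\gamma\leq m_\delta$. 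The main obstacle is the second sub-case of the key lemma, where we have no direct information from (F1)/(F2) at $\gamma_2$; the resolution is that the lower bound $r_k(x)>\gamma_2$ is exactly what is needed to invoke Lemma~\ref{n1} and obtain uniform a priori bounds on the relevant first increasing times, which (F1) at $\eps$ then carries across to $r_{k+1}(x)$.
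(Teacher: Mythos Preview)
Your approach is essentially the same as the paper's: both observe that a constant valid at a small scale $\delta$ can be inflated to a valid constant at any larger scale $\gamma$ via the sandwich $n_1(\cdot,\delta)\leq n_1(\cdot,\gamma)\leq n_1(\cdot,\eps)$ together with (F2) at $\delta$, and then both define $m_\gamma$ piecewise-constant along a sequence decreasing to $0$ using running maxima. The paper carries out the sandwich-plus-triangle-inequality argument directly and obtains the inflated constant $3m'_\delta$ without a case split; your Case~1 is exactly this step (though the (F1) bound should read $3m_{\gamma_2}$ rather than $2m_{\gamma_2}$, by the same triangle inequality). Your Case~2, invoking Lemma~\ref{n1} and (F1) at $\eps$ to handle the finitely many indices with $\gamma_2<r_k(x)\leq\gamma_1$, is extra care that the paper's argument does not spell out explicitly---it is correct, but not a genuinely different route.
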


\begin{proof}
For each $\gamma\in(0,\eps)$ consider $m'_{\gamma}>0$, given by the definition of first-time sensitivity, satisfying
\begin{enumerate}
 \item[(F1)] $|n_1(x,r_{k+1}(x),\gamma)-n_1(x,r_{k}(x),\gamma)|\leq m'_\gamma$
    \item[(F2)] $|n_1(x,r_k(x),\gamma) - n_1(x,r_k(x),\eps)|\leq m'_\gamma$
\end{enumerate} 
for every $x\in X$ and for every $k\in\N$ such that $r_k(x)\leq\gamma$. We first prove that if $\delta\leq\gamma$, then $3m'_{\delta}$ also bounds the differences above. Indeed, if $\delta\leq\gamma$, then $$n_1(x,r_k(x),\gamma)\geq n_1(x,r_k(x),\delta)$$ 
and, hence,
\begin{equation}\label{rmk1}
|n_1(x,r_k(x),\eps)-n_1(x,r_k(x),\gamma)|\leq |n_1(x,r_k(x),\eps)-n_1(x,r_k(x),\delta)|\leq m'_\delta, \end{equation}
where the second inequality is ensured by (F2).
Also, using triangular inequality, (F1), (F2), and (\ref{rmk1}) we obtain:
\begin{equation*}\label{rmk2}\begin{array}{rcl}
& &|n_1(x,r_{k+1}(x),\gamma)-n_1(x,r_{k+1}(x),\delta)| +\\
|n_1(x,r_{k+1}(x),\gamma)-n_1(x,r_k(x),\gamma)|&\leq&|n_1(x,r_{k+1}(x),\delta)-n_1(x,r_k(x),\delta)|+\\
  &&|n_1(x,r_k(x),\delta)-n_1(x,r_k(x),\gamma)|\\
      &&\\
      &&|n_1(x,r_{k+1}(x),\eps)-n_1(x,r_{k+1}(x),\delta)| +\\
      &\leq &|n_1(x,r_{k+1}(x),\delta)-n_1(x,r_k(x),\delta)|+\\
      &&|n_1(x,r_k(x),\delta)-n_1(x,r_k(x),\eps)|\\
      &&\\
      &\leq &3m'_\delta.
    \end{array}\end{equation*}
To define $(m_\gamma)_{\gamma\in(0,\eps)}$, we define a sequence $(m_n)_{n\in\N}$ as follows: let $m_1=3m'_{\frac{\eps}{2}}$, and inductively define for each $n\geq2$,
$$m_n=\max\{3m'_{\frac{\eps}{n}},m_{n-1}+1\}.$$
For each $n\geq 2$ and $\gamma\in \left[\frac{\eps}{n},\frac{\eps}{n-1}\right)$, let $m_\gamma=m_n$. Since the sequence $(m_n)_{n\in\mathbb{N}}$ is increasing, it follows that $\gamma\mapsto m_{\gamma}$ is non-increasing.
Finally, we prove that for each $\gamma\in(0,\eps)$, $m_\gamma$ satisfies (F1) and (F2). Given $\gamma\in(0,\eps]$, there is $n\geq2$ such that  $\gamma\in\left[\frac{\eps}{n},\frac{\eps}{n-1}\right)$. Since $\frac{\eps}{n}\leq\gamma$, it follows that
$$|n_1(x,r_{k+1}(x),\gamma)-n_1(x,r_k(x),\gamma)|\leq 3m'_{\frac{\eps}{n}}\leq m_n=m_\gamma \,\,\,\,\,\, \text{and}$$ 
$$|n_1(x,r_k(x),\eps)-n_1(x,r_k(x),\gamma)|\leq m'_{\frac{\eps}{n}}\leq m_n=m_\gamma,$$
for every $x\in X$ and for every $k\in\N$ such that $r_k(x)\leq\gamma$.
\end{proof}


\section{Examples of first-time sensitive homeomorphisms}

In this section we discuss three distinct classes of systems satisfying first-time sensitivity. They are the continuum-wise expansive homeomorphisms, the shift map on the Hilbert cube $[0,1]^{\Z}$, and some partially hyperbolic diffeomorphisms. We will discuss them on three separate subsections.

\vspace{+0.5cm}

\hspace{-0.4cm}\textbf{Continuum-wise expansive homeomorphisms:}

\vspace{+0.5cm}





\hspace{-0.5cm} We start this subsection recalling the definition of cw-expansiveness.

\begin{definition}
We say that $f$ is \emph{continuum-wise expansive} if there exists $c>0$ such that $W^u_c(x)\cap W^s_c(x)$ is totally disconnected for every $x\in X$. Equivalently, for each non-trivial continuum $C\subset X$, that is $C$ is not a singleton, there exists $n\in\mathbb{Z}$ such that
\[\diam(f^n(C))>c.\] The number $c>0$ is called a cw-expansivity constant of $f$.
\end{definition}
We will prove that cw-expansiveness implies ft-sensitivity on spaces satisfying (P1) and (P2). To prove this we will need the following lemma that obtains further consequences on the first increasing times of sensitive homeomorphisms defined on spaces satisfying hypothesis (P2).

\begin{lemma}\label{sequenceonlysensitive}
	If $f\colon X\to X$ is sensitive, with a sensitivity constant $\eps>0$, and $X$ satisfies hypothesis \emph{(P2)}, then there is a sequence $(r_k)_{k\in\mathbb{N}}\colon X\to \R^*_+$ starting on $r_1=\frac{\eps}{2}$ and decreasing monotonically to $0$ such that $(n_1(x,r_k(x),\eps))_{k\in\mathbb{N}}$ is strictly increasing and
	\[\diam(f^{n_1(x,r_k(x),\eps)}(B(x,r_{k+1}(x))))=\eps \,\,\,\,\,\, \text{for every} \,\,\,\,\,\, x\in X \,\,\,\,\,\, \text{and} \,\,\,\,\,\, k\in\N.\]
\end{lemma}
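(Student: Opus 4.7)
The plan is to build the sequence pointwise by induction, using the intermediate value theorem to pick at each stage the largest radius at which the relevant iterate of the ball has diameter exactly $\eps$. Set $r_1(x) \equiv \eps/2$. Assuming $r_k$ is defined, let $n_k = n_1(x,r_k(x),\eps)$, which is finite by sensitivity, and consider the auxiliary function
\[
g_k(s) := \diam\bigl(f^{n_k}(\overline{B(x,s)})\bigr), \qquad s \in [0,r_k(x)].
\]
Hypothesis (P2), composed with the continuous map $f^{n_k}$ and the continuous diameter functional on Hausdorff-continua, makes $g_k$ a continuous function of $s$. Monotonicity of balls in $s$ makes $g_k$ non-decreasing, with $g_k(0)=0$ and $g_k(r_k(x))>\eps$ by the definition of $n_k$. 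I would then set
\[
r_{k+1}(x) := \sup\{s\in[0,r_k(x)] : g_k(s)\leq\eps\},
\]
which lies strictly in $(0,r_k(x))$ and, by continuity, satisfies $g_k(r_{k+1}(x))=\eps$. Because $f^{n_k}$ is a homeomorphism, it commutes with closure, so $\diam(f^{n_k}(B(x,r_{k+1}(x))))=\diam(f^{n_k}(\overline{B(x,r_{k+1}(x))}))=\eps$, giving the required equality.

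Next, the strict increase of $(n_k)$ is automatic from this construction: the diameter at iterate $n_k$ of the smaller ball $B(x,r_{k+1}(x))$ equals $\eps$, so it is not strictly larger than $\eps$, forcing the first increasing time $n_1(x,r_{k+1}(x),\eps)$ to exceed $n_k$.

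The step I expect to be the main obstacle is establishing pointwise convergence $r_k(x)\to 0$, since the inductive recipe only yields strict decrease. I would argue by contradiction: suppose $r_k(x)\downarrow r^*(x)>0$ at some point $x$. Since $r_{k+1}(x)<r_k(x)$ strictly, one has $B(x,r^*(x))\subsetneq B(x,r_k(x))$ for every $k$. Now apply sensitivity to the (nontrivial) ball $B(x,r^*(x))$: there is some $N\in\N$ with $\diam(f^N(B(x,r^*(x))))>\eps$. Monotonicity of the diameter in the radius then gives $\diam(f^N(B(x,r_k(x))))>\eps$ for every $k$, whence $n_k\le N$ for all $k$. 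This contradicts the strict increase of $(n_k)$ to infinity, so $r_k(x)\to 0$, as required.
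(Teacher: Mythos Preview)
Your proof is correct and follows essentially the same approach as the paper: both construct $r_{k+1}(x)$ by an intermediate-value argument applied to the continuous map $s\mapsto\diam(f^{n_k}(\overline{B(x,s)}))$ coming from (P2), and both derive strict increase of $n_k$ from the fact that the smaller ball has diameter exactly $\eps$ (not strictly larger) at iterate $n_k$. The only cosmetic differences are that you make the IVT step explicit via the auxiliary function $g_k$ and the $\sup$ choice, and you fold the convergence $r_k(x)\to 0$ into the proof itself, whereas the paper defers that argument to a separate remark immediately following the lemma.
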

\begin{proof}
For each $x\in X$, let $r_1(x)=\frac{\eps}{2}$ and note that the continuity of $f^{n_1(x,r_1(x),\eps)}$ and hypothesis (P2) assure that if $r$ is sufficiently close to $r_1$, then \[\diam(f^{n_1(x,r_1(x),\eps)}(B(x,r)))>\eps.\] Also, if $r$ is sufficiently small, then uniform continuity of $f$ assures that \[\diam(f^{n_1(x,r_1(x),\eps)}(B(x,r)))<\eps.\] It follows from the hypothesis (P2) that there exists $r_2(x)\in(0,r_1(x))$ such that \[\diam(f^{n_1(x,r_1(x),\eps)}(B(x,r_2(x))))=\eps.\] The first increasing time $n_1(x,r_2(x),\eps)$ of $B(x,r_2(x))$ with respect to $\eps$ satisfies
	\[\mbox{diam}(f^{n_1(x,r_2(x),\eps)}(B(x,r_2(x))))>\varepsilon \,\,\,\,\,\, \text{and}\]
	\[\mbox{diam}(f^j(B(x,r_2(x))))\leq\varepsilon \,\,\,\,\,\, \text{for every} \,\,\,\,\,\, j\in\{0,\dots,n_1(x,r_2(x),\eps)-1\}.\] This implies that $n_1(x,r_2(x),\eps)>n_1(x,r_1(x),\eps)$ since 
	\[\diam(f^j(B(x,r_2(x),\eps)))\leq\eps \,\,\,\,\,\, \text{for every} \,\,\,\,\,\, j\in\{0,\dots,n_1(x,r_1(x),\eps)\}\] and $\diam(f^{n_1(x,r_2(x),\eps)}(B(x,r_2(x))))>\varepsilon$.
	By induction we can define a decreasing sequence of real numbers $(r_k(x))_{k\in\N}$ such that $(n_1(x,r_k(x),\eps))_{k\in\mathbb{N}}$ is an increasing sequence of positive integer numbers
	and that \[\diam(f^{n_1(x,r_k(x))}(B(x,r_{k+1}(x))))=\eps \,\,\,\,\,\, \text{for every} \,\,\,\,\,\, k\in\N.\] Since this can be done for every $x\in X$, the proof is complete.
\end{proof}

\begin{remark}\label{0}
We note that if $(n_1(x,r_k(x),\eps))_{k\in\N}$ is strictly increasing, then \[ \lim_{k\rightarrow\infty}r_k(x)=0.\] Indeed, if this is not the case, there exists $r>0$ and a subsequence $(r_{k_n})_{n\in\N}$ such that $k_n\to\infty$ and
\[r_{k_n}(x)>r \,\,\,\,\,\, \text{for every} \,\,\,\,\,\, n\in\N.\] Thus,
\[n_1(x,r_{k_n},\eps)\leq n_1(x,r,\eps) \,\,\,\,\,\, \text{for every} \,\,\,\,\,\, n\in\N\]
and this implies that the subsequence $(n_1(x,r_{k_n}(x),\eps))_{n\in\mathbb{N}}$ is bounded. But this contradicts the hypothesis of $(n_1(x,r_k(x)))_{k\in\N}$ being strictly increasing since this implies that $\lim_{k\to\infty}n_1(x,r_k(x))=\infty$.
\end{remark}

\begin{theorem}\label{cwft}
Cw-expansive homeomorphisms defined in compact and connected metric spaces satisfying hypothesis (P1) and (P2) are first-time sensitive.
\end{theorem}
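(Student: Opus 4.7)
My plan is to take the sequence of radii $(r_k)_{k\in\N}$ furnished by Lemma \ref{sequenceonlysensitive} (using that on a compact connected space satisfying (P1) and (P2), any cw-expansive homeomorphism is sensitive, so a sensitivity constant $\eps \leq c$ is available, where $c$ denotes the cw-expansivity constant) and verify conditions (F1) and (F2) for it. The technical heart of the argument will be a \emph{uniform cw-expansive lemma}: for each $\gamma \in (0,\eps]$ there exists $N_\gamma \in \N$ such that every continuum $A \subset X$ with $\diam(A) \geq \gamma$ and $\diam(f^{-j}(A)) \leq \eps$ for all $j \in \{0,1,\ldots,N_\gamma\}$ satisfies $\diam(f^n(A)) > \eps$ for some $n \in \{0,1,\ldots,N_\gamma\}$. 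This is a compactness argument: if it failed, a Hausdorff accumulation of counterexamples would be a non-trivial continuum with every iterate of diameter at most $\eps \leq c$, hence contained in the dynamical ball $W^u_c(y) \cap W^s_c(y)$ of each of its points, contradicting cw-expansivity.

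To establish (F2), fix $\gamma \in (0,\eps]$, $x \in X$ and $k \in \N$ with $r_k(x) \leq \gamma$, and write $b_k = n_1(x,r_k(x),\gamma)$ and $a_k = n_1(x,r_k(x),\eps)$, so $b_k \leq a_k$. Let $C_k = f^{b_k}(\overline{B(x,r_k(x))})$, which is a continuum of diameter at least $\gamma$ (here (P1) is used to ensure the ball is connected). The definition of $b_k$ together with $b_k \leq a_k$ gives $\diam(f^{-j}(C_k)) \leq \eps$ for every $j \in \{0,\ldots,b_k\}$. When $b_k \geq N_\gamma$ the uniform lemma yields some $n \in \{0,\ldots,N_\gamma\}$ with $\diam(f^n(C_k)) > \eps$, whence $a_k - b_k \leq N_\gamma$. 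When $b_k < N_\gamma$, the inequality $\diam(f^{b_k}(B(x,r_k(x)))) > \gamma$ combined with uniform continuity of $f^{b_k}$ produces a lower bound $r_k(x) > \rho(\gamma) > 0$ depending only on $\gamma$, and then Lemma \ref{n1} bounds $a_k \leq n_1(x,\rho(\gamma),\eps) \leq N_{\rho(\gamma)}$ uniformly in $x$. Taking $m_\gamma$ to be the maximum of $N_\gamma$ and $N_{\rho(\gamma)}$ establishes (F2).

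Property (F1) then follows almost for free. The extra equality $\diam(f^{a_k}(B(x,r_{k+1}(x)))) = \eps$ supplied by Lemma \ref{sequenceonlysensitive} implies, when $\gamma < \eps$, that $n_1(x,r_{k+1}(x),\gamma) \leq a_k$, so $n_1(x,r_{k+1}(x),\gamma) - n_1(x,r_k(x),\gamma) \leq a_k - b_k \leq m_\gamma$. The boundary case $\gamma = \eps$ (for which (F2) is trivial) needs a separate argument: I would apply the uniform cw-expansive lemma to the continuum $f^{a_k}(\overline{B(x,r_{k+1}(x))})$, whose diameter equals $\eps$ and whose backward iterates out to time $a_k$ are all bounded by $\eps$, to obtain $a_{k+1} - a_k \leq N_\eps$ when $a_k$ is large, and to close off the small-$a_k$ regime by the same uniform continuity plus Lemma \ref{n1} argument.

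The main obstacle I anticipate is the case split on the size of $b_k$ in the proof of (F2): the uniform cw-expansive lemma engages only after enough backward-controlled iterates have accumulated, so the complementary regime where $b_k$ is small must be closed off by combining uniform continuity with Lemma \ref{n1} to prevent the radii $r_k(x)$ from collapsing uncontrollably. A secondary subtlety is ensuring that the Hausdorff accumulation continuum used in the uniform lemma really is a non-degenerate continuum of diameter at least $\gamma$, which requires standard but careful use of hyperspace compactness together with (P2).
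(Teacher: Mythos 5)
Your proof is correct, and its route through the key steps is genuinely different from (and in one place cleaner than) the paper's. For (F2) the two arguments are essentially the same compactness idea: the paper directly assumes (F2) fails and extracts a non-degenerate continuum with all iterates bounded by $\eps$ from a Hausdorff accumulation of the sets $f^{n_1(x_m,r_{k_m},\gamma)}(\overline{B(x_m,r_{k_m})})$, while you package that compactness into a reusable uniform cw-expansivity lemma and then feed $C_k=f^{b_k}(\overline{B(x,r_k(x))})$ into it, handling the small-$b_k$ regime separately via uniform continuity and Lemma~\ref{n1}. The paper also has to push $r_{k_m}(x_m)\to 0$ through Lemma~\ref{n1}, so the technical content is parallel; your version is just organized as a standalone lemma. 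Where you genuinely diverge is (F1): the paper runs a second contradiction argument of its own, first reducing (F1) at level $\gamma$ to (F1) at level $\eps$ via the already-proved (F2), and then invoking Kato's Proposition~2.2 to lower-bound the diameter of the auxiliary continua $f^{n_1+\ell_m}(\overline{B(x_m,r_{k_m+1})})$. You avoid Proposition~2.2 altogether by exploiting the equality $\diam\bigl(f^{n_1(x,r_k(x),\eps)}(B(x,r_{k+1}(x)))\bigr)=\eps$ built into Lemma~\ref{sequenceonlysensitive}: for $\gamma<\eps$ it immediately gives $n_1(x,r_{k+1}(x),\gamma)\leq n_1(x,r_k(x),\eps)$, so (F1) falls out of (F2) with no new compactness argument, and only the boundary case $\gamma=\eps$ needs your uniform lemma (where the observation that $n$ cannot be $0$ because $\diam(f^{a_k}(\overline{B(x,r_{k+1})}))=\eps$ is exactly what makes the bound $a_{k+1}-a_k\leq N_\eps$ nontrivial). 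This is a real simplification of the paper's (F1) step, at the modest cost of having to state and prove the uniform lemma separately. Two small points worth writing out if you formalize this: when $b_k<N_\gamma$, the uniform-continuity argument yields $\diam(B(x,r_k(x)))\geq\rho(\gamma)$, and you then want $r_k(x)\geq\rho(\gamma)/2$ (using $\diam(B(x,r))\leq 2r$) before invoking Lemma~\ref{n1} with the fixed radius $\rho(\gamma)/2$; and in the uniform lemma's conclusion one should note that $n\geq 1$ whenever $\diam(A)\leq\eps$, which is what is actually used in the $\gamma=\eps$ case.
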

\begin{proof}
First, note that, since $X$ satisfies Property (P1), then it is locally connected and, in particular, a Peano continuum. Every cw-expansive homeomorphism defined on a Peano continuum is sensitive. This is a consequence of \cite[Theorem 1.1]{Hertz}, where it is proved that cw-expansive homeomorphisms defined on a Peano continuum do not have stable points, that are points $x\in X$ satisfying: for each $\varep>0$ there exists $\delta>0$ such that
\[B(x,\delta)\subset W^s_\varep(x).\] Thus, we have that $f$ is sensitive and consider $\varep>0$ a sensitivity constant of $f$. Let $(r_k)_{k\in\N}$ be the sequence given by Lemma \ref{sequenceonlysensitive} such that $(n_1(x,r_k(x),\eps))_{k\in\mathbb{N}}$ is strictly increasing and
	\[\diam(f^{n_1(x,r_k(x),\eps)}(B(x,r_{k+1}(x))))=\eps \,\,\,\,\,\, \text{for every} \,\,\,\,\,\, x\in X \,\,\,\,\,\, \text{and} \,\,\,\,\,\, k\in\N.\]
We will first prove property (F2) of the definition of first-time sensitivity. Suppose that (F2) is not valid, that is, for some constant $\gamma\in (0,\varep]$, there are sequences $(x_m)_{m\in\mathbb{N}}\subset X$ and $(k_m)_{m\in\mathbb{N}}\subset\mathbb{N}$ such that $k_m\to\infty$ when $m\to\infty$ and
\[\lim_{m\rightarrow\infty}(n_1(x_m,r_{k_m}(x_m),\eps) - n_1(x_m,r_{k_m}(x_m),\gamma))=\infty.\] We can assume that 
$$n_1(x_m, r_{k_m}(x_m),\gamma)<n_1(x_m,r_{k_m}(x_m),\varep) \,\,\,\,\,\,\, \text{for every} \,\,\,\,\,\, m\in\mathbb{N}$$ and, hence, that \[\gamma<\diam\;f^{n_1(x_m,r_{k_m}(x_m),\gamma)}(B(x_m,r_{k_m}(x_m)))\leq \varep \,\,\,\,\,\,\, \text{for every} \,\,\,\,\,\, m\in\mathbb{N}.\] For each $m\in\mathbb{N}$, the continuum \[C_m = f^{n_1(x_m,r_{k_m}(x_m),\gamma)}(\overline{B(x_m,r_{k_m}(x_m)))}.\] 
satisfies to following conditions:
\begin{enumerate}
\item $\diam(C_m)\geq\gamma$;
\item $\diam(f^{-j}(C_m))\leq \varep, \,\,\, \forall \, j\in[0,n_1(x_m,r_{k_m}(x_m),\gamma)]$; 
\item $\diam(f^{j}(C_m))\leq \varep, \,\,\, \forall \, j\in[0,n_1(x_m,r_{k_m}(x_m),\eps)-n_1(x_m,r_{k_m}(x_m),\gamma)-1]$.
\end{enumerate}
Let $C$ be an accumulation continuum of the sequence $(C_m)_{m\in\mathbb{N}}$ in the Hausdorff topology, that is, \[C=\lim_{l\rightarrow\infty}C_{m_l}.\] Property (1) assures that $\diam(C)\geq\gamma$. Since
\[\lim_{m\rightarrow\infty}(n_1(x_m,r_{k_m}(x_m),\eps) - n_1(x_m,r_{k_m}(x_m),\gamma)=\infty\] 
and \[n_1(x_m,r_{k_m}(x_m),\varep)\geq n_1(x_m,r_{k_m}(x_m),\varep)-n_1(x_m,r_{k_m}(x_m),\gamma)\] it follows that $$\lim_{m\rightarrow\infty}n_1(x_m,r_{k_m}(x_m),\varep)=\infty.$$ Lemma \ref{n1} assures that $\lim_{m\rightarrow\infty}r_{k_m}(x_m)=0$, since otherwise $$(n_1(x_m,r_{k_m}(x_m),\varep))_{m\in\N}$$ would have a bounded subsequence. This implies that $$\lim_{m\rightarrow\infty}n_1(x_m,r_{k_m}(x_m),\gamma)=\infty.$$ Thus, Properties (2) e (3) assure that
\[\diam(f^j(C)) = \lim_{l\rightarrow\infty}(f^j(C_{m_l})) \leq \varep\,\,\,\,\,\, \text{for every} \,\,\,\,\,\, j\in\mathbb{Z}\]  
and $C$ is a non-trivial $\eps$-stable and $\eps$-unstable continuum, contradicting cw- expansiveness. This proves property (F2). Now, we prove property (F1).

Suppose that (F1) is not valid, that is, for some constant $\gamma\in (0,\varep]$, there are sequences $(x_m)_{m\in\mathbb{N}}\subset X$ and $(k_m)_{m\in\mathbb{N}}\subset\mathbb{N}$ such that $k_m\to\infty$ when $m\to\infty$ and
\[\lim_{m\rightarrow\infty}|n_1(x_m,r_{k_{m}+1}(x_m),\gamma)-n_1(x_m,r_{k_m}(x_m),\gamma)|=\infty.\] Using property (F2), that was proved for the sequence $(r_k)_{k\in\N}$, there is $m_\gamma\in\mathbb{N}$ such that
\[|n_1(x,r_k(x),\varep)-n_1(x,r_k(x),\gamma)|<m_{\gamma}\] for every $x\in X$ and for every $k\in\N$ such that $r_k(x)<\gamma$. A simple triangle inequality assures that
\[\lim_{m\rightarrow\infty}|n_1(x_m,r_{k_{m}+1}(x_m),\varep)-n_1(x_m,r_{k_m}(x_m),\varep)|=\infty.\] 
Choose a sequence $(\ell_m)_{m\in\N}$ of positive numbers satisfying $\lim_{m\rightarrow\infty}\ell_m=\infty$ and
\[|n_1(x_m,r_{k_{m}+1}(x_m),\varep)-n_1(x_m,r_{k_m}(x_m),\varep)|\geq 2\ell_m\;\;\;\mbox{ for every }\;\;\;m\in\mathbb{N}.\]
Let $\delta\in (0,\varep)$ be given by \cite[Proposition 2.2]{Kato1} such that if $A$ is a subcontinuum of $X$ with $\mbox{diam}(A)\leq\delta$ and \[\mbox{diam}(f^n(A))\geq\varep\;\;\; \mbox{ for some }\;\;\;n\in\mathbb{N},\] then
\[\mbox{diam}(f^j(A))\geq \delta\;\;\;\mbox{ for every }\;\;\;j\geq n.\]
Since for each $m\in\N$ we have
$$n_1(x_m,r_{k_{m}+1}(x_m),\varep)\geq|n_1(x_m,r_{k_{m}+1}(x_m),\varep)-n_1(x_m,r_{k_m}(x_m),\varep)|$$
we obtain 
$$\lim_{m\rightarrow\infty}n_1(x_m,r_{k_{m}+1}(x_m),\varep)=\infty$$
and using Lemma \ref{n1}, as in the proof of (F2), we obtain 
$$\lim_{m\rightarrow\infty}r_{k_m+1}(x_m)=0.$$
Thus, we can assume that 
\[r_{k_m+1}(x_m)<\delta/2 \,\,\,\,\,\, \text{for every} \,\,\,\,\,\, m\in\mathbb{N}.\] 
For each $m\in\N$, let
\[C_m=f^{n_1(x_m,r_{{k_m}}(x_m),\varep)+\ell_m}(\overline{B(x_m, r_{{k_m}+1}(x_m))}).\] 
Recall that the sequence $(r_k)_{k\in\N}$ was chosen so that
\[\diam(f^{n_1(x_m,r_{k_m}(x_m),\varep)}(\overline{B(x_m, r_{{k_m}+1}(x_m))}))=\varep\]
for every $m\in\N$. Since $\overline{B(x_m, r_{{k_m}+1}(x_m))}$ is a subcontinuum of $X$, by property (P1) on the space, and it has diameter smaller than $\delta$, we obtain \[\diam(f^{j}(\overline{B(x_m, r_{{k_m}+1}(x_m))}))\geq\delta \,\,\,\,\,\, \text{for every} \,\,\,\,\,\, j\geq n_1(x_m,r_{k_m}(x_m),\varep).\] In particular, 
\[\diam(C_m)=\diam(f^{n_{1,\varep}(x_m,r_{{k_m}}(x_m))+\ell_m}(\overline{B(x_m, r_{{k_m}+1}(x_m))}))\geq \delta.\]
Thus, the following conditions hold for every $m\in\mathbb{N}$:
\begin{itemize}
	\item[(4)] $\mbox{diam}(C_m)\geq\delta$, 
	\item[(5)] $\mbox{diam}(f^{-j}(C_m))\leq \varep$ for every $j\in\{0,\dots,\ell_m\}$ and
	\item[(6)] $\mbox{diam}(f^{j}(C_m))\leq \varep$ for every $j\in\{0,\dots,\ell_m\}$.
\end{itemize}
Considering an accumulation continuum \[\displaystyle C=\lim_{i\rightarrow\infty}C_{m_i}\] on the Hausdorff topology, we have that $C$ is a continuum, since it is a limit of continua, $\diam (C)\geq\delta$, since 
$$\mbox{diam}(C_m)\geq\delta \,\,\,\,\,\, \text{for every} \,\,\,\,\,\,m\in\N,$$ and 
\[\diam(f^j(C))=\lim_{i\to\infty}\diam(f^j(C_{m_i}))\leq \varep \,\,\,\,\,\, \text{for every} \,\,\,\,\,\, j\in\mathbb{Z}\]
since $\ell_m\to\infty$. Thus, $C$ is a non-trivial $\eps$-stable and $\eps$-unstable continuum contradicting cw-expansiviness. This proves property (F1) and completes the proof.
\end{proof}

Theorem \ref{teoremacontinuosinst} ensures the existence of cw-unstable continua with uniform diameter in every point of the space $x\in X$. Since cw-unstable continua are indeed local unstable, they are contained in the local unstable continua $C^u_{\eps}(x)$. For some time we tried to prove that the local unstable continua are cw-unstable, i.e., belong to $\mathcal{F}^u$. We could just prove it with the following additional hypothesis:

\begin{definition}
Let $f$ be a homeomorphism of a compact metric space $X$ and $0<r<c$. We say that the first increasing time with respect to $c$ of a ball $B(x,r)$ is controlled by a subset $C\subset B(x,r)$ if $$n_1(x,r,c)=n_1(C,c).$$ 
Let $f$ be a cw-expansive homeomorphism with cw-expansivity constant $c=2\eps>0$. We say that the local unstable continua control the increasing time of the balls of the space if the first increasing time of every ball $B(x,r)$ of radius $r<\eps$ is controlled by the connected component of $x$ in $C^u_{\eps}(x)\cap B(x,r)$.
\end{definition}

\begin{proposition}
If $f$ is a cw-expansive homeomorphism with cw-expansivity constant $2\eps>0$ and the local unstable continua control the increasing time of the balls of the space, then $C^u_{\eps}(x)\in\mathcal{F}^u$ for every $x\in X$.
\end{proposition}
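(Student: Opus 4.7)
The plan is to exhibit, for each $x\in X$ and each large $m\in\N$, a radius $r_m > 0$ (with $r_m\to 0$) such that the continuum $C_m := f^m(\overline{B(x_m, r_m)})$, where $x_m := f^{-m}(x)$, Hausdorff-converges along a subsequence to $C^u_\eps(x)$, while $n_1(x_m, r_m, \eps)\in (m, m+m_\eps]$. This directly displays $C^u_\eps(x)$ as an element of $\mathcal{F}^u_\eps\subset\mathcal{F}^u$.

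I would begin with two preliminary observations. Cw-expansivity with constant $2\eps$ implies $\rho_m:=\diam(f^{-m}(C^u_\eps(x)))\to 0$: a Hausdorff accumulation continuum of the sequence $\{f^{-m_j}(C^u_\eps(x))\}_j$ would otherwise be non-trivial with every two-sided iterate of diameter at most $2\eps$ (each such iterate being a Hausdorff limit of preimages of $C^u_\eps(x)$), contradicting cw-expansivity. And by Theorem~\ref{cwft}, $f$ is ft-sensitive, so the defining sequence $(r_k(\cdot))_{k\in\N}$ and constants $m_\gamma$ are at our disposal.

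Proceeding as in the proof of Theorem~\ref{teoremacontinuosinst}, I would select, for each large $m$, an index $k_m$ with $n_1(x_m,r_{k_m}(x_m),\eps)\in(m,m+m_\eps]$ and set $r_m:=r_{k_m}(x_m)$. That proof already shows every Hausdorff accumulation continuum $C$ of the sequence $(C_m)$ lies in $\mathcal{F}^u_\eps$ and satisfies $C\subset C^u_\eps(x)$. The reverse inclusion is where the control hypothesis enters: by that hypothesis, $n_1(x_m,r_m,\eps)=n_1(D_{m,r_m},\eps)$, where $D_{m,r_m}$ is the component of $x_m$ in $C^u_\eps(x_m)\cap B(x_m,r_m)$. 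The time constraint $n_1>m$ yields $\diam(f^j(D_{m,r_m}))\leq\eps$ for $j\in[0,m]$, which combined with $D_{m,r_m}\subset C^u_\eps(x_m)$ implies $f^m(D_{m,r_m})\subset C^u_\eps(x)$. The additional point is that $k_m$ can be chosen with $r_{k_m}(x_m)\geq\rho_m$; then $f^{-m}(C^u_\eps(x))\subset\overline{B(x_m,r_m)}$, and since $f^{-m}(C^u_\eps(x))$ is a connected subset of $C^u_\eps(x_m)\cap B(x_m,r_m)$ containing $x_m$, it is contained in $D_{m,r_m}$. Applying $f^m$ gives $f^m(D_{m,r_m})=C^u_\eps(x)$, whence $C^u_\eps(x)\subset C_m$ for each such $m$. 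Passing to the Hausdorff limit then yields $C^u_\eps(x)\subset C$, and we conclude $C=C^u_\eps(x)$, as desired.

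The main obstacle is securing the joint condition $r_{k_m}(x_m)\geq\rho_m$ together with $n_1(x_m,r_{k_m}(x_m),\eps)\in(m,m+m_\eps]$, and this is where the control hypothesis is indispensable. Via the identity $n_1(x_m,r,\eps)=n_1(D_{m,r},\eps)$ and the monotonicity of $r\mapsto D_{m,r}$, the first-increasing time is a non-increasing function of $r$ whose transition across the value $m$ is governed concretely by how $D_{m,r}$ absorbs $f^{-m}(C^u_\eps(x))$. Property~(F1) bounds $|n_1(x_m,r_{k+1}(x_m),\eps)-n_1(x_m,r_k(x_m),\eps)|$ by $m_\eps$; together with the monotonicity, this should force the existence of an index $k$ with $r_k(x_m)\geq\rho_m$ whose first-increasing time lies in $(m,m+m_\eps]$. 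Carrying out this last step carefully, starting from the control identity and (F1), is the technical heart of the argument.
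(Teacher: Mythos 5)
Your plan matches the paper's in every structural respect: you take $x_m=f^{-m}(x)$, display $C^u_\eps(x)$ as a Hausdorff limit of the continua $C_m=f^m(\overline{B(x_m,r_m)})$, verify the first-increasing-time window, and obtain the two inclusions $C\subset C^u_\eps(x)$ (from Theorem~\ref{teoremacontinuosinst}'s argument) and $C^u_\eps(x)\subset C$ (from the control hypothesis together with $f^{-m}(C^u_\eps(x))\subset\overline{B(x_m,r_m)}$). Your preliminary step showing $\rho_m:=\diam(f^{-m}(C^u_\eps(x)))\to 0$ is also correct (the paper instead cites $C^u_\eps(x)\subset W^u(x)$ from \cite{Kato1}, but the accumulation-continuum argument you give is fine).

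The gap you flag at the end, however, is real, and it is self-inflicted: you constrain the radii to lie in the ft-sensitivity sequence $(r_k(\cdot))_{k\in\N}$ and then try to force the joint condition $r_{k_m}(x_m)\geq\rho_m$ with $n_1(x_m,r_{k_m}(x_m),\cdot)\in(m,m+m_\cdot]$. Nothing in (F1) or the control hypothesis guarantees that these two requirements can be met simultaneously by some $r_k(x_m)$, so this step would not go through as stated. But the constraint is unnecessary: in the definition of $\mathcal{F}^u$ the radii $r_{n_k}$ are a free sequence tending to $0$ satisfying the $n_1$ window, not terms of the sequence of functions from the ft-sensitivity definition. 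The paper therefore simply \emph{defines} $r_m:=\diam(f^{-m}(C^u_\eps(x)))=\rho_m$ (and a fixed $m_{2\eps}$ chosen via Theorem~\ref{cw} so that $\diam(f^k(C^u_\eps(x)))>\eps$ for some $k\leq m_{2\eps}$, rather than trying to recover it from (F1)/(F2)). With this choice the inclusion $f^{-m}(C^u_\eps(x))\subset\overline{B(x_m,r_m)}$ is automatic, the control hypothesis yields $n_1(x_m,r_m,2\eps)=n_1(f^{-m}(C^u_\eps(x)),2\eps)>m$, and $\diam(f^{k}(C^u_\eps(x)))>\eps$ for some $k\leq m_{2\eps}$ gives the upper bound. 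Once you abandon the attempt to locate $r_m$ inside the ft-sensitivity sequence and choose it as $\rho_m$ directly, your argument closes and coincides with the paper's.
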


\begin{proof}
Let $\delta\in(0,\eps)$ be given by Theorem \ref{cw} such that $$\diam(C^u_{\eps}(x))\geq\delta \,\,\,\,\,\, \text{for every} \,\,\,\,\,\, x\in X,$$ and choose $m_{2\eps}\in\N$ such that for each $x\in X$ there exists $k\in\{0,\dots,m_{2\eps}\}$ such that $$\diam(f^k(C^u_{\eps}(x)))>\eps.$$
For each $x\in X$ and $m\in\N$ let $$r_m(x)=\diam(f^{-m}(C^u_{\eps}(x)))$$
and note that $$r_m(x)\leq2\eps \,\,\,\,\,\, \text{for every} \,\,\,\,\,\, m\in\N \,\,\,\,\,\, \text{and}$$
$$r_m(x)\to0 \,\,\,\,\,\, \text{when} \,\,\,\,\,\, m\to\infty,$$ recall from \cite{Kato1} that
$$C^u_{\eps}(x)\subset W^u(x) \,\,\,\,\,\, \text{for every} \,\,\,\,\,\, x\in X.$$
We will prove that
\begin{enumerate}
\item $f^m(B(f^{-m}(x),r_m(x)))\to C^u_{\eps}(x)$ \,\, when \,\, $m\to\infty$, and
\item $n_1(f^{-m}(x),r_m(x),2\eps)\in(m,m+m_{2\eps}]$ \,\, for every \,\, $m\in\N$.
\end{enumerate}
Note that the choice of $r_m(x)$ ensures that 
$$C^u_{\eps}(x)\subset f^m(B(f^{-m}(x),r_m(x))) \,\,\,\,\,\, \text{for every} \,\,\,\,\,\, m\in\N.$$
Thus, if $C$ is an accumulation continuum of the sequence $$(f^m(B(f^{-m}(x),r_m(x))))_{m\in\N},$$ then $C^u_{\eps}(x)\subset C$. 
It follows from the choice of $(r_m(x))_{m\in\N}$ and $m_{2\eps}$ that $$n_1(f^{-m}(x),r_m(x),2\eps)\leq m+m_{2\eps} \,\,\,\,\,\, \text{for every} \,\,\,\,\,\, m\in\N.$$ The hypothesis that the local unstable continua control the increasing time of the balls of the space ensures that
$$n_1(f^{-m}(x),r_m(x),2\eps)>m \,\,\,\,\,\, \text{for every} \,\,\,\,\,\, m\in\N,$$
since in this case we have
$$n_1(f^{-m}(x),r_m(x),2\eps)=n_1(f^{-m}(C^u_{\eps}(x)),2\eps)>m.$$ This proves (2) and also ensures that $C$ is an $\eps$-unstable continuum, since $C$ would be the limit of a sequence $(f^{m_i}(B(f^{-m_i}(x),r_{m_i})))_{i\in\N}$ with $m_i\to\infty$ and $$\diam(f^j(B(f^{-m_i}(x),r_{m_i})))\leq2\eps \,\,\,\,\,\, \text{for every} \,\,\,\,\,\, j\in\{0,\dots,m_i\} \,\,\,\,\,\, \text{and} \,\,\,\,\,\, i\in\N.$$
Then it follows that $C\subset C^u_{\eps}(x)$, since $C^u_{\eps}(x)$ is the connected component of $x$ in $W^u_{\eps}(x)$, and we conclude (1) and the proof.
\end{proof}

\vspace{+0.5cm}

\hspace{-0.4cm}\textbf{Shift on the Hilbert cube $[0,1]^{\Z}$:}

\vspace{+0.5cm}

Let $X=[0,1]^{\mathbb{Z}}$ and consider the following metric on $X$: for each $\underline{x}=(x_i)_{i\in\mathbb{Z}}$ and $\underline{y}=(y_i)_{i\in\mathbb{Z}}$ in $X$, let
	\[d(\underline{x},\underline{y}) = \sup_{i\in\mathbb{Z}}\frac{|x_i-y_i|}{2^{|i|}}.\] Consider the bilateral backward shift
	\[\begin{array}{rcl}
	\sigma : [0,1]^{\mathbb{Z}} &\rightarrow     &  [0,1]^{\mathbb{Z}}\\
	(x_i)_{i\in\mathbb{Z}} & \mapsto & (x_{i+1})_{i\in\mathbb{Z}}\\
	\end{array}.\]
In this section we prove that $\sigma$ is first-time sensitive and characterize their cw-local unstable continua.


\begin{theorem}\label{shiftsens}
The shift map $\sigma\colon [0,1]^{\Z}\to[0,1]^{\Z}$ is first-time sensitive.
\end{theorem}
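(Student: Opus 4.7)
The plan is to take any $\eps\in(0,1/2)$ as a sensitivity constant for $\sigma$---sensitivity follows because if $\underline{y}$ agrees with $\underline{x}$ on all coordinates $i\neq N$ and $y_N\in\{0,1\}$ is chosen with $|y_N-x_N|\geq 1/2>\eps$ (always possible since $\max(x_N,1-x_N)\geq 1/2$), then $d(\underline{x},\underline{y})=|y_N-x_N|/2^N<2^{-N}$ while $d(\sigma^N\underline{x},\sigma^N\underline{y})\geq|y_N-x_N|>\eps$---and to use the constant sequence $r_k(\underline{x})\equiv 2^{-k}$. Both (F1) and (F2) will follow from uniform two-sided estimates on $\diam(\sigma^n(B(\underline{x},2^{-k})))$.

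For the upper bound, note that $\sigma$ is $2$-Lipschitz on $(X,d)$: writing $d(\sigma\underline{y},\sigma\underline{z})=\sup_j|y_j-z_j|/2^{|j-1|}$ and using $|j|\leq|j-1|+1$ gives $d(\sigma\underline{y},\sigma\underline{z})\leq 2\,d(\underline{y},\underline{z})$. Consequently $\sigma^n$ is $2^n$-Lipschitz and
\[\diam\bigl(\sigma^n(B(\underline{x},2^{-k}))\bigr)\leq 2^n\cdot 2\cdot 2^{-k}=2^{n-k+1},\]
which is also trivially bounded by $\diam(X)=1$. For the lower bound I project $\sigma^n(B(\underline{x},2^{-k}))$ onto its $0$-th coordinate, obtaining exactly $(x_n-2^{n-k},x_n+2^{n-k})\cap[0,1]$; a short case analysis according to how this open interval intersects $[0,1]$ shows its length is at least $\min(2^{n-k},1)$ for every $x_n\in[0,1]$, and since $d(\underline{w},\underline{w}')\geq|w_0-w_0'|$ this lower bounds the diameter.

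Combining these, we obtain the uniform estimate
\[\min(2^{n-k},1)\;\leq\;\diam\bigl(\sigma^n(B(\underline{x},2^{-k}))\bigr)\;\leq\;\min(2^{n-k+1},1)\]
valid for every $\underline{x}\in X$, $k\in\N$, and $n\geq 0$. These bounds imply $n_1(\underline{x},2^{-k},\gamma)=k+\log_2\gamma+O(1)$ with an absolute error at most $1$, uniform in $\underline{x}$ and $k$: if $2^{n-k+1}\leq\gamma$ then the upper bound gives $\diam\leq\gamma$, forcing $n_1>n$; and if $2^{n-k}>\gamma$ then the lower bound gives $\diam>\gamma$, forcing $n_1\leq n$. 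Hence (F1) reduces to $|(k+1)+\log_2\gamma-k-\log_2\gamma|+O(1)=1+O(1)$, and (F2) reduces to $|\log_2\gamma-\log_2\eps|+O(1)=\log_2(\eps/\gamma)+O(1)$. Setting $m_\gamma:=\lceil\log_2(\eps/\gamma)\rceil+3$ handles both inequalities simultaneously.

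The principal technical subtlety is the boundary of $[0,1]$: when $x_n$ is close to $0$ or $1$, the projection onto coordinate $n$ is one-sided, of length only $\min(2^{n-k},1)$ rather than the two-sided $\min(2^{n-k+1},1)$ one would expect for an unconstrained ball. This worst case is already built into the lower bound above, so no uniformity in $\underline{x}$ is lost, and the argument goes through without modification.
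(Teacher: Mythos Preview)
Your proof is correct and follows essentially the same strategy as the paper's: both arguments choose a geometric sequence of radii (you use $r_k=2^{-k}$, the paper uses $r_k=\eps/2^k$), establish two-sided bounds of the form $2^{n-k}\lesssim\diam(\sigma^n(B(\underline{x},r_k)))\lesssim 2^{n-k+1}$, and read off that $n_1(\underline{x},r_k,\gamma)$ lies in a window of bounded width around $k+\log_2\gamma$, from which (F1) and (F2) follow with $m_\gamma\approx\log_2(\eps/\gamma)$. Your upper bound via the $2$-Lipschitz property of $\sigma$ is a slight streamlining of the paper's explicit coordinate-by-coordinate computation, and your explicit handling of the boundary case $x_n\in\{0,1\}$ in the lower bound is cleaner than the paper's treatment, but these are cosmetic differences rather than a different route.
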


\begin{proof}
We first prove that $\sigma$ is sensitive (this can be found in \cite{AIL}). We prove that any $\eps<c=\frac{1}{4}$ is a sensitivity constant of $\sigma$. Given $\delta>0$ and $\underline{x}=(x_i)_{i\in\mathbb{Z}}\in X$, choose $i_0\in\mathbb{Z}$ such that ${c}/{2^{i_0}}<\delta$. Let
$$y_{i_0} = \left\{\begin{array}{rr}
x_{i_0}+c,& \mbox{if } x_{i_0}\in [0,1/2]\\
x_{i_0}-c,& \mbox{if } x_{i_0}\in (1/2,1].\\
\end{array}\right.$$ Then, the sequence $\underline{y}=(\ldots,x_{-1},x_0, x_1,\ldots, x_{i_0-1},y_{i_0}, x_{i_0+1}\ldots)$, that is $\underline{x}$ changing only the $i_0$-th coordinate $x_{i_0}$ with $y_{i_0}$, belongs to $X$ and is contained in the ball centered at $\underline{x}$ and radius $\delta$, since $$d(\underline{y},\underline{x})=\sup_{i\in\mathbb{Z}}\left(\frac{|y_i-x_i|}{2^{|i|}}\right) = \frac{|x_{i_0} \pm c - x_{i_0}|}{2^{i_0}} =  \frac{c}{2^{i_0}}<\delta.$$ 
Also, note that
$$d(\sigma^{i_0}(\underline{x}),\sigma^{i_0}(\underline{y}))= \sup_{i\in\mathbb{Z}}\left(\frac{|x_{i+i_0} - y_{i+i_0}|}{2^{|i|}}\right) = |x_{i_0}- x_{i_0} \pm c|=c>\eps$$ and that this is enough to prove that $\sigma$ is sensitive.
Now we prove that $\sigma$ is first-time sensitive. 
For each $\underline{x}=(x_i)_{i\in\mathbb{Z}}\in X$ and $\underline{y}=(y_i)_{i\in\mathbb{Z}}\in X$ we have \[\begin{array}{rcl}\displaystyle\underline{y}=(y_i)_{i\in\mathbb{Z}}\in B(\underline{x},\eps)
&\Leftrightarrow&\displaystyle\sup_{i\in\mathbb{Z}}\left(\frac{|x_i-y_i|}{2^{|i|}}\right)<\eps\vspace{+0.3cm}\\
    &\Leftrightarrow&\displaystyle |x_i-y_i|< 2^{|i|}\eps \,\,\,\,\,\, \text{for every} \,\,\,\,\,\, i\in\mathbb{Z}\vspace{+0.3cm}\\
   &\Leftrightarrow&\displaystyle y_i\in (x_i-2^{|i|}\eps, x_i+2^{|i|}\eps) \,\,\,\,\,\, \text{for every} \,\,\,\,\,\, i\in\mathbb{Z}.
\end{array}\]
A similar argument proves that 
\[\underline{y}=(y_i)_{i\in\mathbb{Z}}\in \sigma^j\left(B\left(\underline{x},\frac{\eps}{2^{n}}\right)\right)\]
if, and only if,
\[y_i\in \left(x_{i+j}-2^{|i+j|}\frac{\eps}{2^n}, x_{i+j}+2^{|i+j|}\frac{\eps}{2^n}\right)\cap[0,1]   \,\,\,\,\,\, \text{for every} \,\,\,\,\,\, i\in\mathbb{Z}.\]
For each $\underline{x}\in X$, $n\in\N$ and $j\in\N$ we have
\begin{equation}\label{desigualdeftcshift}
2^{j-n}\eps\leq\diam\;\left(\sigma^j\left(B\left(\underline{x},\frac{\eps}{2^{n}}\right)\right)\right)\leq 2^{j-n+1}\eps.\end{equation}
Indeed, letting for each $i\in\Z$ and $j\in\N$ 
$$I_{i,j} = \left(x_{i+j}-2^{|i+j|}\dfrac{\eps}{2^n}, x_{i+j}+2^{|i+j|}\dfrac{\eps}{2^n}\right)\cap[0,1],$$ we have
$$\diam\left(\sigma^j\left(B\left(\underline{x},\frac{\eps}{2^{n}}\right)\right)\right)=\sup_{i\in\mathbb{Z}}\;\frac{\diam(I_{i,j})}{2^{|i|}},$$
and since
$$2^{j-n}\eps\leq\frac{\diam(I_{i,j})}{2^{|i|}}\leq2^{j-n+1}\eps$$
for every $i\in\Z$, we obtain the desired inequalities.
For each $\gamma\in(0,\eps]$, choose $k_\gamma\in\mathbb{N}$ such that 
\begin{equation}\label{m_gammashift}
    \displaystyle \dfrac{\eps}{2^{k_\gamma+1}}\leq\gamma <\dfrac{\eps}{2^{k_\gamma}}.
\end{equation}
From inequality (\ref{desigualdeftcshift}) we obtain
\[\diam\;\left(\sigma^j\left(B\left(\underline{x},\frac{\eps}{2^{n}}\right)\right)\right)\leq\dfrac{\eps}{2^{k_\gamma+1}}\leq\gamma \,\,\,\,\,\, \text{when} \,\,\,\,\,\, 0\leq j\leq n-k_\gamma-2,\]
and \[  \diam\;\left(\sigma^j\left(B\left(\underline{x},\frac{\eps}{2^{n}}\right)\right)\right)\geq\frac{\eps}{2^{k_\gamma}}>\gamma \,\,\,\,\,\, \text{if} \,\,\,\,\,\, j\geq n-k_\gamma.\] 
This implies that $n_1\left(\underline{x},\frac{\eps}{2^n},\gamma\right)$ is either $n-k_\gamma-1$ or $n-k_\gamma$ for every $n\in\N$. 
Thus,
\[n_1\left(\underline{x},\frac{\eps}{2^{n+1}},\gamma\right)-n_1\left(\underline{x},\frac{\eps}{2^n},\gamma\right)\leq 2\] and \[n_1\left(\underline{x},\frac{\eps}{2^n},\eps\right) - n_1\left(\underline{x},\frac{\eps}{2^n},\gamma\right) \leq k_\gamma+2\]
(in the last inequality we used that $n_1\left(\underline{x},\frac{\eps}{2^n},\eps\right)$ is either $n$ or $n+1$).
Since this holds for every $n\in\N$, considering $m_\gamma=k_\gamma+2$, we have that $(m_\gamma)_{\gamma\in(0,\eps]}$ satisfies the Properties (F1) and (F2) for sequence of the radius $\left(\frac{\eps}{2^n}\right)_{n\in\N}$. Given that  $\left(\frac{\eps}{2^n}\right)_{n\in\N}$ decrease monotonically to 0, we conclude that $\sigma$ is first-time sensitive. Notice that $\gamma\mapsto m_\gamma$ is non-increasing, since $m_\gamma=k_\gamma+2$ where $k_\gamma$ satisfies (\ref{m_gammashift}).
\end{proof}

\begin{remark}
We remark that there are no cw-expansive homeomorphisms on infinite dimensional compact metric spaces. This was proved by Kato in \cite{Kato1} generalizing a result of Mañé in the case of expansive homeomorphisms \cite{Ma}. Even though first-time sensitive homeomorphisms share important properties with cw-expansive homeomorphisms, as proved in Section 2, it is not possible to adapt the proof of Kato/Mañé to the case of first-time sensitive homeomorphisms.
\end{remark}

A direct consequence of Proposition \ref{shiftsens} is the following:

\begin{corollary}
There exist first-time sensitive homeomorphisms on infinite dimensional compact metric spaces.
\end{corollary}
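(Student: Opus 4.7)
The plan is essentially immediate: apply Theorem~\ref{shiftsens}. I would exhibit the Hilbert cube $X = [0,1]^{\Z}$, equipped with the metric $d(\underline{x},\underline{y}) = \sup_{i\in\Z}|x_i-y_i|/2^{|i|}$ already used in the previous subsection, together with the backward shift $\sigma$ as the required example.

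Next I would verify the two properties of the underlying space. Compactness follows from Tychonoff's theorem, since $[0,1]^{\Z}$ is a countable product of the compact interval $[0,1]$, and the metric $d$ induces the product topology (a standard check: a sequence $(\underline{x}^{(n)})$ converges in $d$ iff it converges coordinatewise). For infinite-dimensionality I would observe that $X$ contains homeomorphic copies of the finite cube $[0,1]^n$ for every $n\in\N$ (take the closed subspace of sequences with all but $n$ fixed coordinates); since covering dimension is monotone on closed subspaces of compact metric spaces, this forces $\dim X = \infty$.

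Finally, the conclusion is obtained by quoting Theorem~\ref{shiftsens}, which asserts that $\sigma$ is first-time sensitive. The only conceivable obstacle is a convention about what "infinite-dimensional" means, but any of the standard notions (covering, small or large inductive dimension) yield the same answer on the Hilbert cube, so no genuine difficulty arises. The whole point of stating the corollary is to emphasize the contrast with the Kato/Ma\~n\'e obstruction to cw-expansiveness recalled in the preceding remark: first-time sensitivity, unlike cw-expansiveness, is realizable on infinite-dimensional compacta, and the shift on $[0,1]^{\Z}$ is the natural witness.
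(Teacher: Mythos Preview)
Your proposal is correct and matches the paper's approach exactly: the paper states the corollary as ``a direct consequence'' of Theorem~\ref{shiftsens}, and your argument simply supplies the routine verifications (compactness and infinite-dimensionality of $[0,1]^{\Z}$) that the paper leaves implicit.
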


\begin{remark}\label{rmkshift1}
We remark that the theorem of Kato assures that $\sigma\colon [0,1]^{\Z}\to[0,1]^{\Z}$ is not cw-expansive, but it is easy to choose non-trivial continua in arbitrarily small dynamical balls. For each $r>0$, the continuum 
$$C_r= \prod_{i<0}\{0\}\times [0,r]\times \prod_{i>0}\{0\}$$ is non-degenerate and 
$$\diam(\sigma^n(C_r))\leq\diam(C_r)=r \,\,\,\,\,\, \text{for every} \,\,\,\,\,\, n\in\mathbb{Z}.$$ We note that $C_r$ is both an $r$-stable and $r$-unstable continuum that is not cw-stable nor cw-unstable since its diameter does not increase in the future or in the past. We also note that $C_r$ is both stable and unstable, since
$$\diam(\sigma^n(C_r))\leq\frac{r}{2^{|n|}} \,\,\,\,\,\, \text{for every} \,\,\,\,\,\, n\in\mathbb{Z}.$$
\end{remark}

\begin{remark}\label{rmkshift2}
We remark that $\sigma$ contains local stable continua that are not stable, and local unstable continua that are not unstable, on every point of the space. For each $\varepsilon\in(0,c)$ and $\underline{x}=(x_i)_{i\in\mathbb{Z}}\in X$, the non-trivial continuum \[C_{\underline{x}}=\prod_{i\in\mathbb{Z}}\{[x_i-\varepsilon,x_i+\varepsilon]\cap [0,1]\}\] is contained in $W^s_\varepsilon(\underline{x})\cap W^u_\varepsilon(\underline{x})$. Indeed, if $\underline{y}=(y_i)_{i\in\mathbb{Z}}\in C_{\underline{x}}$, then 
\[y_i\in [x_i-\varepsilon,x_i+\varepsilon] \,\,\,\,\,\, \text{for every} \,\,\,\,\,\, i\in\mathbb{Z}\] and this implies that
\begin{eqnarray*}
d(\sigma^n(\underline{x}), \sigma^n(\underline{y})) &=& \sup_{i\in\mathbb{Z}}\frac{|x_{i+n}-y_{i+n}|}{2^{|i|}}\\
&\leq&\sup_{i\in\mathbb{Z}}\frac{\varepsilon}{2^{|i|}}\\
&\leq& \varepsilon
\end{eqnarray*}
for every $n\in\mathbb{Z}$. Moreover, $C_{\underline{x}}$ is not stable. Indeed, for each $\alpha\in(0,\eps]$, the sequence $\underline{y}=(y_i)_{i\in\mathbb{Z}}$ defined as follows \[y_i=\left\{\begin{array}{ll}
	x_i,&i<0\\
	x_i+\alpha,&i\geq 0 \mbox{ and } x_i\in [0,1/2]\\
	x_i-\alpha,&i\geq 0 \mbox{ and } x_i\in (1/2,1]\\
	\end{array}\right.\] belongs to $C_{\underline{x}}$, but 
\begin{eqnarray*}
d(\sigma^n(\underline{y}), \sigma^n(\underline{x}))&=&\sup_{i\in\mathbb{Z}}\frac{|y_{i+n}-x_{i+n}|}{2^{|i|}}\\
&=& \sup_{i\geq -n}\frac{|x_{i+n}\pm\alpha-x_{i+n}|}{2^{|i|}}\\
&=&\alpha
\end{eqnarray*}
for every $n\in\mathbb{N}$, that is, $\underline{y}\notin W^s(\underline{x})$. This assures that 
\[\diam(\sigma^n(C_{\underline{x}}))\geq\eps \,\,\,\,\,\, \text{for every} \,\,\,\,\,\, n\geq 0\] 
and that $C_{\underline{x}}$ is not stable. A similar argument proves that $C_{\underline{x}}$ is a local unstable continuum that is not unstable.
\end{remark}

The next proposition characterizes the local cw-unstable continua of the shift map.

\begin{proposition}\label{caracterizationfushift}
A continuum $C$ belongs to $\mathcal{F}^u$ if, and only if, there are $\underline{x}=(x_i)_{i\in\mathbb{Z}}\in[0,1]^{\mathbb{Z}}$ and $k\in\mathbb{N}\cup\{0\}$ such that
\[C=\prod_{i\in\mathbb{Z}}\{[x_i-2^{i-k}\eps,x_i+2^{i-k}\eps]\cap[0,1]\}.\]
\end{proposition}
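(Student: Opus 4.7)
The plan is to exploit the product structure of balls in the Hilbert cube. As observed in the proof of Theorem~\ref{shiftsens}, a ball factors as
$$B(\underline{y}, r) = \prod_{i\in\Z} \bigl((y_i - 2^{|i|}r,\, y_i + 2^{|i|}r) \cap [0,1]\bigr),$$
and consequently
$$\sigma^{n}\bigl(B(\sigma^{-n}(\underline{x}), r)\bigr) = \prod_{i\in\Z} \bigl((x_i - 2^{|i+n|}r,\, x_i + 2^{|i+n|}r) \cap [0,1]\bigr).$$
Moreover, on $[0,1]^{\Z}$ with its weighted sup-metric, one checks that $d_H(A, B) = \sup_{i\in\Z} 2^{-|i|}\, d_{H,i}(A_i, B_i)$ for any two product compact sets $A = \prod_i A_i$ and $B = \prod_i B_i$, so Hausdorff limits of such products may be computed coordinate by coordinate.

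For the sufficiency direction, given $\underline{x}$ and $k \in \N \cup \{0\}$, I set $n_j = j$ and $r_{n_j} = \eps/2^{n_j+k}$. For each fixed $i \in \Z$, once $j \geq -i$ the exponent $|i+n_j|$ equals $i+n_j$, so the $i$-th factor becomes exactly $(x_i - 2^{i-k}\eps, x_i + 2^{i-k}\eps) \cap [0,1]$; for $i+n_j < 0$ the factor has width $2^{-i-2n_j-k+1}\eps \to 0$ and collapses to $\{x_i\}$, which already lies in the target interval. Taking coordinatewise Hausdorff limits of the closures yields the continuum $\prod_{i\in\Z}\bigl([x_i - 2^{i-k}\eps, x_i + 2^{i-k}\eps] \cap [0,1]\bigr)$. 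To verify the first-increasing time constraint, I invoke the computation in the proof of Theorem~\ref{shiftsens} showing $n_1(\underline{z}, \eps/2^{m}, \gamma) \in \{m - k_\gamma - 1,\, m - k_\gamma\}$, where $k_\gamma$ is determined by $\eps/2^{k_\gamma+1} \leq \gamma < \eps/2^{k_\gamma}$; substituting $m = n_j+k$ and choosing dyadic $\gamma = \eps/2^{k-1}$ (so $k_\gamma = k-1$, $m_\gamma = k+1$) gives $n_1 = n_j + 1 \in (n_j, n_j + m_\gamma]$ for $k \geq 1$, while the corner case $k = 0$ requires a minor adjustment (e.g., using a slightly sub-dyadic $\gamma$ together with Proposition~\ref{decreasing}).

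For the necessity direction, suppose $C = \lim_j \sigma^{n_j}(\overline{B(\sigma^{-n_j}(\underline{x}), r_{n_j})}) \in \mathcal{F}^u_\gamma$. The diameter inequality~(\ref{desigualdeftcshift}) together with the constraint $n_1(\sigma^{-n_j}(\underline{x}), r_{n_j}, \gamma) \in (n_j, n_j + m_\gamma]$ confines $s_j := r_{n_j} \cdot 2^{n_j}$ to a bounded interval in $(0, \eps]$. Passing to a subsequence I may assume $s_j \to s$ and that $n_1 - n_j$ stabilizes at some integer $\ell \in \{1, \dots, m_\gamma\}$. The coordinatewise computation from the sufficiency direction then identifies the Hausdorff limit with $\prod_{i\in\Z} \bigl([x_i - 2^{i}s,\, x_i + 2^{i}s] \cap [0,1]\bigr)$, where $\underline{x}$ arises from the limit of the centers. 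The main obstacle, which I expect to be the technical heart of the argument, is showing that $s = 2^{-k}\eps$ for some integer $k \geq 0$: this uses the integer-valued nature of $n_1$ combined with the explicit formula above, which at dyadic $\gamma$ pins $s/\eps$ to a dyadic power along the subsequence. Allowing $\gamma$ to range over the dyadic values of $(0, \eps]$ then recovers every $k \in \N \cup \{0\}$.
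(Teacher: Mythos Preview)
Your sufficiency direction is essentially the paper's argument: take $n_j=j$, $r_{n_j}=\eps/2^{j+k}$, and check the coordinatewise limit. (One small slip: with $\gamma=\eps/2^{k-1}$ the definition $\eps/2^{k_\gamma+1}\le\gamma<\eps/2^{k_\gamma}$ forces $k_\gamma=k-2$, not $k-1$; the bookkeeping still goes through, but you should fix the arithmetic.)

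The necessity direction, however, has a genuine gap. You treat the radii $r_{n_j}$ as arbitrary positive reals, set $s_j=r_{n_j}\,2^{n_j}$, pass to a limit $s$, and then announce that ``the main obstacle'' is to show $s=2^{-k}\eps$ for some integer $k$ --- but you do not actually prove this, and your sketch (``the integer-valued nature of $n_1$ \dots pins $s/\eps$ to a dyadic power'') is not a proof. In fact, with arbitrary radii the obstacle is insurmountable: the first-increasing-time condition only bounds $s_j$ between two fixed dyadic powers of $\eps$, so any $s$ in that interval can arise as a limit, and the resulting continuum $\prod_i([x_i-2^is,\,x_i+2^is]\cap[0,1])$ need not be of the stated form.

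What you are missing is that the radii appearing in $\mathcal{F}^u$ are not arbitrary: they are drawn from the sequence $(r_k)_{k\in\N}$ supplied by the first-time sensitivity of $\sigma$, which by Theorem~\ref{shiftsens} is $r_k=\eps/2^k$. Thus the paper writes the approximants as $\sigma^{n_j}\bigl(\overline{B(\sigma^{-n_j}(\underline x),\,\eps/2^{l_j})}\bigr)$ with integer $l_j$, and then the $i$-th factor has half-width $2^{|n_j+i|-l_j}\eps$. The constraint $n_1\in(n_j,n_j+m_\gamma]$ combined with \eqref{eq11} forces $n_j-l_j$ to lie in a \emph{finite} set of integers independent of $j$; since the Hausdorff limit exists, the integer exponent $|n_j+i|-l_j$ must eventually be constant, equal to $i-k$ for some fixed $k$. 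That is the whole argument --- the dyadic conclusion is not an obstacle to be overcome but an immediate consequence of the dyadic radii. (Also, $\underline{x}$ is fixed from the outset in the definition of $\mathcal{F}^u$; there is no ``limit of centers'' to take.)
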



\begin{proof}
According to the proof of Theorem \ref{shiftsens}, any $\eps<c=\frac{1}{4}$ is a sensitivity constant of $\sigma$, the sequence $(r_n)_{n\in\N}$ in the definition of first-time sensitivity is $\left(\dfrac{\eps}{2^n}\right)_{n\in\N}$, and for each $\gamma\in(0,\eps]$ there exists $k_\gamma\in\mathbb{N}$ such that $$\displaystyle \dfrac{\eps}{2^{k_\gamma+1}}\leq\gamma <\dfrac{\eps}{2^{k_\gamma}}, \,\,\,\,\,\, m_{\gamma}=k_{\gamma}+2, \,\,\,\,\,\, \text{and}$$ 
\begin{equation}\label{eq11}n_1\left(\underline{y}, \dfrac{\eps}{2^n},\gamma\right)\in \{n-k_\gamma-1,n-k_\gamma\} \mbox{ for every }\underline{y}\in[0,1]^{\mathbb{Z}} \mbox{ and } n\in\N.\end{equation}
Thus, if $C\in\mathcal{F}^u$, then $C\in\mathcal{F}^u_{\gamma}$ for some $\gamma\in(0,\eps)$, and, hence, there exist $\underline{x}\in[0,1]^{\mathbb{Z}}$, and increasing sequences $(l_j)_{j\in\mathbb{N}}$ and $(n_j)_{j\in\mathbb{N}}\subset \mathbb{N}$ such that
\[C=\lim_{j\rightarrow\infty}\sigma^{n_j}\left(\overline{B\left(\sigma^{-n_j}(\underline{x}),\dfrac{\eps}{2^{l_j}}\right)}\right)\;\;\;\mbox{ and}\]
\begin{equation}\label{eq12} n_1\left(\sigma^{-n_j}(\underline{x}),\dfrac{\eps}{2^{l_j}},\gamma\right)\in (n_j, n_j+m_\gamma]\;\;\;\mbox{for every}\;\;\;j\in\mathbb{N}.\end{equation} 
As in the proof of Theorem \ref{shiftsens}, we have
\[\overline{B\left(\sigma^{-n_j}(\underline{x}),\dfrac{\eps}{2^{l_j}}\right)} = \prod_{i\in\mathbb{Z}}\left\{\left[x_{i-n_j}-2^{|i|}\dfrac{\eps}{2^{l_j}},x_{i-n_j}+2^{|i|}\dfrac{\eps}{2^{l_j}}\right]\cap[0,1]\right\}\] and, consequently,   \[\sigma^{n_j}\left(\overline{B\left(\sigma^{-n_j}(\underline{x}),\dfrac{\eps}{2^{l_j}}\right)}\right) = \prod_{i\in\mathbb{Z}}\left\{\left[x_i-2^{|n_j+i|}\dfrac{\eps}{2^{l_j}},x_i+2^{|n_j+i|}\dfrac{\eps}{2^{l_j}}\right]\cap[0,1]\right\}\] for every $j\in\mathbb{N}$. 
Thus,
\[\begin{array}{rcl}
C&=&\displaystyle \lim_{j\rightarrow\infty}\prod_{i\in\mathbb{Z}}\left\{\left[x_i-2^{|n_j+i|}\dfrac{\eps}{2^{l_j}},x_i+2^{|n_j+i|}\dfrac{\eps}{2^{l_j}}\right]\cap[0,1]\right\}\\
\\
&=&\displaystyle \prod_{i\in\mathbb{Z}}\left\{\left[x_i-\lim_{j\rightarrow\infty}2^{|n_j+i|-l_j}{\eps},x_i+\lim_{j\rightarrow\infty}2^{|n_j+i|-l_j}{\eps}\right]\cap[0,1]\right\}.
\end{array}\]
Note that the limit $\lim_{j\rightarrow\infty}2^{|n_j+i|-l_j}$ exists since the iterations of the previous closed balls converge to $C$ (by hypothesis) so each of their coordinates, and hence the radius of each interval, also converge. 
Moreover, (\ref{eq11}) ensures that
$$n_1\left(\sigma^{-n_j}(\underline{x}),\dfrac{\eps}{2^{l_j}},\gamma\right)\in \{l_j-k_\gamma-1,l_j-k_\gamma\} \,\,\,\,\,\, \text{for every} \,\,\,\,\,\, j\in\mathbb{N},$$
and this with (\ref{eq12}) ensure that $$n_j\in\{l_j-k_\gamma-m_\gamma-1,\dots,l_j-k_\gamma-1\} \,\,\,\,\,\, \text{for every} \,\,\,\,\,\, j\in\mathbb{N},$$
that is,
$$n_j-l_j\in\{-k_\gamma-m_\gamma-1,\dots,-k_\gamma-1\} \,\,\,\,\,\, \text{for every} \,\,\,\,\,\, j\in\mathbb{N}.$$
Thus, for each $i\in\mathbb{Z}$ there exists $j_0\in\N$ such that 
$$|n_j+i|=n_j+i \,\,\,\,\,\, \text{for every} \,\,\,\,\,\, j\geq j_0,$$ and, hence,  $$|n_j+i|-l_j\in\{-k_\gamma-m_{\gamma}-1+i,\ldots, -k_\gamma-1+i\} \,\,\,\,\,\, \text{for every} \,\,\,\,\,\, j\geq j_0.$$ Since the limit $\lim_{j\rightarrow\infty}2^{|n_j+i|-l_j}$ exists, there exists $$-k\in\{-m_\gamma-k_\gamma,\ldots,-k_\gamma-1\}$$ and $j_1\geq j_0$ such that
$$2^{|n_j+i|-l_j}=2^{i-k} \,\,\,\,\,\, \text{for every} \,\,\,\,\,\, j\geq j_1.$$
So, $\displaystyle\lim_{j\rightarrow\infty}2^{|n_j+i|-l_j}=2^{i-k}$ and, hence,
\[C=\prod_{i\in\mathbb{Z}}\{[x_i-2^{i-k}\eps,x_i+2^{i-k}\eps]\cap[0,1].\]
Now, suppose that there exists $k\in\N$ such that $$\displaystyle C=\prod_{i\in\mathbb{Z}}\{[x_i-2^{i-k}\eps,x_i+2^{i-k}\eps]\cap[0,1]\}.$$ We will prove that
\[C=\lim_{j\rightarrow \infty}\sigma^j\left(\overline{B\left(\sigma^{-j}(\underline{x}),\frac{\eps}{2^{j+k}}\right)}\right).\]
As above,
\[\sigma^j\left(\overline{B\left(\sigma^{-j}(\underline{x}),\frac{\eps}{2^{j+k}}\right)}\right) = \prod_{i\in\mathbb{Z}}\{[x_i-2^{|i+j|-j-k}\eps,x_i+2^{|i+j|-j-k}\eps]\cap[0,1]\}\]
for every $j\in\mathbb{N}$. 
Thus, if $i\in\N$ and $j\geq|i|$, then 
$$|i+j|=i+j, \,\,\,\,\,\, 2^{|i+j|-j-k}=2^{i-k},$$
$$[x_i-2^{|i+j|-j-k}\eps,x_i+2^{|i+j|-j-k}\eps]=[x_i-2^{i-k}\eps,x_i+2^{i-k}\eps],$$ and, hence,
$$C \,\,\,\,\,\, \text{and} \,\,\,\,\,\, \sigma^j\left(\overline{B\left(\sigma^{-j}\left(\underline{x},\frac{\eps}{2^{j+k}}\right)\right)}\right)$$
have the same coordinates between $-i$ and $i$. Since this holds for every $i\in\N$ and $j>|i|$, it is enough to conclude the desired limit and the proof.
\end{proof}

\begin{remark}
We note that all objects in the above proofs depend on the metric you choose for the space, from the sequence of radius $(\frac{\eps}{2^n})_{n\in\N}$, to the numbers $m_{\gamma}$ and also the local cw-unstable continua in $\mathcal{F}^u$. We invite the reader to prove similar results with a different metric for the Hilbert cube and see how these objects change.
\end{remark}


\vspace{+0.5cm}

\hspace{-0.4cm}\textbf{Partially Hyperbolic Diffeomorphisms:}

\vspace{+0.5cm}

In this subsection we discuss first-time sensitivity in the context of partially hyperbolic diffeomorphisms. The ideas and techniques of this paper are from topological dynamics and we will try to stay in the world of topological dynamics even though we need to talk about differentiability to define partial hyperbolicity. 

\begin{definition}
A diffeomorphism $f\colon M\to M$ of a closed smooth manifold is called partially hyperbolic if the tangent bundle splits into three $Df$-invariant sub-bundles $TM=E^s\oplus E^c\oplus E^u$ where $E^s$ is uniformly contracted, $E^u$ is uniformly expanded, one of them is non-trivial, and the splitting is dominated (see \cite{CP} for more details on this definition).
\end{definition}

Classical and important examples of partially hyperbolic diffeomorphisms are obtained from direct products of an Anosov diffeomorphism $f\colon M\to M$ of a closed smooth manifold $M$ and the identity map or with a rotation of the unit circle $\mathbb{S}^1$. These examples are first-time sensitive and this is a consequence of the following more general proposition. Recall that an equicontinuous homeomorphism $g$ is defined as the family of iterates $(g^n)_{n\in\N}$ being equicontinuous.

\begin{prop}\label{product}
If $f$ is a first-time sensitive homeomorphism and $g$ is an equicontinuous homeomorphism, then $f\times g$ is first-time sensitive.
\end{prop}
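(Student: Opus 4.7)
The plan is to endow $X \times Y$ with the max metric $d((x,y),(x',y')) = \max\{d_X(x,x'),\, d_Y(y,y')\}$, so that balls factor as $B((x,y), r) = B_X(x,r) \times B_Y(y,r)$ and diameters of products decompose as maxima, giving the key identity
\[
n_1^{f\times g}((x,y), r, \gamma) \;=\; \min\{n_1^f(x, r, \gamma),\; n_1^g(y, r, \gamma)\}.
\]
First, I would check that the sensitivity constant $\eps$ of $f$ is also a sensitivity constant of $f \times g$: the sensitivity witness $x' \in B_X(x,\delta)$ for $f$ yields $(x',y) \in B((x,y),\delta)$ whose forward iterate is $\eps$-separated from that of $(x,y)$, with the separation coming entirely from the $X$-coordinate.

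Next, I would use equicontinuity to suppress the $Y$-factor at small scale. For each $\gamma \in (0,\eps]$ choose a modulus $\rho(\gamma) > 0$, non-decreasing in $\gamma$, with $d_Y(y,y') < \rho(\gamma) \Rightarrow d_Y(g^n(y), g^n(y')) < \gamma$ for every $n \in \N$; then $\diam(g^n(B_Y(y,r))) \leq \gamma$ for every $n \in \N$ whenever $r \leq \rho(\gamma)/2$, i.e.\ $n_1^g(y,r,\gamma) = \infty$ in that regime. For the ft-sensitivity sequence of $f \times g$ I would simply set $\tilde r_k(x,y) := r_k(x)$, where $(r_k)$ is the ft-sensitivity sequence of $f$; this is a sequence of positive functions on $X \times Y$ starting at the constant $r_1$ and decreasing pointwise to $0$.

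Fix $\gamma \in (0,\eps]$ and $(x,y)$, $k$ with $r_k(x) \leq \gamma$. I would split into two regimes. In the small-radius regime $r_k(x) \leq \rho(\gamma)/2$, monotonicity of $(r_k)$ together with $\rho(\gamma) \leq \rho(\eps)$ give $n_1^g(y,\cdot,\gamma') = \infty$ for $\gamma' \in \{\gamma, \eps\}$ at both $r_k(x)$ and $r_{k+1}(x)$; so $n_1^{f \times g}$ coincides with $n_1^f$ in every expression appearing in (F1) and (F2), and they reduce to the $f$-versions with bound $m_\gamma$.

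The large-radius regime $\rho(\gamma)/2 < r_k(x) \leq \gamma$ is the main obstacle: here the $Y$-factor might in principle lower $n_1^{f\times g}$ below $n_1^f$. The remedy is Lemma \ref{n1} applied to $f$ at the fixed positive radius $\rho(\gamma)/2$, with sensitivity constants $\gamma$ and $\eps$: it yields $N(\gamma) \in \N$ such that $n_1^f(x, \rho(\gamma)/2, \gamma') \leq N(\gamma)$ for every $x \in X$ and $\gamma' \in \{\gamma,\eps\}$. Since $n_1^f$ is non-increasing in the radius, $r_k(x) \geq \rho(\gamma)/2$, and $n_1^{f \times g} \leq n_1^f$, we get $n_1^{f\times g}((x,y), r_k(x), \gamma') \leq N(\gamma)$, which directly bounds (F2). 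Combining with (F1) for $f$ gives $n_1^f(x, r_{k+1}(x), \gamma) \leq n_1^f(x, r_k(x), \gamma) + m_\gamma \leq N(\gamma) + m_\gamma$, hence $n_1^{f\times g}((x,y), r_{k+1}(x), \gamma) \leq N(\gamma) + m_\gamma$ and (F1) follows. Setting $\tilde m_\gamma := m_\gamma + N(\gamma)$ handles both regimes and both properties.
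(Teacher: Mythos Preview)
Your proposal is correct and follows the same strategy as the paper: endow $X\times Y$ with the max (product) metric, transport the sequence $(r_k)$ from $f$ to $f\times g$ via $\tilde r_k(x,y)=r_k(x)$, and use equicontinuity of $g$ to force $n_1^{f\times g}=n_1^f$ once the radius is below the equicontinuity modulus. The paper's argument stops there, asserting (F1) and (F2) for $f\times g$ ``with the same $m_\gamma$'' directly from those of $f$; strictly speaking this only covers the indices $k$ with $r_k(x)\leq\delta_\gamma$, whereas the definition requires them for all $k$ with $r_k(x)\leq\gamma$. Your explicit treatment of the large-radius regime $\rho(\gamma)/2<r_k(x)\leq\gamma$ via Lemma~\ref{n1}, producing the adjusted bound $\tilde m_\gamma=m_\gamma+N(\gamma)$, fills exactly this gap and makes the argument complete.
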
 
\begin{proof}
Let $f\colon X\to X$ be a first-time sensitive homeomorphism and $g\colon Y\to Y$ be an equicontinuous homeomorphism of compact metric spaces $X$ and $Y$. We consider the product metric on the space $X\times Y$. Let $\eps>0$ be a sensitivity constant of $f$ and $(r_k)_{k\in\mathbb{N}}$ be the sequence of functions, given by first-time sensitivity, such that for each $\gamma\in(0,\eps]$ there is $m_\gamma>0$ satisfying properties (F1) and (F2).
Since $g$ is equicontinuous and $Y$ is compact, there exists $\delta_\gamma>0$ such that
\[B_Y(y,\delta_\gamma)\subset W^s_{\gamma,g}(y)\cap W^u_{\gamma,g}(y) \;\;\;\mbox{ for every }y\in Y.\] Defining $s_k\colon X\times Y\to\mathbb{R}^*_+$ by $s_k(x,y)=r_k(x)$, this implies that \[n_{1,f\times g}((x,y),s_k(x,y),\gamma) = n_{1,f}(x,r_k(x),\gamma)\] for every $(x,y)\in X\times Y$ and $k\in\mathbb{N}$ such that $r_k(x)\leq\delta_\gamma$. 
Since the sequences $(r_k)_{k\in\N}$ and $(n_{1,f}(x,r_k(x),\gamma))_{k\in\N}$ satisfy the Properties (F1) and (F2), it follows that $(s_k)_{k\in\N}$ and $(n_{1,f\times g}((x,y),s_k(x,y),\gamma))_{k\in\N}$ also satisfy them and with the same $m_{\gamma}$. Since this holds for every $\gamma>0$, we conclude that $f\times g$ is ft-sensitive.
\end{proof}
This is also true in the case of time-1 maps of Anosov flows and the proof is basically the same, with the direction of the flow acting as the equicontinuous homeomorphism. We also prove that the existence of continua with hyperbolic behavior and controlling the first increasing time of balls of the space implies first-time sensitivity. 
\begin{theorem}
Let $f:X\rightarrow X$ be a sensitive homeomorphism of a compact metric space $(X,d)$. If there are $0<\lambda_1\leq\lambda_2<1$ such that for each ball $B(x,r)$ there is a continuum $C_{x,r}\subset B(x,r)$ that controls the first increasing time of $B(x,r)$, with $\diam(C_{x,r})\geq r$ and satisfying:
\begin{equation}\label{hiperbolicocontrolado}
    \lambda_1^{-n}d(y,z)\leq d(f^n(y),f^n(z))\leq\lambda_2^{-n}d(y,z)
  \end{equation}
for every $n\in\N$ and $y,z\in C_{x,r}$, then $f$ is first-time sensitive.
\end{theorem}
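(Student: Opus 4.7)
The strategy is to convert the hyperbolic estimate on the controlling continua $C_{x,r}$ into explicit bounds on $n_1(x,r,\gamma)$, and then verify (F1) and (F2) for a constant geometric sequence $r_k(x)=r_1\mu^{k-1}$ chosen once and for all.

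Writing $a=-\log\lambda_1$ and $b=-\log\lambda_2$, iterating the hypothesized two-sided estimate on $C_{x,r}$ yields the diameter bound
\[\lambda_1^{-n}\diam(C_{x,r})\leq\diam(f^n(C_{x,r}))\leq\lambda_2^{-n}\diam(C_{x,r}),\]
which, combined with the controlling hypothesis $n_1(x,r,\gamma)=n_1(C_{x,r},\gamma)$ and the inclusion $r\leq\diam(C_{x,r})\leq 2r$, provides, for $r<\gamma/2$, the two-sided control
\[\frac{\log(\gamma/(2r))}{b}<n_1(x,r,\gamma)\leq 1+\frac{\log(\gamma/r)}{a}.\]
The crucial point is that the right-hand side depends on $x$ only through $r$.

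Now fix $\mu\in(0,1)$ and set $r_k(x)=r_1\mu^{k-1}$ as a constant function on $X$ with $r_1$ small. For (F1), substitute $r_{k+1}=\mu r_k$ and subtract the lower bound for $n_1(x,r_k,\gamma)$ from the upper bound for $n_1(x,r_{k+1},\gamma)$; this produces an expression of the form
\[C_0(\mu,\lambda_1,\lambda_2)+\bigl(\tfrac{1}{a}-\tfrac{1}{b}\bigr)\log(\gamma/r_k),\]
and with the correct sign of $\tfrac{1}{a}-\tfrac{1}{b}$ coming from the ordering of $\lambda_1,\lambda_2$, together with the hypothesis $r_k\leq\gamma$, the residual is non-positive and the difference is bounded above by a constant. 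The lower bound $n_1(x,r_{k+1},\gamma)\geq n_1(x,r_k,\gamma)$ is automatic from $B(x,r_{k+1})\subset B(x,r_k)$. For (F2), comparing the same two bounds for $n_1(x,r_k,\gamma)$ and $n_1(x,r_k,\eps)$ at a single radius $r_k\leq\gamma$ yields, after the analogous cancellation, a bound of the form $1+\log(\eps/\gamma)/a+O_{\lambda_1,\lambda_2}(1)$. Taking $m_\gamma$ to be the maximum of the constants obtained from (F1) and (F2) completes the verification.

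The main obstacle is the sign bookkeeping: the two-sided estimate on $n_1$ leaves a residual $\log(\gamma/r)(1/b-1/a)$ which would blow up as $r\to 0$ if absorbed in the wrong direction. The reason everything works is that the definition of ft-sensitivity only requires (F1) and (F2) on the range $r_k\leq\gamma$, exactly the range on which $\log(\gamma/r_k)\geq 0$, so the correct inequality between $\lambda_1$ and $\lambda_2$ forces this residual to be non-positive and drop out. Once this point is taken care of the rest of the argument is elementary algebra.
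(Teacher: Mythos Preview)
Your argument is correct and follows essentially the same route as the paper's proof: both convert the two-sided hyperbolic estimate on $C_{x,r}$ into a two-sided bound on $n_1(x,r,\gamma)$ and then verify (F1) and (F2) for a constant geometric sequence of radii. The paper works directly with the inequalities and makes the concrete choice $r_k=2\lambda_1^k\eps$ (so effectively $\mu=\lambda_1$), whereas you pass to logarithms and keep $\mu$ free; this makes the key cancellation of the residual $(\tfrac{1}{a}-\tfrac{1}{b})\log(\gamma/r_k)$ more transparent, but the underlying mechanism---that the restriction $r_k\leq\gamma$ forces this term to have the favourable sign---is identical in both proofs.
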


\begin{proof}
Let $c>0$ be a sensitivity constant of $f$, $\eps\in(0,c]$, and define 
$$r_k(x)=2\lambda_1^{k}\eps \,\,\,\,\,\, \text{for every} \,\,\,\,\,\, x\in X \,\,\,\,\,\, \text{and every} \,\,\,\,\,\, k\in\N.$$ Choose $a\in\N$ such that $\lambda_2^a<\frac{1}{4}$ and let $m_\eps=a+1$.
For each ball $B(x,r_k(x))$ consider a continuum $C_{x,r_k(x)}\subset B(x,r_k(x))$ as in the hypothesis. Since $\diam(C_{x,r_k(x)})\geq r_k(x)$, there are $y,z\in C_{x,r_k(x)}$ such that $d(y,z) = r_k(x)$. Thus, 
\[
d(f^k(y), f^k(z))\;\geq\; \lambda_1^{-k}d(y,z) 
\;= \;2\lambda_1^{-k}\lambda_1^k\eps
\;=\;2\eps
\;>\;\eps,
\] and this implies that 
$$n_1(x,r_k(x),\eps)\leq k \,\,\,\,\,\, \text{for every} \,\,\,\,\,\, k\in\N.$$ Also, for each $k\geq a$,
\[
d(f^{k-a}(y),f^{k-a}(z))\;\leq\;\lambda_2^{-k+a}d(y,z)
\;=\;2\lambda_1^k\lambda_2^{-k+a}
\eps\;\leq\; 2\lambda_2^a\eps
\;<\;\dfrac{\eps}{2}
\;<\;\eps
\] (the second inequality is ensured by $\lambda_1^k\lambda_2^{-k}\leq 1$, since by hypothesis $\lambda_1\leq\lambda_2$). This implies that
$$n_1(C_{x,r_k(x)},\eps)\geq k-a,$$ and since $C_{x,r_k(x)}$ controls the first increasing time of $B(x,r_k(x))$, it follows that 
$$n_1(x,r_k(x),\eps)\geq k-a.$$
Thus,
\[k-a\leq n_1(x,r_k(x),\eps)\leq k\] and, then
$$|n_1(x,r_{k+1}(x),\eps)-n_1(x,r_k(x),\eps)|\leq |k+1 - (k-a)|=a+1$$ for every $k\geq a$ and every $x\in X$. 
Now, for each $\gamma\in(0,\eps)$ consider $\ell_\gamma, k_\gamma\in\mathbb{N}$ satisfying \[2\lambda_2^{k_\gamma}\eps\leq 2\lambda_1^{\ell_\gamma+1}\eps\leq\gamma<2\lambda_1^{\ell_\gamma}\eps,\] and let $$m_\gamma=\max\{k_\gamma,|k_\gamma-\ell_\gamma+1|\}.$$
Thus, for each $k\geq\max\{\ell_\gamma,k_\gamma\}$, 
\[d(f^{k-\ell_\gamma}(y),f^{k-\ell_\gamma}(z))\geq \lambda_1^{\ell_\gamma-k}d(y,z) =2\lambda_1^{\ell_\gamma}\eps>\gamma\] and
\[d(f^{k-k_\gamma}(y),f^{k-k_\gamma}(z))\leq2\lambda_2^{k_\gamma-k}\lambda_1^k\eps\leq2\lambda_2^{k_\gamma}\eps<\gamma\] and, hence,
\[k-k_\gamma\leq n_1(x,r_k(x),\gamma)\leq k-\ell_\gamma.\]
Therefore,
\[\begin{array}{rcl}
|n_1(x,r_{k+1}(x),\gamma)-n_1(x,r_{k}(x),\gamma)|
&\leq & |k+1-\ell_\gamma - (k-k_\gamma)|\\
&=& |k_\gamma-\ell_\gamma+1|
\end{array}\]
 and
\[|n_1(x,r_k(x),\eps)-n_1(x,r_k(x),\gamma)|\leq k-(k-k_\gamma) = k_\gamma\] for every $k$ such that $r_k(x)\leq \gamma$, ensuring Properties (F1) and (F2).
\end{proof}
%
Recall that for a partially hyperbolic diffeomorphism, the strong unstable manifold of $x\in M$ is the submanifold tangent to $E^u(x)$ and is denoted by $W^{uu}_{\eps}(x)$. The Stable Manifold Theorem ensures that the strong unstable manifolds satisfy the estimates (\ref{hiperbolicocontrolado}) of the previous theorem. Thus, partially hyperbolic diffeomorphisms where strong unstable manifolds (or some sub-manifold of them) control the increasing times of the balls of the space are first-time sensitive.

In the discussion about local cw-unstable continua of partially hyperbolic diffeomorphisms, the strong unstable manifolds seem to play a central role and following question seems natural to consider:

\begin{question}
Are local cw-unstable continua of partially hyperbolic diffeomorphisms necessarily strong unstable manifolds?
\end{question}

We prove that this question can be answered affirmatively in the case of the product of a linear Anosov diffeomorphism of $\mathbb{T}^n$ with the identity $id$ of $\mathbb{S}^1$.
\begin{proposition}
If $f_A$ is a linear Anosov diffeomorphism of the Torus $\mathbb{T}^n$ and $id$ is the identity map of $\mathbb{S}^1$, then for the product $f_A\times id$ on $\mathbb{T}^{n+1}$, the continua in $\mathcal{F}^u$ are strong unstable manifolds.
\end{proposition}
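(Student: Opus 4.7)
The plan is to exploit the product structure of $F=f_A\times id$ on $\mathbb{T}^{n+1}\cong\mathbb{T}^n\times\mathbb{S}^1$. Using a product metric biLipschitz-equivalent to the Euclidean one (concretely, the sup of the two factor metrics), small balls factor as $B((x,\theta),r)=B_{\mathbb{T}^n}(x,r)\times B_{\mathbb{S}^1}(\theta,r)$, and because the second coordinate is fixed by $F$,
$$F^{m}\bigl(B((x,\theta),r)\bigr)=f_A^{m}\bigl(B_{\mathbb{T}^n}(x,r)\bigr)\times B_{\mathbb{S}^1}(\theta,r).$$
Given $C\in\mathcal{F}^{u}$ written as $C=\lim_{k}F^{n_k}(\overline{B(F^{-n_k}(x,\theta),r_{n_k})})$, the fact that Hausdorff limits distribute over Cartesian products, together with $r_{n_k}\to 0$, yields
$$C=C'\times\{\theta\},\qquad C'=\lim_{k\to\infty}f_A^{n_k}\bigl(\overline{B_{\mathbb{T}^n}(f_A^{-n_k}(x),r_{n_k})}\bigr),$$
so everything reduces to analysing $C'$ inside $\mathbb{T}^n$.

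Next I would lift to the universal cover and use linearity. Write $A\in GL(n,\mathbb{Z})$ for the hyperbolic lift of $f_A$ and $\mathbb{R}^n=E^u\oplus E^s$ for the $A$-invariant splitting, and fix an adapted norm in which $A|_{E^s}$ contracts by $\lambda<1$. The increasing-time condition $n_1(F^{-n_k}(x,\theta),r_{n_k},\gamma)\in(n_k,n_k+m_\gamma]$, combined with the fact that the $\mathbb{S}^1$-factor only contributes $2r_{n_k}$ to the diameter, forces $\diam(f_A^{n_k}(B(x_k,r_{n_k})))\le\gamma$ where $x_k=f_A^{-n_k}(x)$; choosing $\gamma$ below the injectivity radius lets us carry out the computation in $\mathbb{R}^n$, where linearity gives $A^{n_k}(\overline{B(x_k,r_{n_k})})=x+A^{n_k}(\overline{B(0,r_{n_k})})$. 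Decomposing any $v=v_u+v_s\in B(0,r_{n_k})$,
$$\|A^{n_k}v_s\|\le\lambda^{n_k}\|v_s\|\le\lambda^{n_k}r_{n_k}\to 0,$$
so the $E^s$-component of $C'-x$ collapses in the limit and $C'\subset x+E^u=W^{uu}_{f_A}(x)$.

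Combining the two steps,
$$C=C'\times\{\theta\}\;\subset\;W^{uu}_{f_A}(x)\times\{\theta\}=W^{uu}_{F}(x,\theta),$$
so every local cw-unstable continuum at $(x,\theta)$ sits inside the strong unstable manifold. Theorem~\ref{teoremacontinuosinst} provides a uniform lower bound $\diam(C)\ge\delta>0$, while the upper bound in the first-increasing-time condition and Proposition~\ref{crescimentouniforme} force the unstable extent $\|A^{n_k}|_{E^u}\|\,r_{n_k}$ to remain comparable to $\gamma$; together with connectedness of $C'$ this identifies $C$ with a genuine arc of $W^{uu}_F(x,\theta)$ (cleanly so when $\dim E^u=1$, as in the cat map case).

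I expect the main obstacle to be the clean bookkeeping of the three coexisting scales inside the iterated ball: the central $\mathbb{S}^1$-thickness of order $r_{n_k}$, the stable thickness of order $\lambda^{n_k}r_{n_k}$, and the unstable extent of order $\gamma$. One has to verify that the first two are truly negligible in the Hausdorff limit, so that the first-increasing-time constraint is genuinely governed by the $E^u$-direction; once that dominance is established, linearity of $A$ and the product form of $F$ deliver the conclusion essentially automatically.
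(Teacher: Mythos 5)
Your proof is correct, and the overall structure matches the paper's: factor the ball via a product (sup) metric, note that the $\mathbb{S}^1$-component $\overline{B_{\mathbb{S}^1}(\theta,r_{n_k})}$ collapses to the point $\{\theta\}$ because $r_{n_k}\to 0$ while $F$ acts as the identity there, and then conclude that $C=C'\times\{\theta\}$ with $C'$ an $\eps$-unstable continuum for $f_A$ on $\mathbb{T}^n$.

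The one place you diverge from the paper is the last step. The paper, having reduced to $C'\subset W^u_\eps(x)$ for the Anosov map $f_A$, simply cites the local product structure of $f_A$ (for a hyperbolic system the local $\eps$-unstable set coincides with the local unstable manifold once $\eps$ is small enough), and is done. You instead lift to the universal cover $\mathbb{R}^n=E^u\oplus E^s$ and use linearity of $A$ directly: the first-increasing-time condition keeps $\diam(f_A^j(B(x_k,r_{n_k})))\le\gamma$ for $j\le n_k$, so taking $\gamma$ below the injectivity radius justifies the Euclidean computation $A^{n_k}(\overline{B(x_k,r_{n_k})})=x+A^{n_k}(\overline{B(0,r_{n_k})})$, and then $\|A^{n_k}v_s\|\le\lambda^{n_k}r_{n_k}\to 0$ kills the $E^s$-direction. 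Both arguments are valid; yours is more elementary and self-contained (it never invokes the general hyperbolic machinery, only the linear algebra of $A$), whereas the paper's is shorter and would carry over verbatim to a nonlinear Anosov map, where your explicit ball-lifting computation would not apply. Your closing remarks about the unstable extent of $C'$ being comparable to $\gamma$ and $C$ being a genuine arc when $\dim E^u=1$ go a bit beyond what the proposition asserts --- the paper only establishes the inclusion $C\subset W^{uu}_\eps(x)$ --- but they are harmless observations and do not affect the correctness of the argument.
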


\begin{proof}
Let $g=f_A\times id$ and $C\in\mathcal{F}^u$. Then $C\in\mathcal{F}^u_{\gamma}$ for some $\gamma\in(0,\eps)$, and, hence, there exist
$x=(y,z)\in \mathbb{T}^n\times\mathbb{S}^1,\;n_k\to\infty$, and $r_{n_k}\to0$ such that
\[C=\lim_{k\rightarrow\infty}g^{n_k}(\overline{B(g^{-n_k}(x),r_{n_k})}) \,\,\,\,\,\, \text{and}\]
\[n_1(g^{-n_k}(x),r_{n_k},\gamma)\in (n_k,n_k+m_\gamma] \,\,\,\,\,\, \text{for every} \,\,\,\,\,\, k\in\N.\]
We will prove that $C\subset\mathbb{T}^n\times\{z\}$. Indeed, in the product metric,
$$B_{\mathbb{T}^n\times\mathbb{S}^1}(x,r)=B_{\mathbb{T}^n}(y,r)\times(z-r,z+r)$$ and, hence, for each $k\in\N$,
$$B(g^{-n_k}(x),r_{n_k})=B_{\mathbb{T}^n}(f_A^{-n_k}(y),r_{n_k})\times(z-r_{n_k},z+r_{n_k}).$$ Since $r_{n_k}\to0$ and $g^{n_k}$ acts as the identity on $(z-r_{n_k},z+r_{n_k})$, it follows that
$$C=\lim_{k\rightarrow\infty}g^{n_k}(\overline{B(g^{-n_k}(x),r_{n_k})})\subset \lim_{k\rightarrow\infty}f_A^{n_k}(\overline{B(f_A^{-n_k}(y),r_{n_k})})\subset\mathbb{T}^n\times\{z\}.$$
Since $g=f_A$ on $\mathbb{T}^n\times\{z\}$, it follows by the local product structure of $f_A$ that $C\subset W^{uu}_{\eps}(x)$.
\end{proof}

The case of partially hyperbolic diffeomorphisms that are the time-1 map of an Anosov flow seems to be similar to the above proposition but we believe this could not be the case for skew-product partially hyperbolic diffeomorphisms. This goes beyond the scope of this paper, though.

\vspace{+0.5cm}

\hspace{-0.4cm}\textbf{Sensitive but not first-time sensitive:}

\vspace{+0.5cm}

In this subsection we write precisely the example we briefly discussed in the introduction of a sensitive homeomorphism of $\mathbb{T}^2$ that is not first-time sensitive and do not satisfy several of important features of the hyperbolic systems. We begin with an irrational flow on the Torus generated by a constant vector field $F$ (whose every orbit is dense in $\mathbb{T}^2$) and multiply $F$ by a non-negative smooth function $g\colon\mathbb{T}^2\to\mathbb{R}$ with a single zero at a point $p\in\mathbb{T}^2$. The flow $\varphi$ generated by the vector field $gF$ has a fixed point on $p$ with one stable orbit (that is dense in the past) one unstable orbit (that is dense in the future) and any orbit distinct from these three is dense in the future and in the past (see Figure \ref{figura:fluxo}).


\begin{figure}[h]
	\centering 
	\includegraphics[width=3cm]{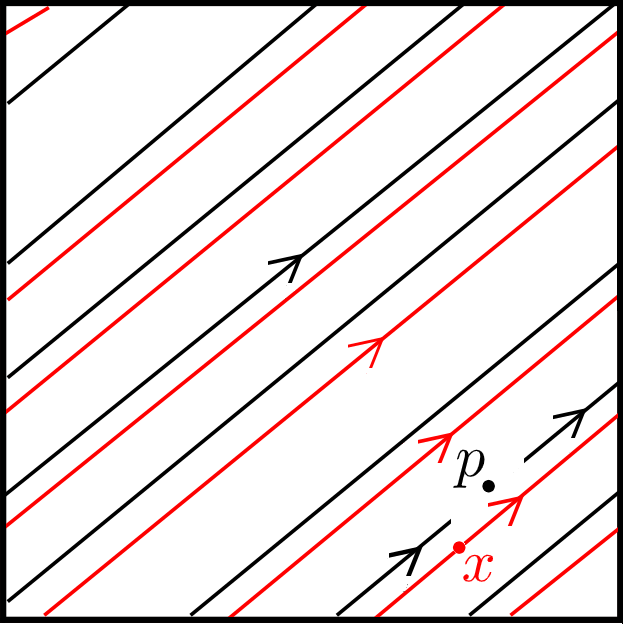} 
	\caption{Modified irrational flow.}
 \label{figura:fluxo}
\end{figure}

\begin{proposition}\label{notft}
If $f\colon\mathbb{T}^2\to\mathbb{T}^2$ is the time-1 map of the flow generated by the vector field $gF$, then $f$ is sensitive but not first-time sensitive.
\end{proposition}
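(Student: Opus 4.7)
The plan is to treat sensitivity and the failure of first-time sensitivity separately. For sensitivity, I will fix $\eps>0$ small enough that $\mathbb{T}^2\setminus\overline{B(p,3\eps)}$ is nonempty. Given $x\in\mathbb{T}^2$ and $\delta>0$, I will split into cases by the forward dynamics of $x$ under $f=\varphi_1$. If $x\in\gamma^s\cup\{p\}$, then $f^n(x)\to p$; since the regular orbits are dense in $\mathbb{T}^2$, I pick $y\in B(x,\delta)$ on a regular orbit, whose forward $f$-orbit is dense, so for some $n$ we have $f^n(x)\in B(p,\eps)$ and $f^n(y)\notin B(p,3\eps)$, yielding $d(f^n(x),f^n(y))>2\eps$. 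Otherwise, the forward $f$-orbit of $x$ is dense (since $x$ lies on $\gamma^u$ or a regular orbit), and the density of $\gamma^s$ in $\mathbb{T}^2$ lets me pick $y\in\gamma^s\cap B(x,\delta)$ so that $f^n(y)\to p$; for $n$ large with $f^n(y)\in B(p,\eps)$ and $f^n(x)\notin B(p,3\eps)$, I again obtain $d(f^n(x),f^n(y))>2\eps$.

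For the failure of first-time sensitivity I will argue by contradiction. Suppose $f$ is ft-sensitive with sensitivity constant $\eps$. By Theorem~\ref{teoremacontinuosinst}(a) there exists $\delta_0>0$ such that $\diam(C^u_\eps(x))\geq\delta_0$ for every $x\in\mathbb{T}^2$. I will pick a sequence $x_n\in\gamma^s$ with $x_n\to p$ (possible since $\gamma^s$ accumulates on $p$) and show $\diam(C^u_\eps(x_n))\to 0$, contradicting the uniform lower bound. The first step is to establish $C^u_\eps(x_n)\subset\gamma^s$. The point $p$ is excluded because $f^{-k}(p)=p$ while $f^{-k}(x_n)$ becomes dense in $\mathbb{T}^2$ (as $\gamma^s$ is backward dense), so the $\eps$-closeness fails for some $k$. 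Any $y\in\gamma^u$ is excluded since $f^{-k}(y)\to p$ while $f^{-k}(x_n)$ visits the complement of any neighborhood of $p$. For $y$ on a regular orbit, $y$ lies on a line $L'$ parallel to but distinct from the line $L$ containing $\gamma^s$, and the reparametrization of these two lines by the modified flow produces tangential phase drift at every close approach to $p$; over infinite backward time this drift exceeds $\eps$, ruling out $y$.

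Once $C^u_\eps(x_n)\subset\gamma^s$ is established, connectedness forces $C^u_\eps(x_n)$ to be a flow-time subarc of $\gamma^s$ through $x_n$, of some flow-time length $\ell_n$. By backward density of $\gamma^s$ there is $k_n$ with $f^{-k_n}(x_n)$ in a region where $g\geq g_0$ for some uniform $g_0>0$; the spatial length of $f^{-k_n}(C^u_\eps(x_n))$ is then at least $\ell_n g_0|F|$, so the $\eps$-confinement forces $\ell_n\leq\eps/(g_0|F|)$. At $x_n$ itself the spatial length of this subarc is at most $\ell_n\cdot\max_{y\in C^u_\eps(x_n)}g(y)\cdot|F|$, which tends to $0$ as $x_n\to p$ because $g$ is continuous with $g(p)=0$. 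This yields $\diam(C^u_\eps(x_n))\to 0$, the desired contradiction. The principal obstacle is the exclusion of regular-orbit $y$ from $C^u_\eps(x_n)$; this step requires quantitative control of the accumulated phase drift between two nearby reparametrized lines, which is precisely the non-equicontinuous feature introduced by the singular slowdown at $p$.
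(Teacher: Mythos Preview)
Your sensitivity argument is correct and matches the paper's: both exploit that $\gamma^s$ is dense (so its points lie in every ball) and that forward orbits off $\gamma^s$ are dense (so they separate from points on $\gamma^s$).

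For the failure of first-time sensitivity, your strategy and the paper's agree on the crucial step: one must show that for $x\in\gamma^s$ the local unstable set $W^u_\eps(x)$ (or at least its $x$-component $C^u_\eps(x)$) lies inside the local flow-arc through $x$. Your ``phase drift'' description is exactly the kinematic-expansiveness mechanism the paper invokes from \cite{art}: when $f^{-n_k}(x)\to p$, the transverse distance between parallel lines is preserved, so $f^{-n_k}(y)$ stays uniformly away from $p$; then $x$ lingers near $p$ for arbitrarily many iterates while $y$ escapes in bounded time, producing separation.

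Where the two proofs diverge is the contradiction you draw afterwards, and here your argument has a gap. You claim the spatial length of $f^{-k_n}(C^u_\eps(x_n))$ is at least $\ell_n g_0|F|$ because $f^{-k_n}(x_n)$ lies in $\{g\geq g_0\}$; but the integrand $g(\varphi_{t-k_n}(x_n))$ need not stay above $g_0$ for all $t\in[a_n,b_n]$, so this lower bound is unjustified as written. (It can be repaired by first using $\diam(f^{-k_n}(C^u_\eps(x_n)))\leq 2\eps$ to confine the whole arc to a $2\eps$-neighbourhood of the region and choosing the region so this neighbourhood still avoids $p$, but you have not said this.) Similarly, your final step needs the arc to be a single flow-time interval and to lie near $p$; the first requires the stronger containment $W^u_\eps(x_n)\subset C_\eps(x_n)$ rather than merely $C^u_\eps(x_n)\subset\gamma^s$, since connected subsets of the dense leaf $\gamma^s$ in the $\mathbb{T}^2$-topology need not be flow-time intervals.

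The paper avoids all of this by drawing a different, cleaner contradiction: once the cw-unstable continuum through $x\in\gamma^s$ is known to be a flow-arc in $\gamma^s$, its forward iterates converge to $p$ and hence cannot grow, directly contradicting Proposition~\ref{crescimentouniforme}. No estimate on flow-time length or on $\diam(C^u_\eps(x_n))$ is needed.
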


\begin{proof}
To prove that $f$ is sensitive we just note that in every open ball of the space $B(x,r)$ there are points in the stable orbit of $p$ and points that are not in the stable orbit of $p$. Recall that both the backward part of the stable orbit of $p$ and the forward orbit of a point that is not in the stable orbit of $p$ are dense on $\mathbb{T}^2$. Thus, we can find $y,z\in B(x,r)$ such that $y\in W^s(p)$ and the future orbit of $z$ is dense on $\mathbb{T}^2$ and, hence, there exists $k\in\N$ such that $d(f^k(y),f^k(z))>\frac{1}{2}$.
To prove that $f$ is not first-time sensitive, we use techniques from \cite{art} where it is proved that $\varphi$ is not geometric expansive but is kinematic expansive, meaning that the separation of orbits is not geometric, since generic orbits are parallel straight lines, but local different orbits should be separated in time. We formalize this argument as follows. 

For each $x\in\mathbb{T}^2$ and $\eps>0$ let $C_{\eps}(x)$ be the connected component of $x$ in the flow orbit of $x$ contained in $B(x,\eps)$. We will prove the existence of $\eps>0$ such that
\begin{equation}\label{Cuflow}
W^u_{\eps}(x)\subset C_{\eps}(x) \,\,\,\,\,\, \text{for every} \,\,\,\,\,\, x\neq p.
\end{equation}
If $x$ belongs to the stable orbit of $p$, then (\ref{Cuflow}) contradicts the existence of cw-unstable continua containing $x$, that should increase in the future (see Theorem \ref{teoremacontinuosinst} and Proposition \ref{crescimentouniforme}), but since it is a small segment of flow orbit contained in the stable orbit of $p$, then it could not increase in the future.

Let $\eps>0$ be such that if $y\in W^u_{\eps}(x)$ and segments of orbits of $x$ and $y$ with length $\eps$ are $\alpha$-distant from each other, then the segments of orbit of $f^{-k}(x)$ and $f^{-k}(y)$ with length $\eps$ are also $\alpha$-distant from each other for every $k\in\N$. The existence of $\eps$ follows from the fact that the orbits of the irrational flow are parallel lines and that the orbits of $\varphi$ are contained in the orbits of the irrational flow. If $y\in W^u_{\eps}(x)\setminus C_{\eps}(x)$, then it is in a different local orbit but its past orbit follows the past orbit of $x$. Let $\alpha>0$ be the distance between the segments of orbits of length $\eps$ of $x$ and $y$. Choose times $(n_k)_{k\in\N}$ such that $f^{-n_k}(x)$ converge to $p$. The choice of $\eps$ ensure that $f^{-n_k}(y)$ remain at a distance greater than $\frac{\alpha}{2}$ from $p$ and since $p$ is the only fixed point of $\varphi$, there is a lower bound $n\in\N$ for the number of iterates of $f$ to ensure the orbit of $y$ is $2\eps$-distant from $p$, while, the number of iterates for the orbit of $x$ goes to infinity since $f^{-n_k}(x)$ converge to $p$. This ensures the existence $k\in\N$ such that
$$d(f^{-n_k-n}(x),f^{-n_k-n}(y))>\eps$$
contradicting that $y\in W^u_{\eps}(x)$.
\end{proof}

\section{Positive topological entropy}

In the study of chaotic systems, the topological entropy is an important invariant that measures the complexity of the dynamics. Positivity of topological entropy is strongly related with chaotic properties of such systems. It is known that positive topological entropy implies distinct notions of chaos (see \cite{Down} for an example and the references therein). Let us define topological entropy precisely. During this whole section $f\colon X\to X$ denotes a homeomorphism of a compact metric space. Given $n\in\N$ and $\delta>0$, we say that $E\subset X$ is $(n,\delta)$-\emph{separated}
if for each $x,y\in E$, $x\neq y$, there is $k\in \{0,\dots,n\}$ such that
$\dist(f^k(x),f^k(y))>\delta$.
Let $s(n,\delta)$ denotes the maximal cardinality of an $(n,\delta)$-separated subset $E\subset X$ (since $X$ is compact, $s(n,\delta)$ is finite).
Let\[
 h(f,\delta)=\limsup_{n\to\infty}\frac 1n\log s_n(f,\delta).
\]
Note that $h(f,\delta)$ increases as $\delta$ decreases to $0$ and define $$h(f)=\lim_{\delta\to 0}h(f,\delta).$$

The example in Proposition \ref{notft} is a sensitive homeomorphism of $\mathbb{T}^2$ with zero topological entropy. Indeed, it is proved in \cite{Young} that every continuous flow on a compact two-manifold has zero topological entropy. Kato proved that cw-expansive homeomorphisms have positive topological entropy, when defined on compact metric spaces with positive topological dimension \cite{Kato2}. The existence of local unstable continua with several properties that resemble hyperbolic unstable manifolds, assures the existence of several distinct $(n,\delta)$-separated points (see Theorem 4.1 in \cite{Kato1} for more details).

In this subsection we obtain similar results in the case of first-time sensitive homeomorphisms. It is important to note that we were not able to prove that first-time sensitivity always implies positive topological entropy. The idea we explore here follows the proof of Kato for cw-expansive homeomorphisms exchanging local unstable continua by the local cw-unstable continua. It presented some difficulties that we were only able to circumvent with some additional hypotheses. We explain them in what follows.

The first difference to note is that for ft-sensitive homeomorphisms, the existence of local unstable continua that are also stable, and hence do not increase in the future (see remarks \ref{rmkshift1} and \ref{rmkshift2}), do not allow us to start the proof with any local unstable continua. The choice of a local cw-unstable continuum is enough to deal with this problem since they increase in the future. The second difference, illustrated by the following example, recalls that in the proof of Kato after iterating an unstable continuum to increase its diameter we can take a pair of distinct unstable subcontinua that can again be iterated and increase, and this can be done indefinitely in the future. But this is not necessarily true in the case of ft-sensitive homeomorphisms since local cw-unstable continua can contain several proper stable subcontinua.

\begin{example}
Let $c=\frac{1}{4}$ and $\eps\in(0,c)$. Consider the cw-unstable continuum as in the Proposition \ref{caracterizationfushift}:
\[D=\prod_{i\in\mathbb{Z}}([x_i-2^i\eps,x_i+2^i\eps]\cap [0,1]).\]
Choose $M\in\N$ such that $2^M\eps>c$ and, hence, $$\diam(\sigma^M(D))>c.$$
For each $m\geq M$, let $$y_m=\min([x_m-2^m\eps,x_m+2^m\eps]\cap [0,1]) \mbox{ and }$$ 
$$z_m=\max([x_m-2^m\eps,x_m+2^m\eps]\cap [0,1]).$$ 
Define continua $C_1$ and $C_2$ as follows:
\[C_1 = \prod_{i<0}\{x_{i+M}\}\times [y_M,y_M+1/12]\times\prod_{i>0}[y_{i+M},y_{i+M}+1/12]\] and
\[C_2=\prod_{i<0}\{x_{i+M}\}\times [z_M-1/12,z_M]\times\prod_{i>0}[z_{i+M}-1/12,z_{i+M}].\]
Note that $C_1$ and $C_2$ are subcontinua of $\sigma^M(D)$ satisfying
\begin{enumerate}
    \item $d(C_1,C_2)>1/12$,
    \item $\diam(\sigma^n(C_1))=1/12$ \,\,for every \,\,$n\in\mathbb{N}$, and
    \item $\diam(\sigma^n(C_2))= 1/12$ \,\,for every \,\,$n\in\mathbb{N}$.
\end{enumerate}

\qed
\end{example}

This property of indefinitely splitting unstable continua to increase in the future is the core of the proof of Kato of positivity of topological entropy in the case of cw-expansive homeomorphisms. In the following we state this as a definition for any homeomorphism and repeat the proof of Kato to prove it implies positive topological entropy. We denote by $\mathcal{C}(X)$ the set of all subcontinua of $X$ and by $d_H$ the Hausdorff distance on $\mathcal{C}(X)$ defined as 
$$d_H(C,C')=\inf\{\eps>0; \,\,\, C\subset B(C',\eps) \,\,\,\,\,\, \text{and} \,\,\,\,\,\, C'\subset B(C,\eps)\}.$$

\begin{definition}\label{splittoincrease}
We say that $C\in\mathcal{C}(X)$ can be \textit{indefinitely split to increase} if there exist $\delta>0$ and $M\in\mathbb{N}$ such that $\diam\;f^M(C)\geq \delta$ and for each $n\in\N$ and $(i_1,\dots,i_n)\in\{0,1\}^n$ there exists $C_{i_1i_2\cdots i_n}\in\mathcal{C}(X)$ satisfying:
 \begin{enumerate}
     \item $\diam\;f^{M}(C_{i_1i_2\cdots i_n})\geq \delta$,
     \item $C_{i_1i_2\cdots i_{n-1}i_n}\subset f^M(C_{i_1i_2\cdots i_{n-1}})$, and
     \item $d_H(C_{i_1i_2\cdots i_{n-1}0},C_{i_1i_2\cdots i_{n-1}1})\geq \frac{\delta}{3}$.
 \end{enumerate}
 \end{definition}

\begin{theorem}\label{posent}
Let $f\colon X\to X$ be a homeomorphism of a compact metric space. If there exists a continuum that can be indefinitely split to increase, then $f$ has positive topological entropy.
\end{theorem}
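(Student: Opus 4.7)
The plan is to adapt Kato's strategy for cw-expansive homeomorphisms and produce, for each $n\in\mathbb{N}$, roughly $2^n$ points whose orbit segments of length $nM$ are mutually separated by a uniform $\eta>0$. By the Bowen--Dinaburg formula this gives $h(f,\eta)\geq (\log 2)/M>0$, hence $h(f)>0$. First, I unwind the nesting $C_{i_1\cdots i_k}\subset f^M(C_{i_1\cdots i_{k-1}})$ of Definition \ref{splittoincrease} to obtain $f^{-(n-j)M}(C_{i_1\cdots i_n})\subset C_{i_1\cdots i_j}$ for all $0\leq j\leq n$. Setting $Q_s:=f^{-nM}(C_s)$ for $s=(i_1,\ldots,i_n)\in\{0,1\}^n$, any $z_s\in Q_s$ satisfies $f^{jM}(z_s)\in C_{s|_j}$ for every $j$; thus for distinct $s,t$ first differing at level $k$, the iterates $f^{kM}(z_s)$ and $f^{kM}(z_t)$ lie respectively in the sibling continua $C_{s|_k}$ and $C_{t|_k}$, which are $d_H$-separated by $\delta/3$.

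Second, I would convert this $d_H$-separation into pointwise separation by a witness-selection procedure. For each prefix $p=(i_1,\ldots,i_{k-1})$, the condition $d_H(C_{p\cdot 0},C_{p\cdot 1})\geq\delta/3$ yields a witness point $w_p\in C_{p\cdot\gamma(p)}$ on one of the two sides (with $\gamma(p)\in\{0,1\}$ realizing the supremum) satisfying $d(w_p,C_{p\cdot(1-\gamma(p))})\geq\delta/3$. I would then inductively choose the representatives $z_s$ by descending the tree so that, for each distinguishing level $k$ of each pair $(s,t)$, at least one of the orbit points $f^{kM}(z_s)$, $f^{kM}(z_t)$ coincides with (or is close to) the witness $w_{s|_{k-1}}$. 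Any such configuration forces $d(f^{kM}(z_s),f^{kM}(z_t))\geq\delta/3$, so that the $2^n$ points become mutually $(nM,\delta/3)$-separated, and the entropy bound follows.

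The main obstacle is precisely this witness-matching step: the set of points in $C_{s|_k}$ reachable as $f^{kM}(z_s)$ with $z_s\in Q_s$ is the subcontinuum $f^{-(n-k)M}(C_s)\subset C_{s|_k}$, which need not contain the witness $w_{s|_{k-1}}$. I would circumvent this by passing to a subtree of positive (though not necessarily full) branching ratio, retaining enough splittings per level so that witnesses are reachable along a positive fraction of branches. This still leaves at least $2^{cn}$ mutually separated representatives for some $c\in(0,1]$, which preserves positivity of entropy. A pigeonhole argument over levels and sibling pairs, together with the property that $\diam(f^M(C_{i_1\cdots i_n}))\geq\delta$ (giving spatial room within each node to place witnesses close to the accessible subcontinuum), appears to be the technical heart of the proof.
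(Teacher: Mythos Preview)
Your first paragraph \emph{is} the paper's proof: pick $y_s\in C_s$ arbitrarily, set $z_s=f^{-nM}(y_s)$, use the nesting (2) inductively to get $f^{jM}(z_s)\in C_{s|_j}$ for all $0\leq j\leq n$, and at the first level $k$ where $s$ and $t$ differ invoke condition (3) to conclude $d(f^{kM}(z_s),f^{kM}(z_t))\geq\delta/3$. That is exactly what the paper does, in a few lines, with no witness-selection and no subtree pruning.

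Where you diverge is in correctly noticing that the Hausdorff bound $d_H(C_{p0},C_{p1})\geq\delta/3$ does not by itself force $d(a,b)\geq\delta/3$ for \emph{arbitrary} $a\in C_{p0}$ and $b\in C_{p1}$; the two continua could even overlap. The paper simply asserts this implication, which is a slip: condition (3) in Definition~\ref{splittoincrease} should be read with the gap distance $\inf\{d(a,b):a\in A,\,b\in B\}$ rather than $d_H$, as in Kato's original argument. With that single reading the paper's direct proof is complete and your first paragraph already finishes the job. Your proposed workaround, by contrast, is not a proof: the claim that one can pass to a subtree of ``positive branching ratio'' along which the Hausdorff witnesses become reachable from the deeper nested subcontinua $f^{-(n-k)M}(C_s)$ is left entirely as a hope, and no concrete pigeonhole or combinatorial construction is supplied. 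The honest repair is to strengthen hypothesis (3) to gap-separation (which every construction in the paper --- cw-expansive, shift, partially hyperbolic --- actually produces) and then run the straightforward argument of your first paragraph.
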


\begin{proof}
 
Let $C\in\mathcal{C}(X)$, $\delta>0$, $M\in\mathbb{N}$ and $(C_{i_1i_2\cdots i_n})_{i_1,\dots,i_n,n}$ be as in the previous definition.
For each $n\in\N$ and $(i_1,\dots,i_n)\in\{0,1\}^n$ choose $$y_{i_1i_2\ldots i_{n}}\in C_{i_1i_2\cdots i_n},$$ and let $$x_{i_1i_2\cdots i_n}=f^{-nM}(y_{i_1i_2\ldots i_{n}}).$$
Applying condition (2) of above definition inductively we obtain that $$x_{i_1i_2\cdots i_n}\in C \,\,\,\,\,\, \text{for every}\,\,\,\,\,\, (i_1,\dots,i_n)\in\{0,1\}^n \,\,\,\,\,\, \text{and} \,\,\,\,\,\, n\in\N.$$
We prove that for each $n\in\mathbb{N}$ the set
\[A_n = \{x_{i_1i_2\cdots i_n} \;|\; (i_1,\dots,i_n)\in\{0,1\}^n \}\] is $(nM,\delta/3)$-separated. Indeed, if $x_{i_1\cdots i_{n}},x_{j_1\cdots j_{n}}\in A_n$ are distinct, then there exists $k\in\{1,2,\ldots, n\}$ such that 
$$j_l=i_l \,\,\,\,\,\, \text{for every} \,\,\,\,\,\, l\in\{1,\dots,k-1\}, \,\,\,\,\,\, \text{and} \,\,\,\,\,\, j_k\neq i_k.$$ Assume without loss of generality that $i_k=0$ and $j_k=1$. Condition (3) ensures that
$$d_H(C_{i_1i_2\cdots i_{k-1}0}, C_{i_1i_2\cdots i_{k-1}1})\geq \delta/3,$$
and since condition (2) ensures that
$$f^{kM}(x_{i_1i_2\cdots i_{k-1}0i_{k+1}\cdots i_{n}})\in C_{i_1i_2\cdots i_{k-1}0} \,\,\,\,\,\, \text{and}$$
$$f^{kM}(x_{i_1i_2\cdots i_{k-1}1j_{k+1}\cdots j_{n}})\in C_{i_1i_2\cdots i_{k-1}1},$$
it follows that
\[d(f^{kM}(x_{i_1\cdots i_{n}}),f^{kM}(x_{j_1\cdots j_{n}}))\geq\delta/3.\]
Since for each $n\in\N$, $A_n$ has $2^n$ elements and is $(nM,\delta/3)$-separated, it follows that 
$$s(nM,\delta/3)\geq2^n \,\,\,\,\,\, \text{for every} \,\,\,\,\,\, n\in\N.$$ 
Thus, 
\[\begin{array}{rcl}
h(f,\delta/3) & = &\limsup_{n\rightarrow\infty}\frac{1}{n}\cdot\log s(n,\delta/3)\\
\\
&\geq&\limsup_{n\rightarrow\infty}\left(\frac{1}{nM}\cdot\log s(nM,\delta/3)\right)\\
\\
&\geq &\limsup_{n\rightarrow\infty} \frac{1}{nM}\cdot\log 2^n\\
\\
&\geq &\limsup_{n\rightarrow\infty} \frac{n}{nM}\cdot\log 2\\
\\
&=&\frac{1}{M}\cdot \log 2\;>\;0
\end{array}\]
and, hence, $h(f)>0$.
\end{proof}

\begin{proposition}
Every local unstable continuum of a cw-expansive homeomorphism of a Peano continuum can be indefinitely split to increase.
\end{proposition}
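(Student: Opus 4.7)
The plan is to build the indefinite-splitting tree by iterating $f^M$ and carving two well-separated subcontinua out of each forward iterate. Fix the cw-expansivity constant $c$, choose $\eps\in(0,c/2)$ small enough that Theorem \ref{cw} gives a uniform diameter lower bound $\delta_1>0$ on every $\eps$-unstable continuum, and let $\delta\in(0,\eps)$ be the constant from Kato's Proposition 2.2 quoted earlier in the paper.

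The first ingredient I would set up is a uniform forward-growth lemma for subcontinua of the given local unstable continuum $C$: for each $\eta>0$ there should exist $N(\eta)\in\N$ such that every subcontinuum $A\subset C$ with $\diam(A)\geq\eta$ has $\diam(f^j(A))\geq\eps$ for some $j\leq N(\eta)$. The argument is Hausdorff-limit compactness. A counterexample sequence would converge to a nondegenerate continuum $A^{\ast}\subset C$ with $\diam(f^k(A^{\ast}))\leq\eps$ for every $k\geq 0$, and, since $A^{\ast}\subset C\subset W^u_\eps(x_0)$, also $\diam(f^{-j}(A^{\ast}))\leq 2\eps$ for every $j\geq 0$. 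As $c>2\eps$, the whole of $A^{\ast}$ would lie in $\Gamma_c(x)$ for any $x\in A^{\ast}$, contradicting the total disconnectedness of dynamical balls under cw-expansiveness.

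Next I would fix $M$ large enough that $\diam(f^M(C))\geq 3\delta$ (holding by cw-expansiveness plus the growth-controlled property of $C$) and $M\geq N(\eta_0)$ for a threshold $\eta_0>0$ to be chosen, and build the tree inductively. Given $C_{i_1\cdots i_n}$ with $\diam(f^M(C_{i_1\cdots i_n}))\geq 3\delta$, I pick $p_0,p_1\in f^M(C_{i_1\cdots i_n})$ at distance $\geq 3\delta$ and let $C_{i_1\cdots i_n i}$ be the closure of the component of $p_i$ in $f^M(C_{i_1\cdots i_n})\cap B(p_i,\delta/2)$. The boundary-bumping theorem for continua forces $\diam(C_{i_1\cdots i_n i})\geq\delta/2$, while the containment in $\overline{B}(p_i,\delta/2)$ combined with $d(p_0,p_1)\geq 3\delta$ gives $d_H(C_{i_1\cdots i_n 0},C_{i_1\cdots i_n 1})\geq 2\delta\geq\delta/3$; this verifies conditions (2) and (3) of the definition.

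Condition (1) is where the main obstacle lives. To close it I would pull the entire construction back inside $C$: the continuum $D_{i_1\cdots i_n i}:=f^{-(n+1)M}(C_{i_1\cdots i_n i})$ is a subcontinuum of $C$, so the forward-growth lemma of the first step applies, and Proposition 2.2, provided $\diam(D_{i_1\cdots i_n i})\leq\delta$, converts a single iterate reaching $\eps$ into the persistent bound $\diam(f^{j'}(D_{i_1\cdots i_n i}))\geq\delta$ for all later $j'$; taking $j'=(n+1)M$ yields $\diam(f^M(C_{i_1\cdots i_n i}))\geq\delta$. The hard part will be guaranteeing a uniform positive lower bound on $\diam(D_{i_1\cdots i_n i})$ across all depths $n$, because $f^{-(n+1)M}$ may contract the carved subcontinua arbitrarily as $n$ grows, so that the forward-growth time $N(\diam(D_{i_1\cdots i_n i}))$ could in principle exceed $(n+1)M$. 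Circumventing this will require either calibrating the split radius inductively to compensate for the contraction, or replacing the full pullback by a one-step argument coupled with a modified forward-growth lemma valid for continua whose past iterates are only eventually bounded.
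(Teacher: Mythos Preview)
Your diagnosis of the difficulty at the end is honest, but the difficulty is an artifact of your choice to pull everything back to $C$; the paper avoids it entirely by working at the current level of the tree.

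The first thing to notice is that your forward-growth lemma holds, with the same Hausdorff-limit proof, uniformly for \emph{all} $\eps$-unstable continua, not just for subcontinua of the fixed $C$: a limit of $\eps$-unstable continua is again $\eps$-unstable, so if it also fails to grow past $\eps$ in forward time you contradict cw-expansiveness directly. This is exactly the paper's fact (1): for each $\gamma>0$ there is $m_\gamma$ with $n_1(A,\eps)\leq m_\gamma$ for every $\eps$-unstable continuum $A$ of diameter $\geq\gamma$. Combined with Proposition~2.2 (the paper's fact (2)) this gives $\diam(f^j(A))\geq\delta$ for all $j\geq m_\gamma$, for every such $A$.

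The second point, and the one that dissolves your ``hard part'', is that the carved pieces are themselves $\eps$-unstable. Assume without loss of generality that $\diam(C)\leq\delta$ (otherwise pass to a subcontinuum; the splitting tree for the subcontinuum serves for $C$ as well). If you choose each $C_{i_1\cdots i_n i}\subset f^M(C_{i_1\cdots i_n})$ with diameter in $[\delta/3,\delta)$ then $C_{i_1\cdots i_n i}$ is $\eps$-unstable: its deep pullback $B:=f^{-(n+1)M}(C_{i_1\cdots i_n i})\subset C$ has diameter $\leq\delta$, so if some intermediate iterate $f^{-j}(C_{i_1\cdots i_n i})=f^{(n+1)M-j}(B)$ had diameter $>\eps$, Proposition~2.2 applied to $B$ would force $\diam(f^{(n+1)M}(B))=\diam(C_{i_1\cdots i_n i})\geq\delta$, contradicting the choice $\diam(C_{i_1\cdots i_n i})<\delta$.

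Hence at every depth $n$ the piece $C_{i_1\cdots i_n i}$ is an $\eps$-unstable continuum of diameter $\geq\delta/3$, and the \emph{same} constant $M=\max\{m_\gamma,m_{\delta/3}\}$ gives $\diam(f^M(C_{i_1\cdots i_n i}))\geq\delta$, with no dependence on $n$. This is the paper's argument (stated rather tersely there). There is no need to control $\diam(D_{i_1\cdots i_n i})$ from below, and the calibration you anticipate is unnecessary: apply the growth estimate to $C_{i_1\cdots i_n i}$ itself, not to its pullback.
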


\begin{proof}
Let $c>0$ be a cw-expansivity constant of $f$, and $\eps\in(0,c)$. The following was proved by Kato in \cite{Kato1}:
\begin{enumerate}
    \item for each $\gamma\in(0,\eps)$ there exists $m_\gamma\in\mathbb{N}$ such that $n_1(C,\eps)\leq m_\gamma$ for every $\eps$-unstable continuum $C$ with $\diam(C)\geq\gamma$, and
    \item there exists $\delta>0$ such that $\diam(f^n(C))\geq \delta$ for every $n\geq n_1(C,\eps)$ and every $\eps$-unstable continuum $C$.
\end{enumerate}
Let $C$ be an $\eps$-unstable continuum, choose $\gamma\in(0,\diam(C))$, consider $m_{\gamma}$ and $\delta$ given by (1) and (2) above and let $M=\max\{m_\gamma,m_{\delta/3}\}$. Thus, $\diam (f^M(C))\geq \delta$ and we can choose $C_0$ and $C_1$ subcontinua of $f^M(C)$ such that 
$$\diam(C_i)\geq \delta/3 \,\,\,\,\,\, \text{for} \,\,\,\,\,\, i\in\{0,1\}, \,\,\,\,\,\, \text{and} \,\,\,\,\,\, d_H(C_0, C_1)\geq \delta/3.$$ Since $M\geq m_{\delta/3}$, it follows that $\diam(f^M(C_i))\geq\delta$, for each $i\in\{0,1\}$ and, again, we can choose continua $C_{i0}$ and $C_{i1}$ with diameter larger than $\delta/3$ and $d_H(C_{i0},C_{i1})\geq\delta/3$. Inductively, we can define the family $(C_{i_1i_2\cdots i_n})_{i_1,\dots,i_n,n}$ satisfying items (1), (2), and (3) in Definition \ref{splittoincrease}.
\end{proof}

In the case of first-time sensitive homeomorphisms, the local cw-unstable continua increase when iterated forward. Thus, to prove that they can be split to increase it is enough to prove that they can be split by continua in $\mathcal{F}^u$. The next definition is a formalization of this idea.

\begin{definition}\label{splitinfu}
We say that $C\in\mathcal{F}^u$ can be \textit{indefinitely split in $\mathcal{F}^u$} if there exists $\delta>0$ such that $C\in\mathcal{F}^u_{\delta}$ and for each $n\in\N$ and $(i_1,\dots,i_n)\in\{0,1\}^n$ there exists $C_{i_1i_2\cdots i_n}\in\mathcal{C}(X)$ satisfying:
 \begin{enumerate}
     \item $C_{i_1i_2\cdots i_n}\in\mathcal{F}^u_{\gamma}$ for some $\gamma\geq \delta$
     \item $C_{i_1i_2\cdots i_{n-1}i_n}\subset f^{2m_{\delta}}(C_{i_1i_2\cdots i_{n-1}})$, and
     \item $d_H(C_{i_1i_2\cdots i_{n-1}0},C_{i_1i_2\cdots i_{n-1}1})\geq \frac{\delta}{3}$.
 \end{enumerate}
 \end{definition}

\begin{proposition}
For a first-time sensitive homeomorphism, if a continuum can be indefinitely split in $\mathcal{F}^u$, then it can be indefinitely split to increase.
\end{proposition}

\begin{proof}
If $C\in\mathcal{F}^u$ can be indefinitely split in $\mathcal{F}^u$, then there exists $\delta>0$ and a family $(C_{i_1i_2\cdots i_n})_{i_1i_2\cdots i_n,n}$ satisfying conditions (1), (2) and (3) in Definition \ref{splitinfu}. Let $M=m_{2\delta}$ and consider $\delta'\in(0,\delta)$ given by Lemma \ref{continuonaodecresce} such that 
$$\diam(f^{M}(C))\geq\delta' \,\,\,\,\,\, \text{and} \,\,\,\,\,\, \diam(f^{M}(C_{i_1i_2\cdots i_n}))\geq\delta'$$
for every $(i_1,\dots,i_n)\in\{0,1\}^n$ and every $n\in\N$. Conditions (1), (2) and (3) of Definition \ref{splittoincrease} follow easily from that.
\end{proof}

A consequence of this is that the existence of a continuum in $\mathcal{F}^u$ that can be indefinitely split in $\mathcal{F}^u$ would imply positive topological entropy for a first-time sensitive homeomorphism. A difficulty that appears is that for $C\in\mathcal{F}^u$ we can choose $x,y\in f^{2m_\delta}(C)$ such that $d(x,y)\geq\frac{\delta'}{3}$ and Theorem \ref{teoremacontinuosinst} ensures the existence of continua $C_0,C_1\in\mathcal{F}^u_{\delta}$ containing $x$ and $y$, respectively, but we could not prove that $C_0$ and $C_1$ are contained in $f^{2m_{\delta}}(C)$. Thus, the following is still an open question:

\begin{question}\label{3}
Do continua in $\mathcal{F}^u$ of a first-time sensitive homeomorphism can be indefinitely split in $\mathcal{F}^u$?
\end{question}

Assuming that the answer for Question \ref{3} could be negative, we tried a distinct approach to prove positive topological entropy that we explain below. In the case of cw-expansive homeomorphisms, it is proved in \cite{ACCV3} the existence of a hyperbolic cw-metric, generalizing the hyperbolic metric in the case of expansive homeomorphisms in \cite{Fa}. We explain the hyperbolic cw-metric below and discuss the existence of a hyperbolic ft-metric for first-time sensitive homeomorphisms with additional assumptions on $\mathcal{F}^u$. After that, we explain how the existence of a hyperbolic ft-metric is enough to prove positive topological entropy. Let 
$$E=\{(p,q,C): C\in \C(X),\, p,q\in C\}.$$
For $p,q\in C$ denote $C_{(p,q)}=(p,q,C)$.
The notation $C_{(p,q)}$ implies that $p,q\in C$ and that $C\in\C(X)$.
Define
$$f(C_{(p,q)})=f(C)_{(f(p),f(q))}$$
and consider the sets
\[
\C^s_\eps(X)=\{C\in\C(X)\;:\;\diam(f^n(C))\leq\eps\, \text{ for every }\, n\geq 0\} \,\,\,\,\,\, \text{and}
\]
\[
\C^u_\eps(X)=\{C\in\C(X)\;:\;\diam(f^{-n}(C))\leq\eps\, \text{ for every }\, n\geq 0\}.
\]
These sets contain exactly the $\eps$-stable and $\eps$-unstable continua of $f$, respectively. 

\begin{theorem}[Hyperbolic $cw$-metric-\cite{ACCV}]
\label{teoCwHyp}
If $f\colon X\to X$ is a cw-expansive homeomorphism of a compact metric space $X$, then there is a function $D\colon E\to\R$ satisfying the following conditions.
\begin{enumerate}
\item Metric properties:
\vspace{+0.2cm}
\begin{enumerate}
 \item $D(C_{(p,q)})\geq 0$ with equality if, and only if, $C$ is a singleton,\vspace{+0.1cm}
 \item $D(C_{(p,q)})=D(C_{(q,p)})$,\vspace{+0.1cm}
 \item $D([A\cup B]_{(a,c)})\leq D(A_{(a,b)})+D(B_{(b,c)})$, $a\in A, b\in A\cap B, c\in B$.
\end{enumerate}
\vspace{+0.2cm}
\item Hyperbolicity: there exist constants $\lambda\in(0,1)$ and $\varepsilon>0$ satisfying
\vspace{+0.2cm}
	\begin{enumerate}
	\item if $C\in\C^s_\eps(X)$ then $D(f^n(C_{(p,q)}))\leq 4\lambda^nD(C_{(p,q)})$ for every $n\geq 0$,\vspace{+0.1cm}
	\item if $C\in\C^u_\eps(X)$ then $D(f^{-n}(C_{(p,q)}))\leq 4\lambda^nD(C_{(p,q)})$ for every $n\geq 0$.
	\end{enumerate}
\vspace{+0.2cm}
\item Compatibility: for each $\delta>0$ there is $\gamma>0$ such that
\vspace{+0.2cm}
\begin{enumerate}
\item if $\diam(C)<\gamma$, then $D(C_{(p,q)})<\delta$\,\, for every $p,q\in C$,\vspace{+0.1cm}
\item if there exist $p,q\in C$ such that $D(C_{(p,q)})<\gamma$, then $\diam(C)<\delta$.
\end{enumerate}
\end{enumerate}
\end{theorem}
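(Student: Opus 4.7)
The plan is to adapt Fathi's construction of an adapted hyperbolic metric for expansive homeomorphisms to the continuum setting, where diameters of iterated subcontinua play the role Fathi assigned to pointwise distances between iterated pairs. Let $c>0$ be a cw-expansivity constant.

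First I would establish a uniform decay lemma: there exist $\eps_1\in(0,c)$ and $N_0\in\N$ such that every $C\in\C^s_{\eps_1}(X)$ satisfies $\diam(f^{N_0}(C))\leq\eps_1/2$, and symmetrically every $C\in\C^u_{\eps_1}(X)$ satisfies $\diam(f^{-N_0}(C))\leq\eps_1/2$. The proof is by contradiction using compactness of the hyperspace $\C(X)$: a sequence of counterexamples would admit a Hausdorff limit that is a non-singleton continuum simultaneously $\eps_1$-stable and $\eps_1$-unstable at a common point, contradicting cw-expansiveness. Iterating this lemma yields exponential decay with rate $\lambda=2^{-1/N_0}\in(0,1)$.

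Next I would build $D$ via an infimum over chain decompositions of $C$ joining $p$ to $q$:
\[
D(C_{(p,q)})=\inf\sum_{i=1}^m w(C_i),
\]
where the infimum runs over finite chains $p=p_0,p_1,\ldots,p_m=q$ with $p_{i-1},p_i\in C_i$ and $C=\bigcup_{i=1}^m C_i$, and where $w(C)=\lambda^{\ell(C)}\min(\diam(C),\eps_1)$ with $\ell(C)$ recording how long the orbit of $C$ stays of diameter at most $\eps_1$ (in either time direction). The metric axioms of item (1) then follow formally from the chain formulation: symmetry and the triangle inequality for unions are immediate, and positivity uses cw-expansiveness to force $\ell(C)<\infty$ on every non-singleton subcontinuum, since otherwise $C$ would be simultaneously $\eps_1$-stable and $\eps_1$-unstable. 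For the hyperbolicity of item (2), the key observation is that iterating $C\in\C^s_{\eps_1}(X)$ forward by $n$ steps increases $\ell$ of each chain piece by at least $n$, hence multiplies its $w$-value by at most $\lambda^n$ times a bounded correction factor; the constant $4$ absorbs the bounded rechaining needed at threshold crossings, quantitatively controlled by the uniform decay lemma.

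The main obstacle will be establishing the compatibility property (3), in particular its second direction, that small $D$-value forces small diameter. This requires a quantitative bound on the number and total weight of pieces in an optimal chain, and is precisely where cw-expansiveness (as opposed to mere sensitivity) is essential: it prevents an arbitrarily fine accumulation of small continua that are simultaneously long-time stable and long-time unstable, which would otherwise allow complex chains of tiny total $D$-value to cover large-diameter continua. Verifying this compatibility also recovers, a posteriori, that the total weight of any near-optimal chain is uniformly controlled by constants depending only on $\eps_1$ and $\lambda$.
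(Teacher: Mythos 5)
The overall skeleton you set up — a uniform decay lemma proved by compactness of the hyperspace, the rate $\lambda=2^{-1/N_0}$ extracted from it, and a metric $D$ defined as an infimum over chain decompositions of $C$ — is indeed the right shape and parallels the construction in \cite{ACCV3} and the paper's proof of the ft-metric analogue (Theorem \ref{ft-metric}). But there are two substantive problems with how you fill in the details.

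First, your weight $w(C)=\lambda^{\ell(C)}\min(\diam(C),\eps_1)$ carries a diameter factor that the construction in \cite{ACCV3} and the paper's $\rho(C)=\lambda^{n_1(C,\eps)}$ deliberately do \emph{not} carry, and this factor breaks the clean hyperbolicity estimate. Pushing a chain piece $A_i\subset C$ forward by $n$ steps gives $\ell(f^n(A_i))\geq\ell(A_i)+n$, which indeed contributes a clean $\lambda^n$, but the factor $\min(\diam(f^n(A_i)),\eps_1)$ is only bounded by $\eps_1$, not by a constant multiple of $\min(\diam(A_i),\eps_1)$: a stable subcontinuum is free to grow toward size $\eps_1$ before ultimately shrinking. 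The ratio $w(f^n(A_i))/(\lambda^n w(A_i))$ can therefore be as large as $\eps_1/\diam(A_i)$, which is unbounded over small pieces. Dropping the diameter factor removes the problem and is in fact what makes the paper's $\rho$ work: there $\rho(f^{-n}(C))=\lambda^n\rho(C)$ exactly, and the constant $4$ enters elsewhere.

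Second, the place where the constant $4$ actually comes from is not ``bounded rechaining at threshold crossings'' but the sandwich inequality $D(C_{(p,q)})\leq\rho(C)\leq 4\,D(C_{(p,q)})$, established via a quantitative subadditivity lemma of the form
\[
\rho\Bigl(\bigcup_{i=1}^n C_i\Bigr)\leq 2\rho(C_1)+4\rho(C_2)+\cdots+4\rho(C_{n-1})+2\rho(C_n)
\]
for any chain $C_1,\dots,C_n$ of continua with consecutive nonempty intersections. This lemma is the crux of the whole construction: it forces the infimum not to undercut $\frac14\rho(C)$ and thereby simultaneously delivers positivity of $D$ on non-singletons, the factor $4$ in the hyperbolicity estimate, and the second direction of compatibility (small $D$ implies small diameter, since $\rho(C)\leq 4D(C_{(p,q)})$ and $\rho$ small forces $\diam(C)$ small). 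You identify compatibility (3b) as the main obstacle, correctly, but the argument you offer — that cw-expansiveness ``prevents an arbitrarily fine accumulation of small continua'' — is not a proof; it needs to be replaced by the subadditivity lemma and the resulting two-sided comparison of $D$ with $\rho$. Until that lemma is formulated and proved, items (1)(a), (2), and (3)(b) are not actually established.
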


In the case of first-time sensitive homeomorphisms, we could not expect to obtain a function in the whole set $E$ since there could be continua in arbitrarily small dynamical balls. Then we will restrict the set $E$ considering only continua in $\mathcal{F}^u$ as follows 
$$E^u = \{(p,q,C)\;:\;C\in\mathcal{F}^u,\;p,q\in C\}$$
and obtain a similar result. We will need though to add two hypothesis to $\mathcal{F}^u$ so that the function $D$ and its properties can be written precisely. In the first we ask that $\mathcal{F}^u$ is invariant by $f^{-1}$, that is, 
$$\text{if} \,\,\,\,\,\, C\in\mathcal{F}^u, \,\,\,\,\,\, \text{then} \,\,\,\,\,\, f^{-1}(C)\in \mathcal{F}^u.$$ In the second we ask that $\mathcal{F}^u$ is closed by connected unions, that is
$$\text{if} \,\,\,\,\,\, A,B\in\mathcal{F}^u \,\,\,\,\,\, \text{and} \,\,\,\,\,\, A\cap B\neq\emptyset, \,\,\,\,\,\, \text{then} \,\,\,\,\,\, A\cup B\in\mathcal{F}^u.$$ 
We tried to prove these hypotheses are always satisfied for first-time sensitive homeomorphisms, but there were some technical details that we could not circumvent. The following is the ft-metric that we were able to obtain in the case of first-time sensitive homeomorphisms. The proof is an adaptation of the proof of the cw-metric switching cw-expansiveness and the properties of the local unstable continua proved by Kato for ft-sensitivity and the properties of the local cw-unstable continua we proved in Section 2.
\begin{theorem}[Hyperbolic ft-metric]\label{ft-metric}
\label{teoCwHyp}
Let $f\colon X\to X$ be a first-time sensitive homeomorphism, of a compact and connected metric space $X$ satisfying hypothesis (P1) and (P2), with a sensitivity constant $\eps>0$. If $\mathcal{F}^u$ is invariant by $f^{-1}$ and closed by connected unions, then there is a function $D\colon E^u\to\R$ satisfying the following conditions.
\begin{enumerate}
\item Metric properties:
\vspace{+0.2cm}
\begin{enumerate}
 \item $D(C_{(p,q)})> 0$ for every $C_{(p,q)}\in E^u$,\vspace{+0.1cm}
 \item $D(C_{(p,q)})=D(C_{(q,p)})$,\vspace{+0.1cm}
 \item $D([A\cup B]_{(a,c)})\leq D(A_{(a,b)})+D(B_{(b,c)})$, $a\in A, b\in A\cap B, c\in B$.
\end{enumerate}
\vspace{+0.2cm}
\item Hyperbolicity: there exists $\lambda\in(0,1)$ satisfying
\vspace{+0.2cm}
	\begin{enumerate}
	\item if $C_{(p,q)}\in E^u$, then $D(f^{-n}(C_{(p,q)}))\leq 4\lambda^nD(C_{(p,q)})$ for every $n\geq 0$.
	\end{enumerate}
\vspace{+0.2cm}
\item Compatibility: for each $\delta>0$ there is $\gamma>0$ such that
\vspace{+0.2cm}
\begin{enumerate}
\item if $\diam(C)<\gamma$, then $D(C_{(p,q)})<\delta$ for every $p,q\in C$,\vspace{+0.1cm}
\item if there exist $p,q\in C$ such that $D(C_{(p,q)})<\gamma$, then $\diam(C)<\delta$.
\end{enumerate}
\end{enumerate}
\end{theorem}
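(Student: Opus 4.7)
The plan is to adapt the construction of the hyperbolic cw-metric from \cite{ACCV} to the ft-sensitive setting. The essential modifications are that the domain shrinks from $E$ to the smaller $E^u$, and that only past-hyperbolicity need be established since there is no symmetric analog of $\mathcal{F}^u$ playing the role of stable continua. The two closure hypotheses on $\mathcal{F}^u$ are precisely what is needed to keep the construction inside $E^u$: invariance under $f^{-1}$ ensures $f^{-n}(C_{(p,q)})\in E^u$ for every $n\geq 0$, so the hyperbolicity estimate (2a) is even well-posed; and closure under connected unions makes the statement of (1c) meaningful in $E^u$ and is exactly what is needed to run the Fathi-style infimum-over-chains construction below.

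First I would define a pre-distance $\varphi\colon E^u\to(0,\infty)$ by the weighted supremum
\[\varphi(C_{(p,q)})=\sup_{n\geq 0}\mu^n\max\bigl\{d(f^{-n}(p),f^{-n}(q)),\,\diam(f^{-n}(C))\bigr\}\]
for a fixed $\mu>1$ to be tuned. Positivity follows from $\diam(C)>0$ at $n=0$ (every $C\in\mathcal{F}^u$ has $\diam(C)\geq\delta_0$ by Theorem \ref{teoremacontinuosinst}); symmetry is immediate; and the $\sup$ formula gives the fundamental termwise contraction $\varphi(f^{-1}(C_{(p,q)}))\leq\mu^{-1}\varphi(C_{(p,q)})$. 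Finiteness of $\varphi$ requires uniform exponential backward decay of $\diam(f^{-n}(C))$; the last proposition of Section 2 provides only the qualitative estimate $\diam(f^{-n}(C))\leq\alpha$ when $n\geq m_\alpha+1$, so I would extract a genuine exponential rate from the growth of $\gamma\mapsto m_\gamma$ (in the shift example $m_\gamma\asymp\log(1/\gamma)$, yielding $\diam(f^{-n}(C))\lesssim 2^{-n}$) and choose $\mu$ below this rate.

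Next I would promote $\varphi$ to $D$ by the infimum over chains,
\[D(C_{(p,q)})=\inf\biggl\{\sum_{i=0}^{k-1}\varphi\bigl((C_i)_{(p_i,p_{i+1})}\bigr):\,p_0=p,\,p_k=q,\,C_i\in\mathcal{F}^u,\,\bigcup_i C_i=C\biggr\}\]
with consecutive $C_i$ required to intersect, so that the union and each intermediate union lie in $\mathcal{F}^u$ by the closure hypothesis. Symmetry and the triangle inequality (1c) are immediate from the infimum definition; positivity (1a) comes from $\varphi\geq\diam(C_i)>0$ on each summand combined with a uniform lower bound on the number of $\mathcal{F}^u$-pieces required to span a generic $C\in\mathcal{F}^u$; compatibility (3a) comes from evaluating on the one-piece chain $C_0=C$ and using continuity of $\diam$, while (3b) follows from $\varphi\geq d(p_i,p_{i+1})$ on each summand, giving $D\geq\diam(C)$. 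For hyperbolicity (2a), $f^{-1}$ sends admissible chains to admissible chains by invariance of $\mathcal{F}^u$, and each $\varphi$-term contracts by $\mu^{-1}$; taking infima and iterating yields $D(f^{-n}(C_{(p,q)}))\leq\mu^{-n}D(C_{(p,q)})$, with the constant $4$ absorbing the reconciliation between near-optimal chains before and after iteration, exactly as in the cw-metric proof. Setting $\lambda=\mu^{-1}\in(0,1)$ closes the argument.

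The principal obstacle is that bare first-time sensitivity gives no a priori exponential rate for the backward decay $\diam(f^{-n}(C))\to 0$; the rate is dictated by the uncontrolled function $\gamma\mapsto m_\gamma$. To make $\varphi$ finite in full generality one would either impose a logarithmic growth bound such as $m_\gamma=O(\log(1/\gamma))$ as a further standing hypothesis, or replace the supremum by a weighted series $\sum\alpha_n\diam(f^{-n}(C))$ with weights tailored to the sequence of first-increasing times, trading contraction rate for guaranteed convergence. The secondary subtlety lies in Step 2: uniformly bounding from above the minimal number of $\mathcal{F}^u$-pieces required to cover any $C\in\mathcal{F}^u$, so that positivity (1a) and compatibility (3b) can be arranged simultaneously. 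This is where closure of $\mathcal{F}^u$ under connected unions enters most essentially, and I expect it to be the technical heart of the proof.
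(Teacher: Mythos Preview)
Your construction has a genuine gap at its foundation, and you have in fact correctly identified it yourself: the pre-distance $\varphi(C_{(p,q)})=\sup_{n\geq0}\mu^n\diam(f^{-n}(C))$ need not be finite under the stated hypotheses, because first-time sensitivity gives no exponential rate for the backward decay $\diam(f^{-n}(C))\to0$. Your suggested fixes---imposing $m_\gamma=O(\log(1/\gamma))$ or switching to a weighted series---would either add hypotheses not in the theorem or sacrifice the clean contraction $\varphi(f^{-1}(C))\leq\mu^{-1}\varphi(C)$ needed for hyperbolicity. A smaller but real secondary gap: your argument for compatibility (3b), that $\varphi\geq d(p_i,p_{i+1})$ telescopes to $D\geq\diam(C)$, actually only yields $D(C_{(p,q)})\geq d(p,q)$, which says nothing about $\diam(C)$ when the marked points happen to be close.

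The paper sidesteps both problems by building the pre-distance from \emph{forward} information instead of backward: set $m=m_{\eps/2}$, $\lambda=2^{-1/m}$, and $\rho(C)=\lambda^{n_1(C,\eps)}$. The forward first-increasing time $n_1(C,\eps)$ is always a finite positive integer for $C\in\mathcal{F}^u$ (Proposition~\ref{crescimentouniforme}), so $\rho$ is automatically finite and positive with no decay-rate issue. Hyperbolicity is then a one-line computation: since $\diam(f^{-j}(C))\leq\eps$ for $j\geq0$, one has $n_1(f^{-n}(C),\eps)=n+n_1(C,\eps)$ exactly, whence $\rho(f^{-n}(C))=\lambda^n\rho(C)$. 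The function $D$ is defined as an infimum over $\mathcal{F}^u$-chains of $\sum\rho(A^i)$, and the whole proof rests on the two-sided comparison $D\leq\rho\leq4D$; the inequality $\rho\leq4D$ follows from a Frink-type lemma showing $\rho(A\cup B)\leq2\max\{\rho(A),\rho(B)\}$ whenever $A,B\in\mathcal{F}^u$ intersect, which in turn comes from property (F2). Compatibility (3b) then reduces to: $\rho(C)$ small forces $n_1(C,\eps)$ large, hence $C\in\mathcal{F}^u_\alpha$ only for small $\alpha$, hence $\diam(C)$ small---no telescoping over marked points is needed.
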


\begin{proof}[Proof of Theorem \ref{ft-metric}]
For each $\gamma\in(0,\eps)$, consider $m_{\gamma}>0$, given by Proposition \ref{decreasing}. Let $m=m_{\frac{\eps}{2}}$, $\lambda=2^{-1/m}$, and define the function
\[\begin{array}{lccl}
 \rho:    &  \mathcal{F}^u & \rightarrow &\mathbb{R}\\
&C&\mapsto &\lambda^{n_1(C,\eps)}.\\
\end{array}\] 
Consider the map $D:E^u\rightarrow \mathbb{R}$ given by
\[D(C_{(p,q)}) = \inf\sum_{i=1}^n\rho(A^i_{(a_{i-1},a_i)})\]
where the infimum is taken over all $n\geq 1$, $a_0=p, a_1, \dots, a_n=q$, and $A^1, \dots, A^n\in\mathcal{F}^u$ such that $C=\bigcup_{i=1}^nA^i$.
The proof of this theorem is based on the following inequalities
\begin{equation}\label{Drho}
    D(C_{(p,q)})\leq \rho(C)\leq 4D(C_{(p,q)})
\end{equation} for every $C_{(p,q)}\in E^u$.

\begin{enumerate}
\item Metric properties: items (b) and (c) are direct consequences of the definition of the function $D$, while item (a) is a consequence of the fact that if $C\in\mathcal{F}^u$, then $n_1(C,\eps)$ is a finite positive number, so $\rho(C)>0$, and then inequalities (\ref{Drho}) ensure that 
$$D(C_{(p,q)})\geq\frac{1}{4}\rho(C)>0.$$

\item Hyperbolicity: If $C\in\mathcal{F}^u$, then $$\diam(f^{-n}(C))\leq\eps \,\,\,\,\,\, \text{for every} \,\,\,\,\,\, n\geq 0,$$ $$n_1(C, \eps)<+\infty \,\,\,\,\,\, \text{and} \,\,\,\,\,\, \diam(f^{n_1(C,\eps)}(C))>\eps.$$ This implies that $$n_1(f^{-n}(C),\eps)=n+n_1(C,\eps) \,\,\,\,\,\, \text{for every} \,\,\,\,\,\, n\geq 0,$$ and, hence, the following holds for every $n\in\N$:
\begin{eqnarray*}
\rho(f^{-n}(C))&=&\lambda^{n_1(f^{-n}(C),\eps)}=\lambda^{n+n_1(C,\eps)}\\
&=&\lambda^n\lambda^{n_1(C,\eps)}=\lambda^n\rho(C).
\end{eqnarray*}
Inequalities (\ref{Drho}) ensure the following holds for every $n\in\N$:
$$D(f^{-n}(C_{(p.q)}))\leq\rho(f^{-n}(C))=\lambda^n\rho(C)\leq4\lambda^n D(C_{(p.q)}).$$
Recall the hypothesis that $\mathcal{F}^u$ is invariant by $f^{-1}$, so $$f^{-n}(C)\in\mathcal{F}^u\,\,\,\,\,\, \text{for every} \,\,\,\,\,\, n\in\N.$$

\item Compatibility: Inequalities (\ref{Drho}) ensure that the compatibility between $\rho$ and $\diam$ is enough to obtain compatibility between $D$ and $\diam$. The compatibility between $\rho$ and $\diam$ is proved as follows:

 \vspace{+0.2cm}

\noindent {(a)} Given $\delta>0$, choose $n\in\mathbb{N}$ such that $\lambda^n<\delta$. Let $\gamma>0$, given by continuity of $f$, be such that if $C\in C(X)$ satisfies $\diam(C)<\gamma$, then $$\diam(f^i(C))<\eps \,\,\,\,\,\, \text{whenever} \,\,\,\,\,\, |i|\leq n.$$ This implies that $n_1(C,\eps)>n$ and, hence, that $$\rho(C)=\lambda^{n_1(C,\eps)}<\lambda^n<\delta.$$

 
 \vspace{+0.2cm}
 
\noindent {(b)} 
Given $\delta>0$, let $\gamma=\lambda^{2m_{\delta}}$. If $\rho(C)<\gamma$, then $$\lambda^{n_1(C,\eps)}<\lambda^{2m_{\delta}} \,\,\,\,\,\, \text{and} \,\,\,\,\,\, n_1(C,\eps)>2m_{\delta}.$$
Proposition \ref{crescimentouniforme} assures 
that $C\notin\mathcal{F}^u_\delta$. Proposition \ref{decreasing} ensures that 
$$C\notin\mathcal{F}^u_\alpha \,\,\,\,\,\, \text{for any} \,\,\,\,\,\, \alpha\in(\delta,\eps).$$ Indeed, if $\alpha>\delta$, then $m_{\alpha}\leq m_{\delta}$ and, hence, $$n_1(C,\eps)>2m_{\delta}\geq2m_{\alpha},$$
and again Proposition \ref{crescimentouniforme} ensures that $C\notin\mathcal{F}^u_\alpha$. Since $C\in\mathcal{F}^u$, it follows that $C\in\mathcal{F}^u_\alpha$ for some $\alpha\in(0,\delta),$ and, hence, 
$$\diam(C)\leq\alpha<\delta.$$
\end{enumerate}


\end{proof}

The first inequality in (\ref{Drho}) is assured by the definition of $D$, while the following result ensures the other inequality. Its proof is an adaptation of Lemma 2.4 of \cite{ACCV3}. 

\begin{lemma}
The function $\rho$ satisfies:
\begin{equation}\label{desiglemma}\rho\left(\bigcup_{i=1}^nC_i\right)\leq2\rho(C_1)+4\rho(C_2)+\cdots +4\rho(C_{n-1})+2\rho(C_n)\end{equation} for all $C_1,\ldots, C_n\in\mathcal{F}^u$ such that $C_i\cap C_{i+1}\neq\emptyset$ for every $i\in\{1,\dots,n-1\}$.

\end{lemma}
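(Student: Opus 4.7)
The plan is to reduce to a two-set inequality and then induct on the length of the chain.

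\textbf{Two-set case.} For $A, B \in \mathcal{F}^u$ with $A \cap B \neq \emptyset$ (so that $A \cup B \in \mathcal{F}^u$ by the closure-under-connected-unions hypothesis of Theorem~\ref{ft-metric}), I aim to establish
$$\rho(A \cup B) \leq 2\rho(A) + 2\rho(B).$$
Let $N = n_1(A \cup B, \eps)$, so $\rho(A \cup B) = \lambda^N$. Since $A \cap B \neq \emptyset$, one has $\diam(f^N(A \cup B)) \leq \diam(f^N(A)) + \diam(f^N(B))$; the left side strictly exceeds $\eps$, so at least one of the summands, say $\diam(f^N(A))$, strictly exceeds $\eps/2$. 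The technical core is a transfer of Property (F2) from balls to continua in $\mathcal{F}^u$: for every $C \in \mathcal{F}^u$ and every $\alpha \in (0, \eps]$,
$$n_1(C, \eps) - n_1(C, \alpha) \leq m_\alpha.$$
I plan to obtain this by mimicking the argument of Proposition~\ref{crescimentoregularcont}: starting from $C = \lim_k f^{n_k}(\overline{B_k})$, apply (F2) to each ball $B_k$, use (F1) to shrink its radius so that the first increasing times with respect to $\eps$ and $\alpha$ land in matching controlled windows, and pass to the Hausdorff limit using continuity of the diameter. Applied to $A$ with $\alpha = \eps/2$, this gives $n_1(A, \eps) \leq N + m$ with $m = m_{\eps/2}$; since $\lambda^m = 1/2$ by construction, $\rho(A) \geq \lambda^{N+m} = \rho(A\cup B)/2$, and the two-set bound follows.

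\textbf{Induction on $n$.} The claim is immediate for $n = 1$ and matches the two-set case for $n = 2$. For $n \geq 3$, split the chain at any intermediate $k \in \{2,\ldots,n-1\}$: set $A = \bigcup_{i=1}^k C_i$ and $B = \bigcup_{i=k}^n C_i$, both in $\mathcal{F}^u$ by closure under connected unions and with $A \cap B \supset C_k \neq \emptyset$. Combining the two-set inequality (in its sharp form $\rho(A\cup B) \leq \rho(A)+\rho(B)$; see obstacle below) with the inductive hypothesis applied to each sub-chain, and then collecting terms, yields exactly the coefficients $2, 4, 4, \ldots, 4, 2$: the shared element $C_k$ picks up coefficient $2 + 2 = 4$ as the sum of the two endpoint contributions from the halves.

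\textbf{Main obstacle.} The principal difficulty is that the naive two-set bound $\rho(A\cup B) \leq 2\rho(A) + 2\rho(B)$ derived above doubles coefficients at each induction step, producing a blow-up like $2^n$ rather than the uniform pattern $(2, 4, \ldots, 4, 2)$. To close the induction cleanly one needs the sharper inequality $\rho(A \cup B) \leq \rho(A) + \rho(B)$, which in turn requires exploiting \emph{both} diameters $\diam(f^N(A))$ and $\diam(f^N(B))$ jointly, together with the quantitative identity $\lambda^{m_a} + \lambda^{m_b} \geq 1$ whenever $a + b > \eps$ (so that $\rho(A) + \rho(B) \geq \lambda^N(\lambda^{m_a}+\lambda^{m_b}) \geq \rho(A\cup B)$). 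Verifying this identity is the delicate quantitative point, and it relies on the non-increasing monotonicity of $\gamma \mapsto m_\gamma$ from Proposition~\ref{decreasing}. If the sharp form ultimately proves out of reach, the weaker two-set bound still yields $\rho(\bigcup_i C_i) \leq 4\sum_i \rho(C_i)$ by iterated unpeeling, which is enough for the upper bound $\rho(C) \leq 4\,D(C_{(p,q)})$ invoked in the proof of Theorem~\ref{ft-metric}.
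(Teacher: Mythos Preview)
Your two-set analysis is essentially correct and matches the paper: from $\diam(f^N(A\cup B))>\eps$ one of $A,B$ has $f^N$-diameter exceeding $\eps/2$, and the transfer of (F2) to continua in $\mathcal{F}^u$ yields $\rho(A\cup B)\leq 2\max\{\rho(A),\rho(B)\}$. The paper states this max-form explicitly; your additive form $\rho(A\cup B)\leq 2\rho(A)+2\rho(B)$ is a weaker consequence.

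The gap is in the induction. Splitting at an \emph{arbitrary} index $k$ and invoking the sharp inequality $\rho(A\cup B)\leq \rho(A)+\rho(B)$ is not justified: your proposed identity $\lambda^{m_a}+\lambda^{m_b}\geq 1$ for $a+b>\eps$ fails in general. Take $a$ close to $\eps$ and $b$ tiny; monotonicity gives $m_a\leq m_{\eps/2}=m$, so $\lambda^{m_a}\geq \lambda^m=1/2$, but $m_b$ can be arbitrarily large, forcing $\lambda^{m_b}$ near $0$ and the sum strictly below $1$. Your fallback claim that ``iterated unpeeling'' of the weak bound gives $\rho(\bigcup C_i)\leq 4\sum\rho(C_i)$ is also incorrect: peeling one set at a time via $\rho(A\cup B)\leq 2\rho(A)+2\rho(B)$ produces coefficients growing like $2^n$, not a uniform $4$.

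What makes the induction close is not a sharper two-set inequality but a \emph{choice} of split point. Using monotonicity of $\rho$ under increasing unions, one locates $1<r<n$ with
\[
2\,\rho(C_1\cup\cdots\cup C_{r-1})<\rho(C)\leq 2\,\rho(C_1\cup\cdots\cup C_r).
\]
The left inequality together with the two-set max bound (applied to $A=C_1\cup\cdots\cup C_{r-1}$ and $B=C_r\cup\cdots\cup C_n$) forces $\rho(C)\leq 2\,\rho(C_r\cup\cdots\cup C_n)$. Hence
\[
\rho(C)=\tfrac{\rho(C)}{2}+\tfrac{\rho(C)}{2}\leq \rho(C_1\cup\cdots\cup C_r)+\rho(C_r\cup\cdots\cup C_n),
\]
and now the inductive hypothesis on each half gives exactly the coefficients $2,4,\ldots,4,2$ (the shared $C_r$ picks up $2+2$). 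This is the classical Frink/Fathi chain-lemma trick; it is the missing idea in your induction.
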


\begin{proof}
First, we will prove this result for $n=2$. Consider $C=A\cup B$ with $A,B\in\mathcal{F}^u$ and $A\cap B\neq\emptyset$. 
We claim that either
\begin{equation}\label{tempocrescimentoAouB}n_1(A,\eps)\leq m+n_1(C,\eps)\;\;\;\;\mbox{ or }\;\;\;\;n_1(B,\eps)\leq m+n_1(C,\eps).\end{equation}
Indeed, we know that $\diam\;f^{n_1(C,\eps)}(C)>\eps$, so either
\[\diam\;f^{n_1(C,\eps)}(A)>\dfrac{\eps}{2}\;\;\;\;\mbox{ or }\;\;\;\;\diam\;f^{n_1(C,\eps)}(B)>\dfrac{\eps}{2}.\] 
Assume we are in the first case (the second is analogous). Since $A\in\mathcal{F}^u$, property (F2) ensures that
$$|n_1(A,\eps/2) - n_1(A,\eps)|\leq m,$$
and since $$n_1(A,\eps/2)\leq n_1(C,\eps)\leq n_1(A,\eps)$$
it follows that
$$n_1(A,\eps) - n_1(C,\eps)\leq m,$$
so the first inequality in
(\ref{tempocrescimentoAouB}) holds and the claim is proved. If $n_1(A,\eps)\leq m+n_1(C,\eps)$, then
\[\rho(A)=\lambda^{n_1(A,\eps)}\geq \lambda^{m+n_1(C,\eps)}=\lambda^m\lambda^{n_1(C,\eps)} = \dfrac{1}{2}\rho(C),\] since $0<\lambda<1$ and $\lambda^m=1/2$, and this implies $2\rho(A)\geq\rho(C)$. Similarly, if $n_1(B,\eps)\leq m+n_1(C,\eps)$ we obtain $2\rho(B)\geq \rho(C)$ and conclude that
\begin{equation}\label{rhoCigual2max}
\rho(C)\leq 2\max\{\rho(A),\rho(B)\}.
\end{equation} This completes the proof in the case $n=2$.
Arguing by induction, suppose that given $n\geq 3$, the conclusion of the lemma holds for every $k<n$ and let $C=\bigcup_{i=1}^nC_i$ with $$C_i\cap C_{i+1}\neq\emptyset \,\,\,\,\,\, \text{for every} \,\,\,\,\,\, i\in\{1,\ldots, n-1\} \,\,\,\,\,\, \text{and}$$ $$C_i\in\mathcal{F}^u \,\,\,\,\,\, \text{for every} \,\,\,\,\,\, i\in\{1,\ldots, n\}.$$ In what follows the hypothesis that $\mathcal{F}^u$ is closed by connected unions is used in a few steps, though we will not mention it. Consider the following inequalities
\begin{equation}\label{desigualdades}
\rho(C_1)\leq \rho(C_1\cup C_2)\leq\cdots\leq \rho(C_1\cup\cdots\cup C_{n-1})\leq \rho(C).
\end{equation}
If $\rho(C)\leq 2\rho(C_1)$, then (\ref{desiglemma}) is proved and if $2\rho(C_1\cup\cdots\cup C_{n-1})<\rho(C)$, then (\ref{rhoCigual2max}) implies that $\rho(C)\leq 2\rho(C_n)$, which also implies (\ref{desiglemma}). Thus, we assume that 
\[2\rho(C_1)<\rho(C)\leq2\rho(C_1\cup\cdots\cup C_{n-1}).\] This and (\ref{desigualdades}) imply that there is $1<r<n$ such that
\[2\rho(C_1\cup\cdots\cup C_{r-1})<\rho(C)\leq 2\rho(C_1\cup\cdots\cup C_r).\] The first inequality and (\ref{rhoCigual2max}) imply that
\[\rho(C)\leq 2\rho(C_r\cup\cdots \cup C_n).\] Thus,
\[\rho(C)=\dfrac{\rho(C)}{2}+\dfrac{\rho(C)}{2}\leq \rho(C_1\cup\cdots\cup C_r)+\rho(C_r\cup\cdots\cup C_n).\] Since (\ref{desiglemma}) holds for these two terms, by the induction assumption, the proof ends.
\end{proof}

\begin{theorem}\label{posent}

Let $f\colon X\to X$ be a first-time sensitive homeomorphism, of a compact and connected metric space $X$ satisfying hypothesis (P1) and (P2). If $\mathcal{F}^u$ is invariant by $f^{-1}$ and closed by connected unions, then $f$ has positive topological entropy.
\end{theorem}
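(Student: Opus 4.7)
The plan is to reduce the statement to the entropy argument already established: I would show that some $C_\emptyset\in\mathcal{F}^u$ can be indefinitely split in $\mathcal{F}^u$ in the sense of Definition \ref{splitinfu}, which via the proposition preceding Question \ref{3} produces a continuum that can be indefinitely split to increase, and hence yields $h(f)>0$ via Theorem \ref{posent} (the first version, above Definition \ref{splitinfu}). The hyperbolic ft-metric $D$ from Theorem \ref{ft-metric}, whose existence is guaranteed precisely by the two additional hypotheses in the statement, is the tool that makes the recursive splitting work.

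First I would fix $\delta\in(0,\eps)$ small, set $M=2m_\delta$, and take $C_\emptyset\in\mathcal{F}^u$ through some point of $X$ via Theorem \ref{teoremacontinuosinst}. By Corollary \ref{continuonaodecresce} there is $\delta'>0$ depending only on $\delta$ such that $\diam(f^M(C))\geq\delta'$ for every $C\in\mathcal{F}^u_\gamma$ with $\gamma\geq\delta$. Choosing $a_0,a_1\in f^M(C_\emptyset)$ with $d(a_0,a_1)\geq\delta'/2$ and pulling them back to $p_0,p_1\in C_\emptyset$ by $f^{-M}$, the inequalities $D\leq\rho\leq 4D$ used in the proof of Theorem \ref{ft-metric}, together with the compatibility condition (3) of the ft-metric, yield a uniform lower bound $D([C_\emptyset]_{(p_0,p_1)})\geq\beta_0>0$ depending only on $\delta'$.

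Second, I would use the definition of $D$ as an infimum over chains $C_\emptyset=A^1\cup\cdots\cup A^k$ with $A^i\in\mathcal{F}^u$ and consecutive overlaps to select a near-optimal chain connecting $p_0$ to $p_1$. The hypothesis that $\mathcal{F}^u$ is closed under connected unions allows me to partition the chain at an index $r$ into $B_0=A^1\cup\cdots\cup A^r$ and $B_1=A^{r+1}\cup\cdots\cup A^k$, both in $\mathcal{F}^u$, each carrying a $\rho$-value bounded below in terms of $\beta_0$ and hence both in $\mathcal{F}^u_\gamma$ for some $\gamma\geq\delta$. Invariance of $\mathcal{F}^u$ under $f^{-1}$ then ensures that the identity $\rho(f^{-n}(B_j))=\lambda^n\rho(B_j)$ used to establish hyperbolicity propagates through the iteration, so the construction can be repeated on each $B_j$. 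From these two halves I would extract subcontinua $C_0,C_1\in\mathcal{F}^u$ of $f^M(C_\emptyset)$ with $d_H(C_0,C_1)\geq\delta/3$, using compatibility of $D$ with $\diam$ to translate the $\rho$-separation of $B_0,B_1$ into the required Hausdorff separation of their forward iterates, and iterate.

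The main obstacle lies in this extraction step: producing genuine subcontinua of $f^M(C_{i_1\cdots i_{n-1}})$ that themselves belong to $\mathcal{F}^u$. Neither hypothesis alone is enough -- closure under connected unions is needed to reassemble chains of small elements of $\mathcal{F}^u$ into larger elements, and invariance under $f^{-1}$ is needed both to give the ft-metric from Theorem \ref{ft-metric} and to ensure that the first-increasing-time condition defining $\mathcal{F}^u$ interacts well with the induction. Together these hypotheses force the splitting data at each level to stay within $\mathcal{F}^u$, which is precisely the affirmative answer to the restriction of Question \ref{3} under the present assumptions, and the reason the theorem holds even though the general question remains open.
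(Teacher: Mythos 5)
Your plan diverges from the paper in a way that leaves a genuine gap at the step you yourself flag as the main obstacle. You aim to produce, inside $f^M(C_\emptyset)$, two \emph{subcontinua} $C_0,C_1$ of $f^M(C_\emptyset)$ that also lie in $\mathcal{F}^u$, i.e.\ to verify condition (2) of Definition \ref{splitinfu} and thereby answer Question \ref{3} affirmatively under the stated hypotheses. But the two hypotheses do not obviously give this, and the paper explicitly leaves Question \ref{3} open even under them. Your construction of $B_0=A^1\cup\cdots\cup A^r$ and $B_1=A^{r+1}\cup\cdots\cup A^k$ via a near-optimal chain does put $B_0,B_1\in\mathcal{F}^u$ (closure under connected unions), but they are subsets of $C_\emptyset$, not of $f^M(C_\emptyset)$. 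The natural candidates $C_j=f^M(B_j)$ are indeed subcontinua of $f^M(C_\emptyset)$, yet nothing guarantees $f^M(B_j)\in\mathcal{F}^u$: the invariance hypothesis you have is under $f^{-1}$, not under $f$, so membership in $\mathcal{F}^u$ is not preserved by forward iteration. No mechanism in your sketch bridges this.

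The paper's actual proof of this theorem sidesteps Question \ref{3} altogether. It does \emph{not} require $C_{i_1\cdots i_n}\subset f^M(C_{i_1\cdots i_{n-1}})$; it only requires nonempty intersection $C_{i_1\cdots i_n}\cap f^M(C_{i_1\cdots i_{n-1}})\neq\emptyset$, with each $C_{i_1\cdots i_n}$ obtained directly from Theorem \ref{teoremacontinuosinst} through a point of $f^M(C_{i_1\cdots i_{n-1}})$. The price for dropping containment is that the pulled-back pieces can ``drift,'' and this is controlled by a new condition (4): the diameter of $\bigcup_{j=0}^{n-k}f^{-jM}(C_{i_1\cdots i_{k+j}})$ stays below $\delta/3$. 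Establishing (4) is exactly where both extra hypotheses enter: invariance under $f^{-1}$ and closure under connected unions guarantee that $\bigcup_{j=1}^{n-k}f^{-jM}(C_{i_1\cdots i_{k+j}})\in\mathcal{F}^u$, so the triangle inequality, hyperbolicity, and compatibility of the ft-metric $D$ from Theorem \ref{ft-metric} can be applied to bound its diameter by a geometric series. The $(nM,\delta/3)$-separated sets then come from the anchor points $x_{i_1\cdots i_{k-1}0},x_{i_1\cdots i_{k-1}1}$ being $\geq\delta$ apart while each orbit point stays within $\delta/3$ of its anchor. In short, where you try to force a containment that the hypotheses do not yield, the paper replaces containment with intersection and pays for it with a metric estimate; that substitution is the idea your proposal is missing.
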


\begin{proof}
We will prove that there exists $M\in\N$, $\delta>0$, and $C\in\mathcal{F}^u$ such that $\diam(f^M(C))\geq\delta$, and for each $n\in\N$ and $(i_1,\dots,i_n)\in\{0,1\}^n$, there exists $C_{i_1i_2\cdots i_n}\in\mathcal{F}^u$ satisfying: 
\begin{enumerate}
    \item $\diam (f^M(C_{i_1i_2\cdots i_n}))\geq\delta$;
    \item \begin{enumerate}
        \item $C_0\cap f^M(C)\neq \emptyset$ and $C_1\cap f^M(C)\neq \emptyset$;
        \item $C_{i_1i_2\cdots i_{n-1}i_n}\cap f^M(C_{i_1i_2\cdots i_{n-1}})\neq \emptyset$;
    \end{enumerate} 
    \item $d_H(C_{i_1i_2\cdots i_{n-1}0},C_{i_1i_2\cdots i_{n-1},1})\geq{\delta}/{3}$;
    \item for each $k\in\mathbb{N}$, $n\geq k$ and $(i_1,i_2,\ldots, i_n)\in \{0,1\}^n$, \[\diam\left(\bigcup_{j=0}^{n-k} f^{-jM}(C_{i_1\cdots i_{k+j}})\right)<\frac{\delta}{3}.\] 
\end{enumerate} We first prove the existence of this family of continua $(C_{i_1i_2\cdots i_n})_{i_1i_2\cdots i_n,n}$ and after we prove it is enough to prove positive topological entropy. In Theorem \ref{teoCwHyp} we proved the existence of a ft-metric $D:E^u\rightarrow \mathbb{R}$ with a hyperbolic constant $\lambda\in(0,1)$. Consider $\delta>0$, given by Corollary \ref{continuonaodecresce}, satisfying: if $C\in\mathcal{F}^u_{\gamma}$, then $$\diam(f^{n}(C))\geq\delta \,\,\,\,\,\, \text{for every}  \,\,\,\,\,\, n\geq 2m_\gamma.$$ 
The compatibility between $\diam$ and $D$ ensures the existence of $\alpha\in(0,\delta)$ such that 
$$D(C)<\alpha \,\,\,\,\,\, \text{implies} \,\,\,\,\,\, \diam(C)<\frac{\delta}{6}.$$
Consider $M\in\mathbb{N}$ such that $M\geq 2m_{\delta/6}$ and
\[\frac{4\lambda^{M}}{1-\lambda^{M}}<\alpha,\] and choose any $C\in\mathcal{F}^u_{\delta/6}$. 
Corollary \ref{continuonaodecresce} ensures that
\[\diam\;f^k(C)\geq\delta\;\;\;\;\mbox{for every} \;\;\;\; k\geq 2m_{\delta/6}.\] Since $M\geq2m_{\delta/6}$, then $\diam\;f^M(C)\geq\delta$. Thus, we can choose $x_0$ and $x_1$ in $f^M(C)$ with $d(x_0,x_1)\geq\delta$. 
Theorem \ref{teoremacontinuosinst} ensures the existence of $C_0, C_1 \in \mathcal{F}^u_{\delta/6}$ with $x_0\in C_0$ and $x_1\in C_1$. Thus, 
$$\diam(C_i)\leq\frac{\delta}{6} \,\,\,\,\,\, \text{and} \,\,\,\,\,\, x_i\in C_i\cap f^M(C)$$ 
for each $i\in\{0,1\}$, so $d_H(C_0,C_1)\geq \delta/3$
(proving items (2) (a), (3) and (4) for $C, C_0$ and $C_1$). Also, Corollary \ref{continuonaodecresce} ensures that
$$\diam(f^M(C_0))\geq\delta \,\,\,\,\,\, \text{and} \,\,\,\,\,\, \diam(f^M(C_1))\geq\delta,$$ since $M\geq2m_{\delta/6}$, implying item (1) for $C_0$ and $C_1$.
Now, for each $i\in\{0,1\}$, consider $x_{i,0},x_{i1}\in f^M(C_{i})$ such that $d(x_{i0},x_{i1})\geq\delta$ and
$C_{i0},C_{i1}\in\mathcal{F}_{\delta/6}^u$ with 
$$x_{i,0}\in C_{i,0} \,\,\,\,\,\, \text{and} \,\,\,\,\,\, x_{i,1}\in C_{i,1}.$$ 
Thus,
\[d_H(C_{i0},C_{i1})\geq \delta/3\;\;\;\mbox{ for each }\;\;\; i\in\{0,1\}\] and \[\diam\;f^M(C_{ij})\geq \delta\;\;\;\mbox{ for each }\;\;\;(i,j)\in\{0,1\}^2.\] Moreover, the hyperbolicity of $D$ ensures that
\[D(f^{-M}(C_{ij}))\leq 4\lambda^MD(C_{ij})<\alpha \;\;\; \mbox{ for every } \;\;\;(i,j)\in\{0,1\}^2,\] which implies that
\[\diam(f^{-M}(C_{ij}))\leq \frac{\delta}{6}\;\;\; \mbox{ for every } \;\;\;(i,j)\in\{0,1\}^2.\] Thus, for each $(i,j)\in\{0,1\}^2$,
\[\diam(C_i\cup f^{-M}(C_{ij}))\leq \diam(C_i)+\diam(f^{-M}(C_{ij}))<\frac{\delta}{3}.\] Figure \ref{figura:bolae} illustrates these choices and estimates.
\begin{figure}[h]
	\centering 
	\includegraphics[width=6cm]{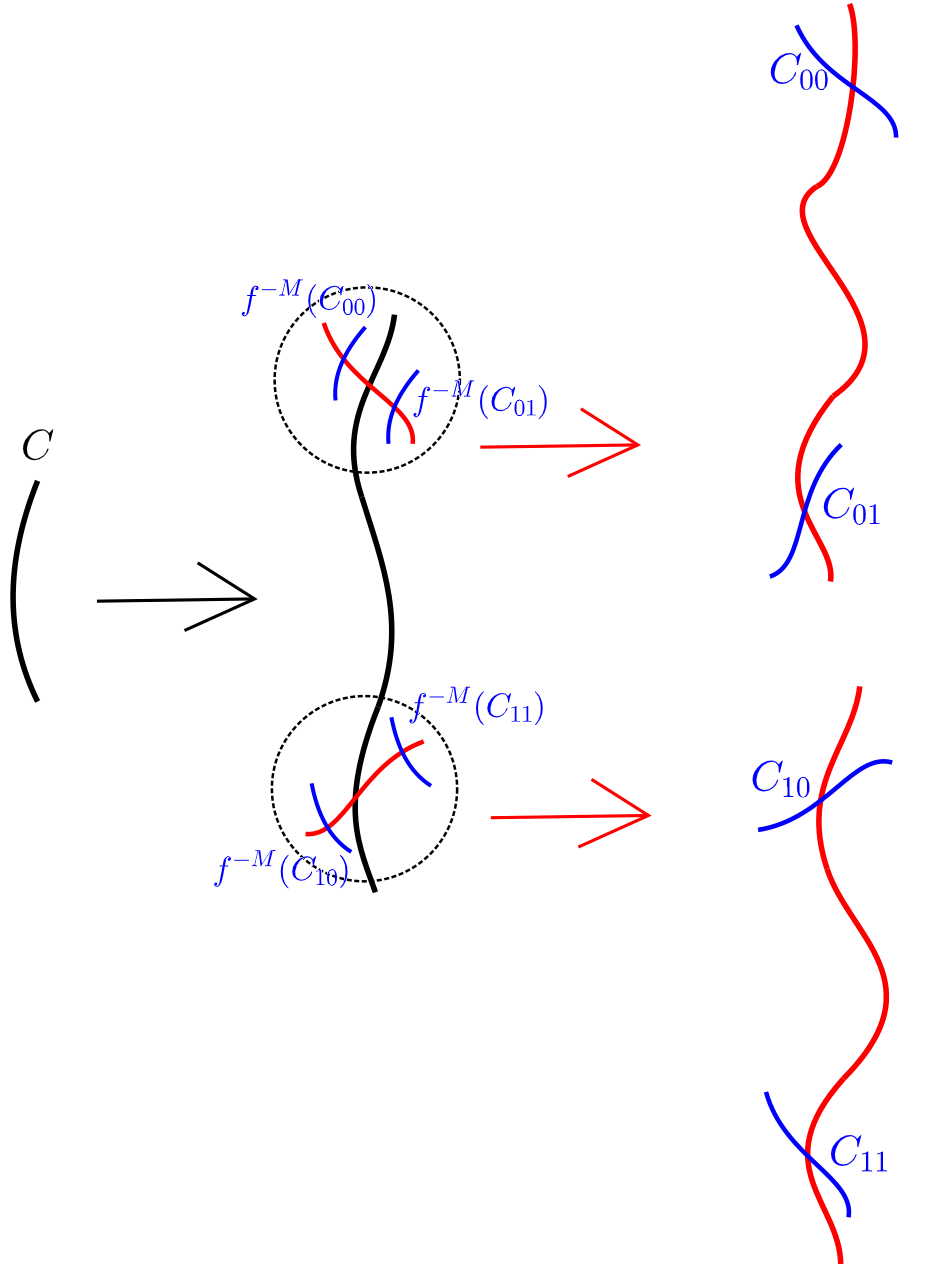} 
	\caption{Local cw-unstable continua $\delta/3$ distant with past iterates exponentially small.}
 \label{figura:bolae}
\end{figure}
Following the same steps inductively, for each $n\geq2$ and $(i_1i_2\cdots i_{n-1})\in\{0,1\}^{n-1}$ we create continua, $C_{i_1i_2\cdots i_{n-1}, 0}$ and $C_{i_1i_2\cdots i_{n-1}, 1}$ in $\mathcal{F}_{\delta/6}^u$, with  $$C_{i_1i_2\cdots i_{n-1},0}\cap f^M(C_{i_1i_2\cdots i_{n-1}})\neq \emptyset \,\,\,\,\,\, \text{and} \,\,\,\,\,\, C_{i_1i_2\cdots i_{n-1},1}\cap f^M(C_{i_1i_2\cdots i_{n-1}})\neq \emptyset$$ and \[d_H(C_{i_1i_2\cdots i_{n-1} 0},C_{i_1i_2\cdots i_{n-1} 1})\geq \delta/3.\] Since $C_{i_1i_2\cdots i_{n-1}  i_n}\in\mathcal{F}^u$, then, by Corollary \ref{continuonaodecresce},
\[\diam\; f^M(C_{i_1i_2\cdots i_{n-1} i_n})\geq \delta.\] 
The properties of $D$ (triangular inequality and hyperbolicity on $\mathcal{F}^u$) ensure that for each $k\in\mathbb{N}$, $n\geq k$ and $(i_1,i_2,\cdots,i_n)\in \{0,1\}^n$ we have
\[\begin{array}{rcl}
\displaystyle D\left(\bigcup_{j=1}^{n-k}f^{-jM}(C_{i_1\cdots i_{k+j}})\right) & \leq & \displaystyle \sum_{j=1}^{n-k} D(f^{-jM}(C_{i_1\cdots i_{k+j}}))\\
& \leq & \displaystyle\sum_{j=1}^{n-k} 4\lambda^{-jM}\\
& \leq & 4\left(\dfrac{\lambda^{M}}{1-\lambda^M}\right)\\
&<&\alpha,
\end{array}\] 
Here we just write $f^{-jM}(C_{i_1\cdots i_{k+j}})$ omitting the marked points, which are 
\[f^{-(j-1)M}(x_{i_1,\cdots, i_{k+j-1}})\;\;\;\mbox{ and }\;\;\; f^{-(j+1)M}(x_{i_1,\cdots, i_{k+j+1}})\]
where $x_{i_1,\cdots, i_{l}}$ is a point of the intersection $C_{i_1\cdots i_l}\cap f^M(C_{i_1\cdots i_{l-1}})$ for each $l\in\mathbb{N}$.
Since, by hypothesis, $$\bigcup_{j=1}^{n-k}f^{-jM}(C_{i_1\cdots i_{k+j}})\in\mathcal{F}^u,$$ the compatibility between $D$ and $\diam$ ensures that $$\displaystyle\diam\left(\bigcup_{j=1}^{n-k}f^{-jM}(C_{i_1\cdots i_{k+j}})\right)<\dfrac{\delta}{6}$$ and, therefore, 
\[\begin{array}{rcl}
\displaystyle\diam\left(\bigcup_{j=0}^{n-k}f^{-jM}(C_{i_1\cdots i_{k+j}})\right)&\leq& \displaystyle\diam(C_{i_1\cdots i_{k}})+\diam\left(\bigcup_{j=1}^{n-k}f^{-jM}(C_{i_1\cdots i_{k+j}})\right)\\
&<&\displaystyle \dfrac{\delta}{6}+\dfrac{\delta}{6}\; = \dfrac{\delta}{3}.\;\\
\end{array} \]
This proves the existence of the family $(C_{i_1\dots i_n})_{i_1,\dots,i_n,n}$ satisfying (1) to (4). To prove that this implies positive topological entropy, we use (2) and choose points
\[ x_i\in C_i\cap f^M(C) \;\;\;\mbox{ for each } \;\;\;i\in\{0,1\},\] and for each $n\geq 2$ and $(i_1,i_2,\ldots, i_n)\in\{0,1\}^n$, choose
    \[x_{i_1i_2\cdots i_n}\in C_{i_1i_2\cdots i_{n}}\cap f^M(C_{i_1i_2\cdots i_{n-1}}).\]
We will prove that, for each $n\in\mathbb{N}$, the set 
\[A_n=\{y_{i_1i_2\cdots i_{n}}=f^{-nM}(x_{i_1i_2\cdots i_{n}});\; (i_1,i_2,\ldots,i_n)\in\{0,1\}^n\}\]
is $(nM,\delta/3)-$separated. Indeed, if $y
_{i_1\cdots i_n},y_{j_1\cdots j_n}\in A_n$ are distinct, then there exists $k\in\{1,2,\ldots, n\}$ such that $j_k\neq i_k$ and
\[j_l=i_l\;\;\;\mbox{ for every }\;\;\;l\in\{1,\ldots, k-1\}.\] Assume, without loss of generality, that $i_k=0$ and $j_k=1$. Since \[y_{i_1i_2\cdots i_n}=f^{-nM}(x_{i_1i_2\cdots i_n})\in f^{-nM}(C_{i_1i_2\cdots i_n})\] and \[y_{j_1j_2\cdots j_n}=f^{-nM}(x_{j_1j_2\cdots j_n})\in f^{-nM}(C_{j_1j_2\cdots j_n}),\] we have that
\begin{equation*}\label{eqentropia1}
    f^{kM}(y_{i_1\cdots i_n})\in f^{(-n+k)M}(C_{i_1i_2\cdots i_n}) \subset \bigcup_{j=0}^{n-k} f^{-jM}(C_{i_1i_2\cdots i_{k+j}})
    \end{equation*}
    and
    \begin{equation*}\label{eqentropia3}f^{kM}(y_{j_1\cdots j_n})\in f^{(-n+k)M}(C_{j_1j_2\cdots j_n}) \subset \bigcup_{j=0}^{n-k} f^{-jM}(C_{j_1j_2\cdots j_{k+j}})
\end{equation*} 
Item (4) ensures that
\[f^{kM}(y_{i_1\cdots i_n})\in B(x_{i_1\cdots i_{k-1}0},\delta/3) \;\; \mbox{ and }\;\;f^{kM}(y_{j_1\cdots j_n})\in B(x_{i_1\cdots i_{k-1}1},\delta/3),\]
and this implies that \[d(f^{kM}(y_{i_1\cdots i_n}), f^{kM}(y_{j_1\cdots j_n}))\geq \delta/3,\] since $d(x_{i_1\cdots i_{k-1}0},x_{i_1\cdots i_{k-1}1})\geq \delta$ by item (3) (recall that $x_{i_1\cdots i_{k-1}0},x_{i_1\cdots i_{k-1}1}\in C_{i_1\cdots i_{k-1}i_n}$). Since for each $n\in\mathbb{N}$, $A_n$ has $2^n$ elements and is $(nM,\delta/3)$-separated, it follows that
 \[s(nM,\delta/3)\geq 2^n\;\;\;\mbox{ for every }\;\;\;n\in\mathbb{N}.\]
Thus, 
\[\begin{array}{rcl}
h(f,\delta/4) & = &\limsup_{n\rightarrow\infty}\frac{1}{n}\cdot\log s(n,\delta/4)\\
\\
&\geq&\limsup_{n\rightarrow\infty}\left(\frac{1}{nM}\cdot\log s(nM,\delta/4)\right)\\
\\
&\geq &\limsup_{n\rightarrow\infty} \frac{1}{nM}\cdot\log 2^n\\
\\
&\geq &\limsup_{n\rightarrow\infty} \frac{n}{nM}\cdot\log 2\\
\\
&=&\frac{1}{M}\cdot \log 2\;>\;0
\end{array}\]
and, hence, $h(f)>0$. 
\end{proof}

\begin{question}
Are the hypotheses on $\mathcal{F}^u$ of being invariant by $f^{-1}$ and closed by connected unions satisfied by all first-time sensitive homeomorphisms?
\end{question}





\section*{Acknowledgements}
Bernardo Carvalho was supported by Progetto di Eccellenza MatMod@TOV grant number PRIN 2017S35EHN, by CNPq grant number 405916/2018-3 and Mayara Antunes was also supported by Fapemig grant number APQ-00036-22.

\vspace{1.5cm}
\noindent

{\em B. Carvalho}
\vspace{0.2cm}

\noindent

Dipartimento di Matematica,

Università degli Studi di Roma Tor Vergata

Via Cracovia n.50 - 00133

Roma - RM, Italy

\email{mldbnr01@uniroma2.it}

\vspace{1.0cm}
{\em M. B. Antunes}
\vspace{0.2cm}

\noindent

Escola de Engenharia Indústrial e Metalúrgica de Volta Redonda,

Universidade Federal Fluminense - UFF

Avenida dos Trabalhadores, 420, Vila Santa Cecília

Volta Redonda - RJ, Brasil.


\email{mayaraantunes@id.uff.br}

\end{document}